\newcommand{\Assouad}{\dim_{\mathrm{A}}}
\newcommand{\Lower}{\dim_{\mathrm{L}}}
\newcommand{\Haus}{\dim_{\mathrm{H}}}
\newcommand{\Boxd}{\dim_{\mathrm{B}}}
\newcommand{\uBoxd}{\overline{\dim}_{\mathrm{B}}}
\newcommand{\lBoxd}{\underline{\dim}_{\mathrm{B}}}
\newcommand{\Ld}{\dim_{\mathrm{L}}\,}
\newcommand{\supp}{\mathrm{supp}}
\newtheorem*{thm*}{Theorem}
\newtheorem*{conj*}{Conjecture}
\newtheorem{thm}{Theorem}[section]
\newtheorem{lma}[thm]{Lemma}
\newtheorem{cor}[thm]{Corollary}
\newtheorem{defn}[thm]{Definition}
\newtheorem{prop}[thm]{Proposition}
\newtheorem{conj}[thm]{Conjecture}
\newtheorem{rem}[thm]{Remark}
\begin{document}

\title{Dimension growth for iterated sumsets}

\author{Jonathan M. Fraser}
\address{Jonathan M. Fraser\\
School of Mathematics \& Statistics\\University of St Andrews\\ St Andrews\\ KY16 9SS\\ UK }
\curraddr{}
\email{jmf32@st-andrews.ac.uk}
\thanks{JMF was financially supported by a \emph{Leverhulme Trust Research Fellowship} (RF-2016-500) and an \emph{ EPSRC Standard Grant} (EP/R015104/1).}

\author{Douglas  C. Howroyd}
\address{Douglas C. Howroyd\\
	School of Mathematics \& Statistics\\University of St Andrews\\ St Andrews\\ KY16 9SS\\ UK }
\curraddr{}
\email{dch8@st-andrews.ac.uk}
\thanks{DCH was financially supported by the EPSRC Doctoral Training Grant (EP/N509759/1).}

\author{Han Yu}
\address{Han Yu\\
	School of Mathematics \& Statistics\\University of St Andrews\\ St Andrews\\ KY16 9SS\\ UK  }
\curraddr{}
\email{hy25@st-andrews.ac.uk}
\thanks{HY was financially supported by the University of St Andrews.  }

\subjclass[2010]{Primary: 28A80, secondary: 11B13.}

\keywords{sumset, Assouad dimension, box dimension, Hausdorff dimension, distance set.}

\date{}

\dedicatory{}

\begin{abstract}
We study dimensions of sumsets and iterated sumsets and provide natural conditions which guarantee that a set $F \subseteq \mathbb{R}$ satisfies $\uBoxd F+F > \uBoxd F$ or even $\Haus n F  \to 1$. Our results apply to, for example, all uniformly perfect sets, which include Ahlfors-David regular sets.  Our proofs rely on Hochman's  inverse theorem for entropy and the Assouad and lower dimensions play a critical role.  We give several applications of our results including an Erd\H{o}s-Volkmann  type theorem for semigroups and new lower bounds for the box dimensions of distance sets for sets with small dimension.
\end{abstract}

\maketitle
\section{Introduction}\label{intro}

Studying the behaviour of sets under addition and multiplication with themselves has been of interest for many years, providing a multitude of fascinating results. Given $F \subseteq \mathbb{R}^d$, we are interested in relating the `size' of the \emph{sumset} $F+F = \{ a_1+a_2 \, \vert \, a_1,a_2 \in F\}$ and \emph{iterated sumsets}
\[
nF= F+F+\cdots+F= \left\{ a_1+a_2+\cdots+a_n \, \vert \,  a_i\in F, \forall \, i \in \left\{ 1,2,\ldots, n \right\}\right\} \qquad (n \geq 2)
\]
with the `size' of $F$.  When $F$ is finite, one interprets `size' as cardinality and the question falls under additive combinatorics, see \cite{TV} for an extensive introduction. We will also be interested in inhomogeneous sumsets $F+G = \{ a_1+a_2 \, \vert \, a_1 \in F, a_2 \in G\}$ and inhomogeneous iterated sumsets $F_1+F_2+\cdots+F_n= \left\{ a_1+a_2+\cdots+a_n \, \vert \,  a_i\in F_i, \forall \, i \in \left\{ 1,2,\ldots, n \right\}\right\}$.

If $F$ is infinite, then `size' can be interpreted as `dimension', and many natural questions arise.  For $F \subset \mathbb{R}$ one might na\"ively expect that `generically' $\dim nF = \min\{1, n \dim F\}$ or that at least $\dim nF \to 1$ as $n \to \infty$, provided $\dim F >0$, or that $\dim F+F > \dim F$, provided $\dim F \in (0,1)$.  However, these na\"ive expectations  certainly do not hold in general.    K\H{o}rner \cite{Kor} and Schmeling-Shmerkin \cite{SS} proved that for any increasing sequence $\{\alpha_n\}_{n=1}^\infty$ with $0\le \alpha_n \le 1$ for all $n$, there is a set $E\subset \mathbb{R}$ such that $\Haus nE = \alpha_n$ for all $n\ge 1$. This set can also be made to have specific upper and lower box dimensions $\left\{\beta_n\right\}$ and $\left\{\gamma_n \right\}$ given certain technical restrictions on these sequences. Schmeling and Shmerkin construct explicit sets with these properties. The main purpose of this paper is to identify natural conditions on $F$ which guarantee that the sumsets behave according to the  na\"ive expectations described above.

A related problem is the Erd\H{o}s-Volkmann ring conjecture which states that any Borel subring of $\mathbb{R}$ must have Hausdorff dimension either 0 or 1. This was solved by Edgar and Miller \cite{edgmill} where they not only showed that   a Borel subring   $F$ of $\mathbb{R}$ must have Hausdorff dimension either 0 or 1, but also if $\Haus F = 1$ then $F=\mathbb{R}$. Edgar and Miller also showed that any Borel subring   $F\subseteq \mathbb{C}$ has Hausdorff dimension 0, 1 or 2. On a related note, Erd\H{o}s and Volkmann \cite{erdvolk} proved that for every $0\le s \le 1$, there is an additive Borel subgroup   $G(s) \le \mathbb{R}$ such that $\Haus G(s) = s$. Therefore the fact that rings have both an additive and multiplicative structure is essential in obtaining the dimension dichotomy.

One can also consider specific classes of sets and hope to get stronger results concerning their sumsets.   Indeed, one of the main inspirations for this work was a result of Lindenstrauss, Meiri and Peres \cite{LMP}, which implies that for compact $\times p $ invariant subsets $F$ of the circle with $\Haus F > 0$, one has $\Haus n F \to 1$. This follows from a stronger result which states that if $\{E_i\}$ is a sequence of compact $\times p $ invariant  sets which satisfy
\[
\sum_i \frac{\Haus E_i}{|\log \Haus E_i |} = \infty,
\]
then $\Haus (E_1 + \cdots + E_n) \to 1$.  See our Corollary \ref{increasingcor} for a result related to this.

Recent work by Hochman \cite{Hoch, Hoch2, Hoch3} has used  ideas from additive combinatorics and entropy to make important contributions to the dimension theory of self-similar sets, in particular the overlaps conjecture, see \cite{PSol}. The techniques in our proofs will use some of the ideas developed by Hochman which will be summarised in Section \ref{hochman}.

In this paper we will consider several different dimensions, namely  the Hausdorff,  box,  Assouad  and  lower dimensions. We define all of these dimensions here except for the Hausdorff dimension, since we will not use this definition directly. For a definition of Hausdorff dimension and more information on the box dimension one can check \cite{Fa}. For any bounded set $E\subset \mathbb{R}^d$, we define $N(E,r)$ to be the smallest number of dyadic cubes of side lengths $r > 0$ needed to cover $E$. The \textit{upper box dimension} of a set $F \subset \mathbb{R}^d$ is defined to be 
\[
\uBoxd F = \limsup_{r \rightarrow 0} \frac{\log N(F,r)}{-\log r }
\]
and the lower box dimension $\lBoxd F$ is found by taking the liminf. When these limits coincide we simply talk about the box dimension $\Boxd F$.  The definitions of the box dimensions described above only apply for bounded sets, since for unbounded sets the covering number is always infinite. However, we modify the definition for convenience as follows. The upper and lower box dimensions of an unbounded set $F\subset\mathbb{R}$ are defined to be
	\[
	\uBoxd F=\sup_{K\subset F : K \text{ bounded }} \uBoxd K
	\]
and
	\[
	\lBoxd F=\sup_{K\subset F :  K \text{ bounded }} \lBoxd K.
	\]
	This definition also applies to bounded sets as well and in this case it clearly coincides with the usual definition.

For any set $F\subseteq \mathbb{R}^d$, the \textit{Assouad dimension} of $F$ is 
\begin{align*}
\Assouad F = \inf \Bigg\{ s \ge 0 \, \, \colon \, (\exists \, C >0)\, (\forall & R>0)\,  (\forall r \in (0,R))\, (\forall x \in F) \\ 
&N(B(x,R) \cap F,r) \le C \left( \frac{R}{r}\right)^s \Bigg\}
\end{align*}
where $B(x,R)$ denotes the closed ball of centre $x$ and radius $R$. Similarly the \textit{lower dimension} is 
\begin{align*}
\Lower F = \sup \Bigg\{ s \ge 0 \, \, \colon \, (\exists \, C >0)\, (\forall & R \in (0, \text{diam}(F))\,  (\forall r \in (0,R))\, (\forall x \in F) \\ 
&N(B(x,R) \cap F,r) \ge C \left( \frac{R}{r}\right)^s \Bigg\}
\end{align*}
where $\text{diam}( \cdot)$ denotes the diameter of a set. In order to force the lower dimension to be monotone, one often considers the \emph{modified lower dimension} $\dim_{\textrm{ML}} F = \sup \{ \Lower E  :  E \subseteq F\}$.  We omit further discussion of this but point out that throughout this paper one may replace lower dimension by modified lower dimension simply by working with subsets.  For further details concerning the Assouad and lower dimensions, we suggest \cite{F, Lu, R} for a general introduction. Roughly speaking Assouad dimension  provides information on how `locally dense' the set can be whilst the lower dimension tells us how `locally sparse' it can be. One of the main themes of this paper is that these notions turn out to be critical in the study of sumsets. 
 It is useful to keep in mind that for any set $F$
\[
\Haus F \leq \lBoxd F \leq \uBoxd F  \leq  \Assouad F \qquad \text{and} \qquad \Lower F \leq \lBoxd F
\]
and if $F$ is closed, then one also has $ \Lower F \leq \Haus F$.

\section{Results}\label{results}

\subsection{Dimension growth for sumsets and  iterated sumsets}

We first derive general conditions which force the dimensions of the sumset to strictly exceed the dimensions of the original set.    It follows from recent work of Dyatlov and Zahl (private communication, see also \cite{Dy}) that if $F\subset \mathbb{R}$ is Ahlfors-David regular with dimension strictly between 0 and 1, then $\uBoxd F < \uBoxd 2F$ (this is even true for lower box dimension).   This result can be interpreted as `regularity implies dimension growth'. If a set is Ahlfors-David regular, then the lower, Hausdorff, box and Assouad dimensions all coincide and, as such, our results below apply to a much larger class of sets where Ahlfors-David regularity is weakened to only requiring that either the lower dimension is strictly  positive or the Assouad dimension is strictly less than 1.  This is natural since, for example,  sets with Assouad dimension strictly less than 1 are precisely the sets which  uniformly avoid arithmetic progressions \cite{FY, kota}, and arithmetic progressions tend to cause the sumset to be small.

\begin{thm}\label{main1asym}
Let $F_1,F_2 \subset \mathbb{R}$ with $\uBoxd F_1, \lBoxd F_2  \in (0,1)$.  If either $\Assouad F_1 < 1$ or $ \Lower F_2 >0$, then
\[
  \uBoxd F_1 < \uBoxd (F_1+F_2).
\]
\end{thm}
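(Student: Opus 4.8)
The plan is to argue by contradiction, using Hochman's inverse theorem for entropy at a fine dyadic scale $2^{-m}$. Suppose $\uBoxd(F_1+F_2) = \uBoxd F_1 =: s \in (0,1)$. Fix $\varepsilon > 0$ small. Since $s = \uBoxd F_1$, there is a sequence of scales $2^{-m}$ along which $N(F_1, 2^{-m}) \gtrsim 2^{m(s-\varepsilon)}$; restricting to a bounded piece and passing to a subsequence, we may fix such an $m$ and find a set $A \subset F_1$ of $\sim 2^{m(s-\varepsilon)}$ points that are $2^{-m}$-separated, and similarly (using $\lBoxd F_2 > 0$, which gives a lower bound on covering numbers at \emph{every} small scale) a set $B \subset F_2$ of $\sim 2^{m\delta}$ points, $2^{-m}$-separated, where $\delta = \lBoxd F_2 > 0$. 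Normalise $A, B$ into $[0,1)$ and regard the normalised counting measures $\mu = \mathrm{unif}(A)$, $\nu = \mathrm{unif}(B)$ as measures on $2^{-m}\mathbb{Z} \cap [0,1)$, so that $H_m(\mu) \approx (s-\varepsilon)\log 2$ (per scale) and $H_m(\nu) \approx \delta \log 2$ (normalising entropy by $m$). On the other hand $A+B \subset F_1+F_2$ is covered by $N(F_1+F_2, 2^{-m}) \lesssim 2^{m(s+\varepsilon)}$ dyadic intervals, so $H_m(\mu * \nu) \le (s+\varepsilon)\log 2 + o(1)$ per scale. Thus the convolution increases normalised entropy by at most $2\varepsilon\log 2 + o(1)$, i.e. $\mu$ is an almost-entropy-minimiser for convolution with $\nu$.

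Hochman's inverse theorem (to be recalled in Section~\ref{hochman}) then says that, apart from a set of scales $i \in \{1,\dots,m\}$ of density $O(\varepsilon)$ (density $\to 0$ as $\varepsilon \to 0$), at each remaining scale $i$ the measure $\mu$ is (componentwise, after conditioning on a level-$i$ dyadic cell) close to uniform on a dyadic subcell — and, crucially for us, at \emph{those same scales} $\nu$ must be nearly atomic, i.e. concentrated on essentially a single level-$(i+1)$ subcell of each level-$i$ cell. The first step of the real argument is to extract this structural dichotomy cleanly: the scales split into a "saturated" set $S_A$ (where $\mu$ looks uniform) and its complement, and on the complement $\nu$ is atomic; we will want $|S_A|$ to be a definite positive proportion of $m$, which follows because $H_m(\mu)/m$ is bounded below by $s - \varepsilon > 0$ while atomic scales contribute essentially nothing to the entropy of $\nu$ — wait, this needs care, since it is the entropy of $\mu$ not $\nu$ that is bounded below; the correct statement is that $\mu$ has a positive proportion of "full-branching" scales because $\Boxd$ of its support is $\ge s - \varepsilon$, hence a positive proportion of scales are saturated for $\mu$.

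Now invoke the hypothesis. In Case 1, $\Assouad F_1 < 1$: pick $t$ with $\Assouad F_1 < t < 1$. The Assouad bound says $N(B(x,R)\cap F_1, r) \le C(R/r)^t$ for all $x, R, r$. Apply this at $R = 2^{-i}$, $r = 2^{-(i+1)}$ (or over a block of scales) at a saturated scale $i \in S_A$: saturation forces $\mu$-measure (hence, since $\mu$ is uniform on $A$, a proportional count of $A$) to fill a full dyadic subdivision, giving $\sim 2$ (or $2^{\text{block length}}$) children — but this is automatically consistent, so the contradiction must come from aggregating over many saturated scales: the Assouad condition caps the \emph{total} multiplicative growth of $N(B(x,2^{-j})\cap F_1, 2^{-m})$ across scales $j$ by $(2^{m-j})^t = 2^{(m-j)t}$, whereas full branching at every scale in $S_A$ (a $\rho$-proportion, $\rho > 0$) forces growth at least $2^{|S_A|} \ge 2^{\rho m/\,(\text{stuff})}$ localised inside some ball — and once $\varepsilon$ is small enough relative to $1 - t$ this contradicts $t < 1$. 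In Case 2, $\Lower F_2 > 0$: pick $0 < u < \Lower F_2$, so $N(B(x,R)\cap F_2, r) \ge c(R/r)^u$ for all relevant $x,R,r$; but at every scale $i$ in the complement of $S_A$ — and we showed $\mu$'s saturated scales leave the density of $\nu$-relevant scales to be handled — the inverse theorem forces $\nu$ to be essentially atomic at scale $i$, i.e. $N$ of $B(x, 2^{-i})\cap(\text{supp}\,\nu)$ at scale $2^{-(i+1)}$ is $\approx 1$, contradicting the lower-dimension lower bound of $2^u > 1$ as soon as this happens at even one such scale (and it happens at a positive proportion of scales since $S_A$ cannot be almost all scales: if it were, $\mu * \nu$ would have entropy $\approx H_m(\mu) + H_m(\nu) \ge (s - \varepsilon + \delta)\log 2$ per scale, exceeding $(s+\varepsilon)\log 2$, a contradiction).

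The main obstacle I expect is the bookkeeping that links the three different notions — normalised entropy across a block of $m$ dyadic scales, the covering-number growth controlled by Assouad/lower dimension across the \emph{same} block, and Hochman's component-wise structure theorem — with uniform quantitative control so that the single parameter $\varepsilon$ can be sent to $0$ at the end. In particular, one must be careful that "saturated for $\mu$" and "atomic for $\nu$" really occur at a common positive-density set of scales (this is exactly the content of the inverse theorem's conclusion that the two measures have complementary branching behaviour along an entropy-minimising sequence), and that the Assouad/lower dimension hypotheses, which are genuinely \emph{uniform over all balls and all scale-ratios}, can be fed a localisation coming from a single dyadic ancestor cell rather than an arbitrary metric ball — this is routine but is where the "dyadic vs. metric" and "single scale vs. block of scales" translations have to be done honestly. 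A secondary subtlety is handling $\lBoxd F_2 > 0$ (rather than $\uBoxd$): we need $B$ to be large at the \emph{same} scale $m$ dictated by $F_1$'s box-dimension sequence, which is exactly why the hypothesis on $F_2$ is phrased with the lower box dimension.
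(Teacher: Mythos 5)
Your overall framework is the same as the paper's (discretise at observing scales of $F_1$, take counting measures, compare entropies, apply Theorem \ref{HochmanEntropy}), but both branches of your case analysis have genuine gaps. A preliminary issue is quantifier order: the inverse theorem needs the entropy defect to be below its own $\delta(\varepsilon,m)$, which is typically far smaller than your defect $2\varepsilon\log 2$; this is fixable by fixing $\varepsilon,m$, obtaining $\delta$, and only then choosing the scale so that $N(F_1+F_2,2^{-n})\le N(F_1,2^{-n})^{1+\delta}$, as the paper does. More seriously, in Case 1 your argument rests on the claim that $H_m(\mu)\ge(s-\varepsilon)m$ forces a positive proportion of ``saturated'' scales; this is false (entropy $\approx s\log 2$ per level is perfectly compatible with no component ever being $(\varepsilon,m)$-uniform and none being atomic --- the theorem then simply puts you in the entropy-growth alternative), and your aggregation step would anyway need the density $\rho$ of saturated scales to exceed the Assouad exponent $t$, since full branching on a $\rho$-proportion of levels only yields growth $2^{(1-\varepsilon)\rho m}$ against the cap $C2^{tm}$; positivity of $\rho$ is not enough. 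The correct mechanism, which you explicitly dismiss as ``automatically consistent'', is local and needs no aggregation: a single scale $k\in I$ gives a dyadic cell $D$ with $N(D\cap F_1,2^{-k-m})\ge 2^{(1-\varepsilon)m}$, and since the depth $m=m(\varepsilon)\to\infty$ as $\varepsilon\to 0$, this already violates $N(B(x,R)\cap F_1,r)\le C(R/r)^t$ once $\varepsilon<1-t$ and $m$ is large. In the paper it is then the hypothesis $\lBoxd F_2>0$ (not positivity of $H(\mu)$) that rules out the remaining possibility that $J$ swallows almost all scales, via Lemma \ref{EntropyToCovering}: $\#J\ge(1-\varepsilon)n_i$ would give $N(F_2,2^{-n_i})\le 2^{5\varepsilon n_i}$ and hence $\lBoxd F_2\le 5\varepsilon$.

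Case 2 fails at its core step. Being $(\varepsilon,m)$-atomic bounds the entropy of a component, not its support: a counting measure can put almost all of its points in one subcell and a single point in each of $2^m$ others, so atomicity of $\nu^{x,k}$ gives no upper bound on $N(B(x,2^{-k})\cap\supp\nu,2^{-k-m})$ (the paper warns about exactly this after Lemma \ref{EntropyHigh}). Even if it did, your $\nu$ is supported on a $2^{-m}$-separated subset $B\subset F_2$, and lower dimension is not monotone: the bound $N(B(x,R)\cap F_2,r)\ge c(R/r)^u$ says nothing about $B$, so no contradiction with $\Lower F_2>0$ can be extracted this way (nor from a single scale step $R/r=2$, where the constant $c$ kills the estimate). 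This is precisely the difficulty the paper's Lemmas \ref{LowerDimensionSet} and \ref{MoranLower} are designed to overcome: one passes to a Moran subset of $F_2$ with strong separation and uniform branching and equips it with the equally-splitting measure, for which no component is ever $(\varepsilon,m)$-atomic; then $J$ must be empty, so $\#I\ge(1-\varepsilon)n_i$, and Lemma \ref{EntropyToCovering} gives $N(F_1,2^{-n_i})\ge 2^{(1-\varepsilon)^3 n_i}$, contradicting $\uBoxd F_1<1$. Your proposal has no substitute for this construction, and your argument that $J$ has positive density (via additivity of entropy under convolution when $\mu$ is saturated at most scales) is not valid either; as written, the $\Lower F_2>0$ case does not go through.
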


This theorem will be proved in Section \ref{main1proof} and the proof will rely on the inverse theorem of Hochman as described in Section \ref{hochman}. We learned after writing this paper that the Assouad dimension part of this result can be derived from \cite[Theorem  5]{Hoch4}, which is stated in terms of measures.   We obtain the following corollary in the symmetric case.

\begin{cor}\label{main1sym}
Let $F \subset \mathbb{R}$ with $0< \uBoxd F  < 1$.  If either $\Assouad F < 1$ or $ \Lower F >0$, then
\[
\uBoxd F < \uBoxd 2F.
\]
\end{cor}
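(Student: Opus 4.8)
The plan is to obtain Corollary~\ref{main1sym} from Theorem~\ref{main1asym} applied with $F_1=F_2=F$, handling separately one residual case that the theorem does not cover. To invoke the theorem we must check $\uBoxd F_1,\lBoxd F_2\in(0,1)$. The first holds by hypothesis. For the second, $\lBoxd F\le\uBoxd F<1$ always, so only $\lBoxd F>0$ is in question. If $\Lower F>0$ then $\lBoxd F\ge\Lower F>0$ by the inequality $\Lower\le\lBoxd$ recorded in Section~\ref{intro}; hence $\lBoxd F\in(0,1)$, and since the hypothesis ``$\Lower F_2>0$'' of Theorem~\ref{main1asym} also holds, the conclusion follows. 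Likewise, if $\Assouad F<1$ \emph{and} $\lBoxd F>0$, then $\lBoxd F\in(0,1)$ and the hypothesis ``$\Assouad F_1<1$'' holds, so again we are done. The only situation not yet covered is $\Assouad F<1$ together with $\lBoxd F=0$ (which forces $\Lower F=0$ as well); this can genuinely occur alongside $0<\uBoxd F<1$ — e.g.\ for suitable Moran-type sets whose digit frequencies oscillate — so some extra work is unavoidable.

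For that residual case I would localise the argument of Theorem~\ref{main1asym} to a single sequence of scales. First pass to a bounded $K\subseteq F$ with $\uBoxd K$ as close to $\uBoxd F$ as desired (still $0<\uBoxd K<1$ and $\Assouad K\le\Assouad F<1$). Fix $n_k\to\infty$ realising $\uBoxd K$, so $N(K,2^{-n_k})=2^{(\uBoxd K-o(1))n_k}$, and an intermediate scale $m_k$ with $m_k\to\infty$, $m_k=o(n_k)$. A pigeonhole over the $\le 2^{(\uBoxd K+o(1))m_k}$ dyadic cells of side $2^{-m_k}$ meeting $K$ yields one such cell $Q_k$ with $N(K\cap Q_k,2^{-n_k})\ge 2^{(\uBoxd K-o(1))(n_k-m_k)}$; rescaling $Q_k$ to the unit cell turns $K\cap Q_k$ into a set $K_k'$ with $N(K_k',2^{-(n_k-m_k)})=2^{(\uBoxd K-o(1))(n_k-m_k)}$ and still $\Assouad K_k'\le\Assouad F<1$. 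Feeding $K_k'$ into the single-scale (finitary) form of the inverse-theorem estimate underlying Theorem~\ref{main1asym} — this is exactly the place where $\Assouad K_k'<1$ plays the role that Ahlfors--David regularity plays in the Dyatlov--Zahl result — should give $N(K_k'+K_k',2^{-(n_k-m_k)})\ge 2^{(\uBoxd K+c)(n_k-m_k)}$ for a fixed $c=c(\uBoxd K,\Assouad F)>0$. Since $(K\cap Q_k)+(K\cap Q_k)\subseteq 2F$ is a rescaling of $K_k'+K_k'$ and $m_k=o(n_k)$, this forces $\uBoxd 2F\ge\limsup_k\frac{\log N((K\cap Q_k)+(K\cap Q_k),2^{-n_k})}{n_k}\ge\uBoxd K+c$, and choosing $\uBoxd K$ close enough to $\uBoxd F$ gives $\uBoxd 2F>\uBoxd F$.

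The step I expect to be most delicate is the finitary growth estimate for $K_k'+K_k'$, and specifically the claim that the gain $c$ depends only on $\uBoxd K$ (equivalently, on a positive lower bound for $N(K_k',2^{-(n_k-m_k)})$) and on the Assouad bound, and stays bounded below as $\uBoxd K\to\uBoxd F<1$; this uniformity is what allows the final choice of $K$, and it is precisely what one must extract from Hochman's inverse theorem (Section~\ref{hochman}). Everything else — the $o(1)$ losses from the $m_k$ extra dyadic scales and the bounded factors from dyadic rescaling — is routine because $m_k=o(n_k)$. Alternatively, and perhaps more cleanly, one can revisit the proof of Theorem~\ref{main1asym} directly: in the ``$\Assouad F_1<1$'' branch the hypothesis $\lBoxd F_2\in(0,1)$ should be used only to guarantee that $F_2$ is large at the scales dictated by $F_1$, which is automatic once $F_1=F_2=F$ and we work along a sequence realising $\uBoxd F$.
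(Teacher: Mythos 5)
Your reduction is sound, and you correctly spotted the key point that Corollary \ref{main1sym} is not a formal consequence of Theorem \ref{main1asym}: the only case with real content is $\Assouad F<1$ together with $\lBoxd F=0$. Your treatment of that case is correct in substance but packaged differently from the paper. The paper does not localise at all: it simply re-runs the proof of Lemma \ref{lma1} with $F_1=F_2=F$ and observes that the scales $2^{-n_i}$ used there are observing scales for $F$ itself, so in the \emph{atomic} branch the bound $N(F,2^{-n_i})\le 2^{5\varepsilon n_i}$ from Lemma \ref{EntropyToCovering} now controls the upper (not merely lower) box dimension; this yields that $\uBoxd 2F=\uBoxd F$ forces $\uBoxd F=0$ or $\Assouad F=1$, which is exactly the cleaner alternative you sketch in your final sentence. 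Your main route instead isolates a single-scale statement: if $A\subset[0,1]$ is $2^{-N}$-discretised with $\#A\ge 2^{\alpha_0 N}$ and its tree has no $(\varepsilon_1,m)$-full branching for $m\ge m_0$ (which is what $\Assouad F<1$ supplies, uniformly over all subtrees and hence over all your pieces $K\cap Q_k$), then $\#(A+A)\ge \#A\cdot 2^{\delta N}$ with $\delta$ depending only on $\varepsilon_1,m_0,\alpha_0$. This claim, your flagged delicate step, is indeed true and is proved by precisely the computation in Lemma \ref{lma1}: apply Theorem \ref{HochmanEntropy} to the uniform measure on $A$; the uniform branch is excluded by the no-full-branching property, and the atomic branch is excluded by Lemma \ref{EntropyToCovering} once $5\varepsilon<\alpha_0$, so the entropy growth alternative must hold, with the uniformity in $c$ you need. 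What your route buys is a reusable finitary lemma with explicit uniform gain; what it costs is the pigeonholing onto $Q_k$, which is in fact unnecessary: you can feed the discretisation $K(n_k)$ at observing scales for $K$ directly into the finitary estimate, since $\#K(n_k)\ge 2^{(\uBoxd K-o(1))n_k}$ already gives the entropy lower bound and $\Assouad F<1$ gives the branching bound, and this collapses your argument back onto the paper's.
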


 Notice that we only need the upper box dimension condition here  and so the result is not a direct corollary of the statement above.  However, a careful check of the proof shows that if the two sets are the same, then only information about the upper box dimension is required.  This will be commented on during the proof of Theorem \ref{main1asym}.

We also obtain a corollary about sumsets of sequences of sets which should be compared to the result of Lindenstrauss, Meiri and Peres concerning $\times p$ invariant sets mentioned in the introduction.

\begin{cor}\label{increasingcor}
Let $\{E_i\}$ be a sequence of subsets of $\mathbb{R}$ which satisfy $\Lower E_i > 0$ for all $i$.  Then  $\uBoxd (E_1 + \cdots + E_n)$ forms a strictly increasing sequence in $n$ unless it reaches 1, in which case it becomes constantly equal to 1 from then on.
\end{cor}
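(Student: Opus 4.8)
The plan is to reduce the statement to an iterated application of Theorem \ref{main1asym}, treating the partial sumset $S_{n-1} = E_1 + \cdots + E_{n-1}$ as the set $F_1$ and $E_n$ as the set $F_2$. First I would dispose of the easy cases. If at some stage $\uBoxd S_n = 1$, then since sumsets only grow (as $0 \in \mathbb{R}$ is not assumed, one should instead note $\uBoxd (S_n + E_{n+1}) \ge \uBoxd(S_n + \{a\}) = \uBoxd S_n$ for any fixed $a \in E_{n+1}$, using translation invariance of box dimension), the sequence stays at $1$ thereafter, giving the second alternative. So assume $\uBoxd S_n < 1$ for all $n$; I must show strict growth at every step.

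The base case $n=2$ is immediate: $\uBoxd E_1 \in (0,1)$ must be checked — here one uses $\Lower E_1 > 0$, which forces $\uBoxd E_1 \ge \Lower E_1 > 0$ (using $\Lower E_1 \le \lBoxd E_1 \le \uBoxd E_1$), and $\uBoxd E_1 < 1$ by the standing assumption; similarly $\lBoxd E_2 \ge \Lower E_2 > 0$ and $\lBoxd E_2 \le \uBoxd S_2 < 1$; and $\Lower E_2 > 0$ supplies the second hypothesis of Theorem \ref{main1asym}. Hence $\uBoxd S_2 = \uBoxd(E_1 + E_2) > \uBoxd E_1 = \uBoxd S_1$. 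For the inductive step, apply Theorem \ref{main1asym} with $F_1 = S_{n-1}$ and $F_2 = E_n$: one needs $\uBoxd S_{n-1} \in (0,1)$, which holds since $\uBoxd S_{n-1} \ge \uBoxd S_1 = \uBoxd E_1 > 0$ and $\uBoxd S_{n-1} < 1$ by assumption; one needs $\lBoxd E_n \in (0,1)$, where $\lBoxd E_n \ge \Lower E_n > 0$ and $\lBoxd E_n \le \uBoxd S_n < 1$; and $\Lower E_n > 0$ gives the required dimension condition. Theorem \ref{main1asym} then yields $\uBoxd S_{n-1} < \uBoxd(S_{n-1} + E_n) = \uBoxd S_n$, completing the induction.

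The only genuinely delicate point is the bookkeeping around the interval conditions: Theorem \ref{main1asym} requires both $\uBoxd F_1$ and $\lBoxd F_2$ to lie strictly inside $(0,1)$, so at each stage I must certify that the relevant box dimensions have not yet hit $1$, which is exactly why the dichotomy in the statement is phrased the way it is — once any partial sumset reaches upper box dimension $1$ we are in the second regime and there is nothing left to prove. I expect no real obstacle here beyond being careful that all the inequalities $\Lower E_i \le \lBoxd E_i \le \uBoxd E_i$ and $\lBoxd(S_{n-1}+E_n) \le \uBoxd(S_{n-1}+E_n)$ are invoked correctly, and that translation invariance of box dimension is used to justify monotonicity of $\uBoxd S_n$ under adding a further summand.
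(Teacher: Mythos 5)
Your proposal is correct and is essentially the paper's own argument: the paper likewise deduces the corollary by applying Theorem \ref{main1asym} with $F_1 = E_1 + \cdots + E_{n-1}$ and $F_2 = E_n$, using $\Lower E_n > 0$. The only difference is that you spell out the routine verification of the hypotheses ($\uBoxd F_1, \lBoxd F_2 \in (0,1)$, monotonicity via translation) that the paper leaves implicit, and this bookkeeping is carried out correctly.
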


\begin{proof}
This follows immediately from Theorem \ref{main1asym} where for each $n$ we take $F_1 = E_1 + \cdots + E_n$ and $F_2 = E_{n+1}$.
\end{proof}

Corollary \ref{increasingcor} is stronger than the result of Lindenstrauss, Meiri and Peres in that the sets $E_i$ need not be dynamically invariant, and the assumption $\Lower E_i > 0$ for all $i$ allows  $\Haus E_i $ to converge to 0 at  any rate.  However, it is also weaker since we obtain a much weaker form of dimension growth: strict increase rather than convergence to 1.

Following  \cite{Fr}, we obtain an Assouad dimension version of Corollary \ref{main1sym} by passing the problem to the level of tangents.
\begin{cor}\label{main1cor}
Let $F \subseteq \mathbb{R}$.  If $0<\Assouad F < 1$, then
\[
\Assouad F < \Assouad 2F.
\] 
\end{cor}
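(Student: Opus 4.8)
The plan is to reduce the Assouad dimension statement to the box dimension result already established in Corollary \ref{main1sym} by passing to weak tangents, following the strategy of \cite{Fr}. The key structural fact is that the Assouad dimension of a set equals the maximal upper box dimension of its weak tangents; more precisely, for any closed $F \subseteq \mathbb{R}$ there is a weak tangent $T$ (a Hausdorff limit of rescaled pieces $\lambda_k(F \cap B(x_k,R_k)) $) with $\uBoxd T = \Assouad T = \Assouad F$, and every weak tangent $T$ satisfies $\Assouad T \le \Assouad F$. I would invoke this as a known result about the Assouad dimension (it is exactly the machinery used in \cite{Fr}).

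First I would fix a weak tangent $T$ of $F$ realising $\Assouad F = \uBoxd T$. Since $0 < \Assouad F < 1$ by hypothesis, we have $0 < \uBoxd T < 1$, and moreover $\Assouad T \le \Assouad F < 1$, so $T$ satisfies the hypotheses of Corollary \ref{main1sym}. Applying that corollary gives $\uBoxd T < \uBoxd 2T$. The second step is to relate $2T$ to a weak tangent of $2F$: if $T$ arises as a limit of rescalings of $F \cap B(x_k,R_k)$, then the same sequence of rescalings (or an appropriate adjustment of centres and radii) applied to $2F$ produces a weak tangent of $2F$ containing a translate of $2T$, up to passing to a further subsequence. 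Concretely, rescaled copies of $F+F$ contain sums of rescaled copies of the relevant pieces of $F$, and Hausdorff limits pass through sums, so $2T$ (or a subset obtained by the limiting procedure) embeds into a weak tangent $S$ of $2F$. Hence $\uBoxd S \ge \uBoxd 2T > \uBoxd T = \Assouad F$, and since $\Assouad 2F \ge \uBoxd S$ for any weak tangent $S$ of $2F$, we conclude $\Assouad 2F > \Assouad F$.

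The main obstacle is the second step: the weak tangent of $2F$ obtained naturally from a weak tangent $T$ of $F$ need not literally be $2T$, and one must be careful that the rescaling centres and radii used to produce $T$ can be coordinated so that the corresponding rescalings of $2F$ converge (along a subsequence) to a set that genuinely contains $2T$ up to translation. There is also a minor point that weak tangents are usually defined for closed or compact sets, so one should first reduce to a bounded (or closed) subset of $F$ of full Assouad dimension — this is standard since the Assouad dimension is attained on bounded subsets and is stable under taking closures. Once these bookkeeping issues are handled, the argument is a clean transference: "regularity-type" hypotheses (here $\Assouad F<1$) are inherited by tangents, dimension growth for box dimension is known on tangents by Corollary \ref{main1sym}, and growth lifts back to the original set because tangents of the sumset dominate sums of tangents.
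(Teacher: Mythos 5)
Your proposal is correct and follows essentially the same route as the paper: it passes to a weak tangent attaining the Assouad dimension (the paper's Lemma \ref{lma2}), shows that $2E$ embeds in a weak tangent of $2F$ via a compactness/subsequence argument (the paper's Lemma \ref{lma2.1}), applies the box-dimension growth result, and lifts back using the Mackay--Tyson inequality (Proposition \ref{mt}). The "main obstacle" you flag is exactly what the paper's Lemma \ref{lma2.1} resolves, and your sketch of it is the same argument.
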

This corollary will be proved in Section \ref{main1corproof}. Corollary \ref{main1cor} is particularly interesting because it is a statement only about the Assouad dimension and is false if Assouad dimension is replaced by Hausdorff, or upper or lower box dimension, due to the examples in \cite{SS}.
\begin{rem}    
Similar results actually hold for $F-F$ instead of $2F$. To see this, it is sufficient to observe that $F$ and $-F$ have the same associated tree, $T$, up to reflection, where associated trees will be defined in Section \ref{hochman}.
\end{rem}

Next we  derive general conditions which force the dimensions of the iterated sumset to approach 1 in the limit. 

\begin{thm}\label{main2}
Let $F\subseteq \mathbb{R}$.  If $\Lower F > 0$, then
\[
\lim_{n\rightarrow\infty}\Lower nF = 1.
\]
In particular, if $F$ is closed, or even if $F$ has a closed subset with positive lower dimension, then
\[
\lim_{n\rightarrow\infty}\Haus nF = 1.
\]
\end{thm}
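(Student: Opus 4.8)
The plan is to leverage Theorem \ref{main1asym} iteratively, together with the fact that the lower dimension of an iterated sumset does not collapse. First I would observe that the hypothesis $\Lower F > 0$ persists under iterated addition: since $nF \supseteq x + (n-1)F$ for any fixed $x \in F$, and the lower dimension is bi-Lipschitz invariant (in particular translation invariant), we have $\Lower nF \ge \Lower (n-1)F \ge \cdots \ge \Lower F > 0$. Consequently the sequence $\Lower nF$ is nondecreasing and bounded above by $1$, so it converges to some limit $\ell \in (0,1]$. Suppose for contradiction that $\ell < 1$. The aim is to apply Corollary \ref{increasingcor} (equivalently Theorem \ref{main1asym} with $F_1 = nF$, $F_2 = F$) to show that the \emph{upper box} dimensions $\uBoxd nF$ strictly increase, and then to promote this to a statement forcing $\ell = 1$.

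The key technical point is to compare $\Lower nF$ with $\uBoxd nF$ and push the strict-growth mechanism. Since $\Lower F > 0$ we may as well pass to a bounded (indeed closed) subset of positive lower dimension — note $\Lower$ is defined via balls of radius up to $\mathrm{diam}(F)$, so restricting to a bounded piece only helps. On such a bounded set, $0 < \Lower nF \le \uBoxd nF$, and if $\uBoxd nF < 1$ then Theorem \ref{main1asym} (with $F_1 = F_2 = nF$, using the $\Lower F_2 > 0$ clause) gives $\uBoxd nF < \uBoxd 2nF \le \uBoxd (2n)F$; more usefully, applying it with $F_1 = mF$ and $F_2 = F$ for $m \ge n$ shows $\uBoxd mF$ is strictly increasing in $m$ (as in Corollary \ref{increasingcor}) until it hits $1$. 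The real work is to show this strict increase is \emph{quantitatively bounded below} in terms of $\Lower mF \ge \Lower F =: \delta > 0$: inspecting the proof of Theorem \ref{main1asym} via Hochman's inverse theorem, the gap $\uBoxd (mF + F) - \uBoxd mF$ should be bounded below by a positive quantity $\varepsilon(\delta)$ depending only on $\delta$ and on $\uBoxd mF$ staying in a compact subinterval of $(0,1)$ — because the lower dimension bound $\delta$ controls the "branching at all scales" input that Hochman's theorem needs, uniformly in $m$. Once that uniform gap is in hand, $\uBoxd mF \to 1$, and since $\Haus nF \le \uBoxd nF$ this does \emph{not} immediately finish the job; instead one runs the same argument directly on the lower dimension. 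Here I would use that Theorem \ref{main1asym}'s proof, when both sets have lower dimension bounded below, actually yields a lower bound on the \emph{lower} box dimension of the sum (the inverse theorem argument is scale-by-scale), hence on $\Lower$ after the usual subset manipulation, giving $\Lower (mF+F) \ge \Lower mF + \varepsilon(\delta)$ until it reaches $1$; summing these gaps forces $\ell = 1$, a contradiction.

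The final "in particular" clause is then immediate: if $F$ is closed, or has a closed subset $K$ with $\Lower K > 0$, then for closed sets $\Lower E \le \Haus E$ (stated in the excerpt), so applying the main conclusion to $K$ gives $\Haus nF \ge \Haus nK \ge \Lower nK \to 1$, and since $\Haus nF \le 1$ trivially, $\Haus nF \to 1$.

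\medskip

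The main obstacle I anticipate is extracting the \emph{uniform} lower bound $\varepsilon(\delta)$ on the dimension increment at each step: Theorem \ref{main1asym} as stated is purely qualitative (strict inequality), so one must reopen its proof and verify that the Hochman-inverse-theorem machinery delivers an increment depending only on the lower dimension constant and on the current dimension lying in $[\delta, 1-\eta]$, not on $m$ or on finer structure of $mF$. Without such uniformity, the strictly increasing sequence $\Lower nF$ could in principle converge to a limit $\ell < 1$ with vanishing increments, and the argument collapses. A secondary subtlety is making sure the passage between lower dimension and lower box dimension (via bounded/closed subsets and the $\sup$ in the definition of $\Lower$ for iterated sumsets, which may themselves be unbounded) is handled cleanly, so that the increment genuinely accrues to $\Lower nF$ and not merely to the box dimension of some shrinking subset.
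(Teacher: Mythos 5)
Your strategy --- make Theorem \ref{main1asym} quantitative so as to extract a uniform increment $\varepsilon(\delta)$ for the upper box dimension, and then ``promote'' this to the lower dimension via ``the usual subset manipulation'' --- breaks down at exactly the promotion step, and this is not a technicality you flagged and could patch: it is the heart of the matter. The proof of Theorem \ref{main1asym} produces a global box-counting/entropy increase at observing scales, whereas $\Lower$ is governed by covering numbers of balls $B(x,R)$ at the \emph{worst} locations and pairs of scales; since $\Lower \leq \lBoxd$, a lower bound on the (lower) box dimension of $mF+F$ gives no lower bound whatsoever on $\Lower (mF+F)$, and no passage to subsets converts one into the other. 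The paper's proof is structured differently precisely to deal with this: it runs Hochman's inverse theorem \emph{locally}. Given $s'=\Lower \supp(\mu*\nu)$, it chooses dyadic intervals $E_i$ and scales $m_i$ that witness $s'$ (the inequality \eqref{gamma}), finds sub-intervals $F_{1,i},F_{2,i}$ of positive measure with $F_{1,i}+F_{2,i}\subseteq E_i$, and applies Theorem \ref{HochmanEntropy} to the renormalized conditional measures $\mu_i*\nu_i$. Moreover, the uniform-in-$k$ increment $\delta$ is not obtained by re-opening Theorem \ref{main1asym}; it comes from applying the inverse theorem with a \emph{fixed} pair $(\varepsilon,m)$ for which the atomic alternative can never occur, because by Lemmas \ref{LowerDimensionSet} and \ref{MoranLower} one may replace $F$ by a Moran-construction subset carrying a measure $\nu$ with $\nu^{x,j}$ never $(\varepsilon,m)$-atomic, at any point $x$ and any scale $j$. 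That nowhere-atomicity is the uniform input you were hoping the constant $\delta=\Lower F$ would supply, and the localization to witnessing intervals and scales is what makes the resulting increment accrue to $\Lower (k+1)F$ (until it exceeds $(1-\varepsilon)^3$) rather than to a box dimension of the whole set. Your proposal contains neither ingredient, so the gap you yourself identify is real and, along the route you suggest, would not close.

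A secondary point: your opening claim that $\Lower nF \geq \Lower (n-1)F$ because $nF \supseteq x+(n-1)F$ and $\Lower$ is translation invariant implicitly uses monotonicity of $\Lower$ under inclusion, which fails (this is exactly why the paper mentions the modified lower dimension). The inequality itself is true, but for a different reason coming from the sumset structure at \emph{every} point: writing $x=a+b \in nF$ with $a\in F$, $b\in (n-1)F$, one has $a+\bigl(B(b,R)\cap (n-1)F\bigr) \subseteq B(x,R)\cap nF$, so the covering lower bound at $b$ transfers to $x$. Your treatment of the final ``in particular'' clause, via $\Lower \leq \Haus$ for closed sets applied to a closed subset $K$, is correct and agrees with the paper.
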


This theorem will be proved in Section \ref{main2proof}, again relying on Hochman's inverse theorem.  Note that since the lower dimension is a lower bound for lower and upper box dimension and Assouad dimension we see that $\dim nF \to 1$ for these dimensions also. Theorem \ref{main2} applies to Ahlfors-David regular sets with dimension strictly between 0 and 1 and therefore answers a question posed to us by Josh Zahl by showing that the Hausdorff dimension of iterated sumsets of Ahlfors regular sets approaches 1.    Corollary \ref{increasingcor}  shows that,  in the setting of Theorem \ref{main2},  $\uBoxd nF$ is strictly increasing in $n$ while it is less than 1.    Theorem \ref{main2}  should also be compared with the results in \cite{LMP}, in particular the corollary discussed in our introduction concerning homogeneous iterated sumsets. 

There exist sets of zero lower dimension and positive Hausdorff dimension for which the box dimension of the  iterated sumsets does not approach 1, see \cite{SS}. Thus Theorem \ref{main2} is sharp in the sense that lower dimension cannot be replaced by one of the other dimensions discussed in this paper. We note that the Assouad dimension of the set does not influence Theorem \ref{main2}.  The work of Astels \cite{astels} is related to Theorem \ref{main2}. In particular, \cite[Theorem 2.4]{astels} proves that if a Cantor set $C$ satisfies a  certain `thickness condition', then $nC$ contains an interval for some $n$.

If a set has positive \emph{Fourier} dimension then the Hausdorff dimension of the iterated sumset will approach 1 (in fact it will contain an interval after finitely many steps,  see \cite[Proposition 3.14]{Ma}). However, lower dimension and Fourier dimension are incomparable and  deterministic examples of sets with positive Fourier dimension are somewhat rare.  For example,  being Ahlfors-David regular does not imply positive Fourier dimension but does imply positive lower dimension.  Indeed, the middle third Cantor set is well-known to have Fourier dimension 0. However, sets with positive lower dimension (or at least a subset with positive lower dimension) are more prevalent.  For example,  uniformly perfects has positive lower dimension.  Such sets include self-similar sets, self-conformal sets, self-affine sets, and limit sets of geometrically finite Kleinian groups.  

Hochman \cite{Hoch3} has also extended the inverse theorems to higher dimensions. This provides a platform for us to generalise our results on sumsets to higher dimensions, but we do not pursue the details. The same approach and arguments apply, but the results are slightly different to accommodate the higher dimensional phenomenon that dimension can get `trapped' in a subspace.

\subsection{An Erd\H{o}s-Volkmann type theorem for semigroups}

In Section \ref{intro} we briefly mentioned a  dichotomy for the Hausdorff dimension of Borel subrings of $\mathbb{R}$ (it can only be 0 or 1). This dichotomy fails for subgroups, but if we consider the box dimension instead, a similar dichotomy holds. In fact, if $F\subset\mathbb{R}$ is an additive group then $F$ is dense in $\mathbb{R}$ or $F$ is uniformly discrete.  We say a set is \emph{uniformly discrete} if $\inf|x-y|>0$ where the infimum is taken over all pairs of distinct elements $x,y$ in the set.   We recall that a dense set has full box dimension whilst a uniformly discrete set has 0 box dimension, even when unbounded. 

A natural extension of this kind of problem is to remove even more structure and so we consider additive \emph{semi}groups. (Nonempty) semigroups can of course be uniformly discrete, e.g. $\mathbb{Z}$ or $\mathbb{N}$,  or dense, e.g. $\mathbb{Q}$, but there are three further possibilities:
\begin{itemize}
		\item[(1)]  the semigroup is somewhere dense, but not dense, e.g. $[1,\infty) \cap \mathbb{Q}$ or $(-\infty, -2) \cup \{-1\}$,
\item[(2)]  the semigroup is discrete, but not uniformly discrete, e.g. the semigroup generated by $\{1, \alpha\}$ where $\alpha>0$ is irrational,
\item[(3)] the semigroup is nowhere dense, but not discrete, e.g. the semigroup generated by the set $\{2-1/n : n \in \mathbb{N}\}$.
        \end{itemize}
Note that in the three `new' cases, the semigroup is necessarily contained in either $[0, \infty)$ or $(-\infty, 0]$.  The only interesting case from a dimension point of view is (3), noting that in case (1) the box dimensions are trivially 1 and in case (2) they are trivially 0. In case (3) we obtain the following result as a consequence of our main results.

\begin{cor}\label{cor1}
If  $F\subset \mathbb{R}$ is an additive semigroup with  $\uBoxd F \in (0,1)$, then $\Lower F  = 0$ and at least one of the following holds:
        \begin{itemize}
        \item[(i)] $\Assouad F \cap I= 1$ for some bounded interval $I\subset \mathbb{R}$
        \item[(ii)] $\uBoxd F \cap [-2^{n},2^{n}] < \uBoxd F \cap [-2^{n+1},2^{n+1}]$ for all sufficiently large integers $n$.
        \end{itemize}       
\end{cor}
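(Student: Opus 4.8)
The plan is to derive this from the main theorems by analyzing the structure of an additive semigroup $F$ with $\uBoxd F \in (0,1)$. First I would establish $\Lower F = 0$. Suppose toward a contradiction that $\Lower F > 0$. Then by Theorem~\ref{main2}, $\lim_{n\to\infty} \Lower nF = 1$, and since $nF \subseteq F$ (because $F$ is a semigroup!), monotonicity-type considerations give $\uBoxd F \geq \lBoxd F \geq \Lower nF$ via the chain $\Lower nF \le \lBoxd nF \le \lBoxd F$, forcing $\uBoxd F = 1$, contradicting $\uBoxd F < 1$. So $\Lower F = 0$. Along the way I should note (as the paper's semigroup discussion indicates) that $F$ must be contained in $[0,\infty)$ or $(-\infty,0]$, since a semigroup containing both a positive and a negative element, unless it is uniformly discrete or dense, leads to one of cases (1) or (2), which are excluded by $\uBoxd F \in (0,1)$; by replacing $F$ with $-F$ if necessary (which does not change any of the dimensions involved), assume $F \subseteq [0,\infty)$.

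Next I would show that if (i) fails then (ii) holds. Assume $\Assouad (F \cap I) < 1$ for every bounded interval $I \subseteq \mathbb{R}$; equivalently, every bounded piece of $F$ has Assouad dimension strictly less than $1$. Fix a large integer $n$ and set $F_1 = F \cap [-2^n, 2^n] = F \cap [0, 2^n]$ and $F_2 = F \cap [0, 2^n]$ as well (the symmetric case). If $\uBoxd F_1 = 0$, I would handle this separately: since $\uBoxd F \in (0,1)$ there must be some scale at which positive box dimension appears, and $F$ being a semigroup means $2F_1 \subseteq F \cap [0, 2^{n+1}]$, so I can push the relevant scale up; the point is that for all sufficiently large $n$ we have $\uBoxd F_1 \in (0,1)$, because $\uBoxd (F \cap [0,2^n]) \nearrow \uBoxd F > 0$ as $n \to \infty$ and these quantities are bounded above by $\uBoxd F < 1$. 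With $\uBoxd F_1 \in (0,1)$ and $\Assouad F_1 < 1$, Corollary~\ref{main1sym} applies to give $\uBoxd F_1 < \uBoxd 2F_1$. Finally, since $F$ is a semigroup, $2F_1 = F_1 + F_1 \subseteq F \cap [0, 2^{n+1}] \subseteq F \cap [-2^{n+1}, 2^{n+1}]$, so by monotonicity of upper box dimension
\[
\uBoxd (F \cap [-2^n,2^n]) = \uBoxd F_1 < \uBoxd 2F_1 \leq \uBoxd (F \cap [-2^{n+1}, 2^{n+1}]),
\]
which is exactly (ii).

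I expect the main obstacle to be the bookkeeping around the degenerate low-dimensional scales: ensuring that $\uBoxd (F \cap [0,2^n]) \in (0,1)$ for \emph{all sufficiently large} $n$ rather than merely for infinitely many $n$, so that Corollary~\ref{main1sym} can be invoked uniformly. This needs the observation that $n \mapsto \uBoxd(F \cap [0,2^n])$ is nondecreasing with supremum $\uBoxd F > 0$, hence eventually positive, combined with the fact that it is bounded by $\uBoxd F < 1$; once $\uBoxd(F\cap[0,2^{n_0}]) > 0$ for some $n_0$, monotonicity secures positivity for all $n \ge n_0$. A secondary subtlety is verifying that translating to the one-sided case ($F \subseteq [0,\infty)$) is legitimate and that replacing $F$ by $-F$ leaves (i) and (ii) unaffected, which is immediate since all dimensions used here, and the interval structure, are preserved under $x \mapsto -x$. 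Everything else is a direct appeal to Theorem~\ref{main2} and Corollary~\ref{main1sym} together with the defining semigroup property $nF \subseteq F$.
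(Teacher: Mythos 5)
Your proposal is correct and follows essentially the same route as the paper: $\Lower F=0$ via Theorem \ref{main2} and $nF\subseteq F$, reduction to $F\subseteq[0,\infty)$, and then Corollary \ref{main1sym} applied to $F\cap[0,2^n]$ together with the semigroup inclusion $2(F\cap[0,2^n])\subseteq F\cap[0,2^{n+1}]$ and the eventual positivity of $\uBoxd(F\cap[0,2^n])$. The only cosmetic difference is that the paper routes the last step through the dyadic decomposition $G_k=\bigcup_i F_i$ and the bound $\uBoxd 2G_k\leq\max\{\uBoxd G_k,\uBoxd F_{k+1}\}$, whereas you use monotonicity of upper box dimension directly, which is equivalent.
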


Note that every additive subsemigroup of $\mathbb{R}$ (apart from $\{0\}$ and $\emptyset$)  contains an infinite arithmetic progression and therefore has full Assouad dimension, see \cite{FY}, and so the interest of the conclusion  (ii) is that this dimension is obtained in a bounded component.  Also note that additive semigroups with  $\uBoxd F \in (0,1)$ exist and can be constructed by generating a semigroup by a suitable translate of one of the sets $E$ constructed by Schmeling-Shmerkin \cite{SS} for which $\uBoxd nE$ does not approach 1, but $\uBoxd E >0$.

\begin{proof}
The fact that $\Lower F  = 0$   follows immediately from Theorem \ref{main2} since $nF \subset F$ for all $n$ and so $\Lower F  >0$ would guarantee that $\uBoxd F = 1$.  Assume without loss of generality that   $F \subseteq [0,\infty)$   and decompose $F$ as follows
	\[
	F\cap [0,\infty)=(F\cap [0,1]) \cup \left(\bigcup_{i\geq 0} F\cap [2^i,2^{i+1} ]\right)=\bigcup_{i\geq -1} F_i,
	\]
where $F_{-1}=F\cap [0,1]$ and $F_{i}=F\cap [2^i,2^{i+1}]$ for $i\geq 0$. We denote the partial union by $G_k=\bigcup_{i = -1}^{k} F_i$. Since $F$ is a semigroup, $ 2G_k \subset F_{k+1}\cup G_k $ and therefore
	\[
	\uBoxd 2G_k\leq \max\left\{\uBoxd G_k,\uBoxd F_{k+1}\right\}. 
	\]
	By Corollary \ref{main1sym} we see that either $\uBoxd G_k=0$ or $\Assouad G_k=1$ or $\uBoxd F_{k+1}>\uBoxd G_{k}$. Since we assume $\uBoxd F \in (0,1)$, there exists an integer $k_0$ such that $\uBoxd G_k \in (0,1)$ for all $k \geq k_0$.  Therefore, either $\Assouad G_{m}=1$ for some $m$ in which case we are in (i) and can choose $I = [0,2^{m}]$ or $\Assouad G_m< 1$ for all $m$, in which case $\uBoxd F \cap [0,2^{n}] < \uBoxd F \cap [0,2^{n+1}]$ for all $n \geq k_0$ and we are in case (ii).
\end{proof}

\subsection{Dimension estimates for distance sets}

Sumsets $F+F$ are related to difference sets $F-F$ and distance sets $|F-F|$ and so we can use our techniques to get results for these sets too. The distance set of $F \subset \mathbb{R}^d$ is
\[
D(F)=\{|x-y|\colon x,y\in F\}.
\]
For example, it follows immediately from Corollary \ref{main1cor} that for a set $F\subset \mathbb{R}$ of Assouad dimension strictly between 0 and 1 we have
\[
\Assouad D(F) > \Assouad F.
\]
Geometric properties of distance sets have been studied extensively with much effort focusing on Falconer's distance set conjecture, which stemmed from  \cite{falconerdistance}. One version of this asserts that if a Borel set $F\subset\mathbb{R}^d$ has Hausdorff dimension strictly larger than $d/2$, then the distance set should have positive Lebesgue measure. A related problem, concerning dimension only is as follows.

\begin{conj}[Falconer's conjecture]
Let $\dim$ denote one of the Hausdorff, packing, box or Assouad dimensions.	If $F\subset\mathbb{R}^d$ satisfies $\dim F>d/2$, then $\dim D(F) = 1$.
\end{conj}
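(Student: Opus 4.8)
The statement is the central open problem of the area, so I can only propose the standard line of attack and indicate precisely where it stalls. The plan is to route the problem through Mattila's Fourier-analytic framework, reduce it to a single sharp harmonic-analytic estimate, and then transfer between the four dimensions. Since $\Haus \le \Boxd \le \Assouad$ and packing dimension lies between Hausdorff and box, the hypotheses $\Boxd F > d/2$ and $\Assouad F > d/2$ are strictly weaker than $\Haus F > d/2$, so I would first settle the Hausdorff case and then descend. For the Hausdorff case, after passing to a compact subset of full dimension, take (by Frostman's lemma) a compactly supported Borel probability measure $\mu$ on $F$ with finite $s$-energy $I_s(\mu) = \iint |x - y|^{-s}\, d\mu(x)\, d\mu(y) < \infty$ for every $s < \Haus F$. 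Let $\nu$ be the push-forward of $\mu \times \mu$ under $(x,y) \mapsto |x - y|$, a measure supported on $D(F)$; it then suffices to show $\Haus \nu = 1$, and for that to show that the $t$-energy $I_t(\nu)$ is finite for every $t < 1$. Expanding $I_t(\nu)$, using polar coordinates and Plancherel for the surface measure $\sigma$ on $S^{d-1}$ (Mattila's computation, see \cite{Ma}) reduces matters to an estimate of the schematic shape
\[
  \int_1^{\infty} \sigma_\mu(R)^{2}\, R^{d - 2 + t}\, dR < \infty, \qquad \sigma_\mu(R) := \int_{S^{d-1}} |\widehat{\mu}(R\omega)|^{2}\, d\sigma(\omega).
\]
The box and packing versions do not follow from this, since their hypotheses are weaker, but they admit a parallel scale-localised treatment: along a sequence of scales $\delta \to 0$ at which $F$ carries $\gtrsim \delta^{-s}$ disjoint $\delta$-balls for some fixed $s > d/2$, one runs the same restriction-type estimate for the normalised counting measure and counts the $\delta$-separated distances it produces, a $\delta$-discretised incidence problem. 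The Assouad version then follows from the box version by passing to a weak tangent (micro-set) of $F$ of box dimension arbitrarily close to $\Assouad F$, exactly as in \cite{Fr}, using the good behaviour of distance sets under this operation.

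The crux is the \emph{sharp spherical average estimate}: for every $\varepsilon > 0$ and every Frostman measure $\mu$ of dimension $s$,
\[
  \sigma_\mu(R) \;\lesssim_{\varepsilon}\; I_s(\mu)\, R^{-\beta(s) + \varepsilon}
\]
with $\beta(s) > d/2$ whenever $s > d/2$ (conjecturally $\beta(s) = s$ in the relevant range, the trivial $L^1$-average exponent, with the curvature of $S^{d-1}$ expected to do even better for larger $s$). Feeding $s = d/2 + \delta$ into the displayed energy integral gives $2\beta(s) > d > d - 1 + t$ for every $t < 1$, hence $I_t(\nu) < \infty$ for all $t < 1$, hence $\Haus D(F) = 1$; the discretised analogue then propagates the conclusion to box, packing and Assouad dimensions. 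Equivalently, the needed input can be phrased as a sharp $L^2$ Fourier restriction/extension inequality for the sphere tested against functions whose Fourier support lies on an $s$-dimensional fractal, or, in the modern language, as a refined $\ell^2$-decoupling estimate combined with a broad--narrow / polynomial-partitioning decomposition; a parallel but essentially equivalent route replaces it by a sharp radial-projection and point-line incidence estimate in the spirit of Orponen and of Guth--Iosevich--Ou--Wang.

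The main obstacle is precisely this estimate, which is the long-standing open heart of the distance-set problem: the curvature of $S^{d-1}$ is known to beat the trivial exponent only above a threshold strictly larger than $d/2$. The best available spherical-average and extension bounds (Wolff in the plane, Erdo\u{g}an and later Du--Zhang and collaborators in higher dimensions, the sharpest planar distance-set bound due to Guth--Iosevich--Ou--Wang with subsequent refinements) yield the conclusion only under the stronger hypotheses $\dim F > d/2 + 1/4$ for general $d$ and $\dim F > 5/4$ for $d = 2$. Closing the remaining gap appears comparable in difficulty to the Kakeya and local smoothing conjectures, so I do not expect to resolve it here; realistically the deliverable is a further quantitative lowering of the threshold, or a proof for structured classes of sets — for instance Ahlfors--David regular sets in the plane, where the conclusion for box dimension is already known (Orponen), precisely because the regularity (the recurring hypothesis of this paper) lets one pigeonhole the relevant scales and feed a cleaner input into the harmonic analysis.
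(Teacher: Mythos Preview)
The paper does not attempt to prove this statement: it is explicitly stated as a \emph{conjecture} and left open. The surrounding text only records partial progress from the literature (Orponen for packing dimension of Ahlfors--David regular planar sets, Shmerkin for Hausdorff dimension of planar Borel sets with equal Hausdorff and packing dimensions, and Fraser for the Assouad version in $\mathbb{R}^2$). There is therefore no proof in the paper to compare your proposal against.

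You correctly identify the statement as the central open problem and do not pretend to resolve it; your summary of the Mattila spherical-average framework, the sharp decay estimate that would close the argument, and the current thresholds (Wolff, Erdo\u{g}an, Du--Zhang, Guth--Iosevich--Ou--Wang) is accurate and appropriately hedged. One small logical wobble: you first write that since the box and Assouad hypotheses are weaker you would ``settle the Hausdorff case and then descend'', which is backwards --- proving the version with the strongest hypothesis gives nothing for the versions with weaker hypotheses --- but you immediately correct yourself in the next paragraph by noting that the box and packing versions require a separate scale-localised argument and that the Assouad version is obtained from the box version via weak tangents (which is indeed how it was resolved in $\mathbb{R}^2$). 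In short, your proposal is an honest non-proof of an open conjecture, and that is the correct answer here.
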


The above conjecture has been proved for Ahlfors-David regular sets in $\mathbb{R}^2$ for packing dimension \cite{Or} and, more recently, for Hausdorff dimension for Borel sets in $\mathbb{R}^2$ with equal Hausdorff and packing dimension \cite{Sch}.  It has also been resolved in $\mathbb{R}^2$ for the Assouad dimension \cite{Fr}.

Instead of looking for a condition ensuring the distance set has full dimension, we  obtain a lower estimate for the dimension of the distance set as a function of the dimension of the original set. We also restrict ourselves to the Assouad and upper box dimension of sets for this section. A recent result by Fraser \cite{Fr} provides lower bounds for the Assouad dimension of the distance set for sets of large Assouad dimension. The following result complements these bounds by providing lower bounds for sets with small dimension.

\begin{thm}\label{distancethmA}
	Let $F\subset\mathbb{R}^d$ be such that  $0<\Assouad F<d$.  Then
	\[
	\Assouad D(F)>\frac{\Assouad F}{d}.
	\]
\end{thm}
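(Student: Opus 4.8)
The plan is to reduce the higher-dimensional statement to the one-dimensional result already available, namely Corollary \ref{main1cor}, by projecting onto a well-chosen line. Fix $F \subset \mathbb{R}^d$ with $0 < \Assouad F < d$. The key observation is that for any unit vector $\theta \in S^{d-1}$, the orthogonal projection $\pi_\theta(F)$ satisfies $D(\pi_\theta(F)) \subseteq D(F)$, since $|\pi_\theta(x) - \pi_\theta(y)| \le |x-y|$ but more to the point $|\pi_\theta(x)-\pi_\theta(y)| = |\langle x-y,\theta\rangle|$ is itself a realised distance after projecting; in fact the cleaner containment to use is $D(\pi_\theta F) \subseteq D(F)$ only up to the caveat that projected distances need not be distances in $F$. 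I would instead use the \emph{difference set}: $\pi_\theta(F) - \pi_\theta(F) = \pi_\theta(F - F)$, and $D(F) = |F-F|$, so $\pi_\theta(F-F) \subseteq$ the symmetrisation of $D(F)$; since Assouad dimension is invariant under $x \mapsto |x|$ away from $0$ and under reflection, it suffices to bound $\Assouad \pi_\theta(F-F)$ from below and relate it to $\Assouad D(F)$.

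The concrete steps are as follows. First, by the definition of Assouad dimension there is a ``tangent'' scale structure witnessing $\Assouad F = s$; more simply, I would use the standard fact (see \cite{Fr}, or Fraser's book \cite{F}) that projections cannot decrease Assouad dimension too much — specifically, for a set $E \subset \mathbb{R}^d$ and \emph{almost every} direction $\theta$, or at least \emph{some} direction $\theta$, one has $\Assouad \pi_\theta(E) \ge \Assouad E / d$ is \emph{not} generally true; the correct elementary bound is that there exists $\theta$ with $\Assouad \pi_\theta(E) \ge (1/d)\Assouad E$ by a pigeonhole/covering argument: if every coordinate projection had Assouad dimension $< s/d$, then $E$ would be covered efficiently enough to force $\Assouad E < s$, a contradiction. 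Apply this to $E = F$ to obtain a coordinate direction (or a generic direction) $\theta$ with $\Assouad \pi_\theta(F) \ge \Assouad F / d =: t \in (0,1)$, where $t<1$ because $\Assouad F < d$. Second, apply Corollary \ref{main1cor} to the set $\pi_\theta(F) \subset \mathbb{R}$: since $0 < t \le \Assouad \pi_\theta(F) < 1$, we get $\Assouad \big(2\,\pi_\theta(F)\big) > \Assouad \pi_\theta(F) \ge t$; and by the Remark following Corollary \ref{main1cor}, the same strict inequality holds for the difference set, $\Assouad\big(\pi_\theta(F) - \pi_\theta(F)\big) > \Assouad \pi_\theta(F) \ge \Assouad F / d$. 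Third, note $\pi_\theta(F) - \pi_\theta(F) = \pi_\theta(F-F)$, which is a Lipschitz image of $F - F$, hence $\Assouad D(F) = \Assouad|F-F| = \Assouad (F-F) \ge \Assouad \pi_\theta(F-F) > \Assouad F/d$. (Here I use that $x\mapsto |x|$ is locally bi-Lipschitz away from $0$ and that $0$ contributes nothing to Assouad dimension, and that Lipschitz maps do not increase Assouad dimension.)

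The main obstacle is making the projection step genuinely quantitative and \emph{strict}: I need not just $\Assouad \pi_\theta(F) \ge \Assouad F/d$ for some $\theta$ but I must ensure the direction $\theta$ can be chosen so that $\pi_\theta(F)$ has Assouad dimension \emph{strictly} positive and strictly below $1$, so that Corollary \ref{main1cor} applies and delivers a \emph{strict} gain. Strict positivity is automatic once $\Assouad F > 0$ (the pigeonhole bound gives $\ge \Assouad F/d > 0$); the upper bound $< 1$ for \emph{some} good direction is more delicate — a priori a projection could have Assouad dimension $1$ even when $F$ does not — so I would instead argue that if $\Assouad \pi_\theta(F) \ge 1$ for the chosen $\theta$ then already $\Assouad D(F) \ge \Assouad \pi_\theta(F-F) = 1 > \Assouad F/d$ trivially, handling that case separately, and otherwise invoke Corollary \ref{main1cor}. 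This case split dissolves the obstacle. The remaining routine points — Lipschitz-stability of Assouad dimension, the pigeonhole projection bound, and the reflection/modulus invariance noted in the Remark — are standard and I would cite \cite{F, Fr} rather than reprove them.
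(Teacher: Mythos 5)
Your reduction breaks at the point where you pass from the projected difference set back to the distance set. The chain you rely on is $\Assouad D(F)=\Assouad|F-F|=\Assouad(F-F)\ge\Assouad \pi_\theta(F-F)$, and the first equality is simply false for $d\ge 2$: the modulus map $x\mapsto|x|$ from $\mathbb{R}^d$ to $[0,\infty)$ is not locally bi-Lipschitz away from the origin (it collapses entire $(d-1)$-spheres to points), and $D(F)\subset[0,\infty)$ always has Assouad dimension at most $1$ while $F-F$ can have dimension up to $d$; for instance $F=[0,1]^d$ gives $\Assouad(F-F)=d$ but $\Assouad D(F)=1$. So no lower bound for $\Assouad D(F)$ in terms of $\Assouad(F-F)$ or of $\Assouad\bigl(\pi_\theta(F)-\pi_\theta(F)\bigr)$ follows. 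This is not a repairable technicality within your scheme: as you yourself note, projected gaps $|\langle x-y,\theta\rangle|$ need not be realised distances in $F$, so neither $D(\pi_\theta F)\subseteq D(F)$ nor any dimension inequality between them is available, and the projection route never sees $D(F)$ at all. A secondary problem is your parenthetical ``Lipschitz maps do not increase Assouad dimension'': Assouad dimension is not stable under Lipschitz maps, and orthogonal projections can strictly increase it (e.g.\ a set of points $(q_k,2^{2^k})$ with $\{q_k\}$ the dyadic rationals has Assouad dimension $0$ but its first-coordinate projection has Assouad dimension $1$), so even the inequality $\Assouad(F-F)\ge\Assouad\pi_\theta(F-F)$ you invoke is unjustified. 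The pigeonhole step $\Assouad F\le\sum_i\Assouad\pi_{e_i}F$ is fine, but it cannot carry the argument on its own.

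The missing idea is a mechanism that ties the covering numbers of $F$ to a genuinely one-dimensional additive structure built from \emph{actual} distances. The paper's proof does this directly: it covers $B(x,R)\cap F$ by $r$-thin annuli centred at points of $F$ whose radii are indexed by an $r$-cover of $D(F)$, and intersects two such families of annuli (around two far-apart points of $F$), iterating this ``dimension reduction'' to get the non-strict bound $\Assouad F\le d\,\Assouad D(F)$; strictness is then obtained by covering a single annulus by long thin rectangles, observing that the centres of the $2r$-squares meeting $F$ inside one rectangle are collinear and that their difference set is comparable (at scale $r$) to a piece of $D(F)$, and applying Hochman's inverse theorem (via Lemma \ref{lma1} and the assumption $\Assouad D(F)<1$) to beat the trivial count on each rectangle. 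Some substitute for this annulus/rectangle analysis, connecting $D(F)$ itself to the sumset machinery, is what your proposal still needs.
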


This bound is new for sets with small Assouad dimension and for sets with large Assouad dimension the bound
\[
	\Assouad D(F)\geq\max\left\{\frac{6\Assouad F+2-3d}{4},\Assouad F-\frac{d-1}{2}\right\}
	\]
from \cite{Fr} is better, see Figure \ref{lowerplot} below for a depiction of the case when $d=3$.

\begin{figure}[h] 
	\caption{Lower bounds for the Assouad dimension of the distance set.}
\begin{centering}
\begin{tikzpicture}
[
  declare function={
    func(\x)= (\x < 1.5) * (\x-1)   +
              (\x >= 1.5) * (6/4*\x-7/4)
   ;
  }
]
\begin{axis}[xlabel = $\Assouad F$,ylabel = $\Assouad D(F)$,xmax=3,xmin=0,ymin=0,ymax=1, samples=50,legend pos=outer north east]
  \addplot[black, thick] (x,x/3);
  \addplot[dashed][black,   thick] {func(x)};
  \addlegendentry{our lower bound $\Assouad F /d$}
  \addlegendentry{lower bound from \cite{Fr}}
\end{axis}
\end{tikzpicture}
\end{centering}
\label{lowerplot}
\end{figure}

We also obtain a similar result for the upper box dimension.
 
\begin{thm}\label{distancethmB}
For $F\subset \mathbb{R}^d$, we have
\[
\uBoxd D(F)  \geq \frac{\uBoxd F}{d}.
\]
Moreover, if $\uBoxd F > 0$ and $\Assouad D(F) < 1$, then
\[
\uBoxd F < ( d - 1)  \uBoxd D(F) +  \Assouad D(F).
\]
\end{thm}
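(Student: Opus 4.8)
The plan is to reduce everything to statements about one-dimensional sumsets and then apply Corollary \ref{main1sym} (in its contrapositive form, via the inverse-theorem machinery of Section \ref{hochman}). First I would observe that the distance set $D(F)$ is closely related to the difference set $F-F \subseteq \R^d$ via the coordinate-wise comparison: for any two points $x,y \in F$ one has $|x_i - y_i| \le |x-y|$ for each coordinate $i$, and conversely $|x-y| \le \sqrt{d}\,\max_i |x_i-y_i|$. Writing $\pi_i$ for projection onto the $i$-th coordinate, this gives that the one-dimensional difference set $\pi_i(F) - \pi_i(F)$ is (up to the bi-Lipschitz factor $\sqrt d$ and taking absolute values) contained in an image of $D(F)$ under a Lipschitz map, while $F - F \subseteq \prod_i(\pi_i(F)-\pi_i(F))$. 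Combining with the fact that box and Assouad dimension are stable under Lipschitz images in the appropriate direction, and that dimensions of product sets are controlled by sums of the factor dimensions, one gets $\uBoxd F \le \sum_{i=1}^d \uBoxd(\pi_i(F)-\pi_i(F))$ and $\uBoxd(\pi_i(F)-\pi_i(F)) \le \uBoxd D(F)$ for each $i$, which already yields the first inequality $\uBoxd D(F) \ge \uBoxd F / d$. (The same chain with Assouad dimension in place of upper box dimension gives the analogous, already-known, bound used implicitly below.)

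For the second, sharper inequality I would argue by contradiction using the structure just set up. Suppose $\uBoxd F > 0$ and $\Assouad D(F) < 1$, and suppose for contradiction that $\uBoxd F \ge (d-1)\uBoxd D(F) + \Assouad D(F)$. From $\uBoxd F \le \sum_{i=1}^d \uBoxd(\pi_i(F)-\pi_i(F)) \le d\, \uBoxd D(F)$ together with the assumed lower bound on $\uBoxd F$, a pigeonhole/averaging argument on the $d$ coordinates forces at least one index $i_0$ with $\uBoxd(\pi_{i_0}(F)-\pi_{i_0}(F))$ very close to $\uBoxd D(F)$; more precisely the assumption rearranges to $\uBoxd D(F) - \Assouad D(F) \le \sum_{i}\big(\uBoxd D(F) - \uBoxd(\pi_i(F)-\pi_i(F))\big)$ only if... — here I would instead extract the cleaner consequence that for some $i_0$, both $\uBoxd(\pi_{i_0}(F)-\pi_{i_0}(F))$ is positive and equals $\uBoxd D(F)$ up to an amount that, crucially, is compatible with $\Assouad(\pi_{i_0}(F) - \pi_{i_0}(F)) \le \Assouad D(F) < 1$. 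Set $E = \pi_{i_0}(F)$. Then $E - E \subseteq \R$ has $\Assouad(E-E) \le \Assouad D(F) < 1$ (since $E-E$ is a Lipschitz image of a subset of $D(F)$ up to sign), in particular $\Assouad(E-E) < 1$, hence $\Assouad E \le \Assouad(E-E) < 1$ as well, because $E - t \subseteq E - E$ for any fixed $t \in E$.

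Now apply Corollary \ref{main1sym} to $E$: if $0 < \uBoxd E < 1$ and $\Assouad E < 1$, then $\uBoxd E < \uBoxd 2E = \uBoxd(E+E)$, and since $E - E$ and $E + E$ have the same dimensions (replace $E$ by $-E$, which does not change any dimension — cf. the Remark after Corollary \ref{main1cor}), we get $\uBoxd E < \uBoxd(E - E) \le \uBoxd D(F)$. Feeding this strict gain back through $\uBoxd F \le \uBoxd(E_{i_0} - E_{i_0}) + \sum_{i \ne i_0}\uBoxd(\pi_i(F)-\pi_i(F)) \le \uBoxd D(F) + (d-1)\uBoxd D(F)$ is not yet a contradiction, so the real point is to track the strict inequality quantitatively: $\uBoxd F \le \uBoxd E + (d-1)\uBoxd D(F) < \uBoxd D(F) + (d-1)\uBoxd D(F)$? — no; rather I need $\uBoxd E \le \uBoxd D(F)$ combined with the strict gain to show $\uBoxd F$ is \emph{strictly} below $(d-1)\uBoxd D(F) + \Assouad D(F)$, which means I should choose $i_0$ to be the coordinate realizing the near-equality in the pigeonhole step and then use that $\uBoxd(\pi_{i_0}(F) - \pi_{i_0}(F)) = \uBoxd(E - E) = \uBoxd(E+E) > \uBoxd E$, so that $\uBoxd E < \uBoxd(\pi_{i_0}(F)-\pi_{i_0}(F))$ and hence $\uBoxd F \le \uBoxd E + (d-1)\uBoxd D(F) < \uBoxd(\pi_{i_0}(F) - \pi_{i_0}(F)) + (d-1)\uBoxd D(F) \le d\,\uBoxd D(F)$; pushing the $\Assouad D(F)$ term in requires using that $\uBoxd E \le \Assouad E \le \Assouad D(F)$, giving directly $\uBoxd F \le \uBoxd E + (d-1)\uBoxd D(F) \le \Assouad D(F) + (d-1)\uBoxd D(F)$, and strictness comes from the case analysis on whether $\uBoxd E > 0$.

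The main obstacle I anticipate is the bookkeeping in the contradiction step: isolating the single "good" coordinate $E = \pi_{i_0}(F)$ and arguing that its difference set simultaneously (a) has upper box dimension essentially as large as $\uBoxd D(F)$, (b) has Assouad dimension $< 1$, and (c) has $\uBoxd E$ strictly smaller than $\uBoxd(E-E)$ via Corollary \ref{main1sym}, with the strictness being enough to close the gap against the $\Assouad D(F)$ term. A secondary technical point is being careful that all the Lipschitz-image and product-set dimension inequalities go in the correct direction for box and Assouad dimensions (both behave well under Lipschitz maps in the relevant sense, and both satisfy $\dim(A \times B) \le \dim A + \dim B$ for these particular dimensions, though for Assouad the product bound needs the version with a box-dimension factor — which is why the hypothesis mixes the two dimensions). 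One should also handle the degenerate case $\uBoxd E = 0$ separately, where one instead argues that $\uBoxd F$ must be carried entirely by the other $d-1$ coordinates, each contributing at most $\uBoxd D(F)$, giving $\uBoxd F \le (d-1)\uBoxd D(F) < (d-1)\uBoxd D(F) + \Assouad D(F)$ immediately since $\Assouad D(F) > 0$ (as $\uBoxd F > 0$ forces $D(F)$ nontrivial).
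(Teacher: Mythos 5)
Your reduction to coordinate projections breaks down at its very first step, and this step carries both halves of the proposal. From $|x_i-y_i|\le |x-y|\le \sqrt d\,\max_j|x_j-y_j|$ you conclude that $\pi_i(F)-\pi_i(F)$ is (up to sign and a factor $\sqrt d$) contained in a Lipschitz image of $D(F)$, and hence that $\uBoxd(\pi_i(F)-\pi_i(F))\le \uBoxd D(F)$ and $\Assouad(\pi_i(F)-\pi_i(F))\le \Assouad D(F)$. No such Lipschitz map exists: a single value $t\in D(F)$ is realised by many pairs $(x,y)\in F\times F$ whose coordinate differences $|x_i-y_i|$ can be anything in $[0,t]$, so $|x_i-y_i|$ is not a function of $|x-y|$ at all, and pointwise inequalities between the two quantities give no covering-number comparison between the two sets. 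Comparing a projection of the difference set with the Euclidean distance set is precisely the hard content here and cannot be obtained by soft Lipschitz/product bookkeeping; the paper avoids it entirely and instead proves $N(F,r)\le c_d\,N(D(F),r)^d$ directly: a pigeonhole over pairs of $r$-cubes produces a popular cube $K_0$ of $D(F)$ and a centre $x$ whose $r$-thin annulus meets many $r$-cubes of $F$; intersecting such annuli $(d-1)$ times (``dimension reduction'') leaves many $r$-cubes of $F$ in an $r$-neighbourhood of a $1$-sphere, and the distances from one of these points to the others form an $r$-separated subset of $D(F)$. That gives the first inequality, and the second is obtained by straightening the near-circular piece into almost-line segments and applying Hochman's inverse theorem there (as in the proof of Theorem \ref{distancethmA}), which is where the hypothesis $\Assouad D(F)<1$ enters.

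Even granting your comparison, the second half does not close. The product bound you have gives $\uBoxd F\le \uBoxd(E-E)+\sum_{i\ne i_0}\uBoxd\bigl(\pi_i(F)-\pi_i(F)\bigr)$ with $E=\pi_{i_0}(F)$; to bring in $\Assouad D(F)$ you silently replace $\uBoxd(E-E)$ by $\uBoxd E$ and use $\uBoxd E\le\Assouad E\le\Assouad D(F)$, but $\uBoxd(E-E)\ge\uBoxd E$ goes the wrong way, and invoking Corollary \ref{main1sym} (or its difference-set variant) only widens that gap rather than closing it. Your own write-up records this (``is not yet a contradiction'', ``no; rather''), and no strict inequality $\uBoxd F<(d-1)\uBoxd D(F)+\Assouad D(F)$ is ever actually derived, nor is the pigeonhole selection of $i_0$ made precise. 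So both the first inequality and the strict second inequality remain unproved as written.
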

Depending on the Assouad dimension of the distance set, the second inequality can be better or worse than the first one. Curiously, if one considers the distance set with respect to the supremum norm, where the unit ball is a square, our methods show that if $\uBoxd F > 0$  and $\Assouad D(F)<1$, then
\[
\uBoxd D(F)  > \frac{\uBoxd F}{d}.
\]

\section{Hochman's inverse theorem and entropy}\label{hochman}
To properly state Hochman's inverse theorem some definitions are needed, notably entropy and the uniformity and atomicity of measures. Thereafter several technical lemmas relating entropy and covering numbers will be discussed. 

\begin{defn}[Dyadic intervals and restrictions of measures]\label{DY}
	For any integer $n\geq 0$, the set of level $n$ dyadic intervals is
	\[
	\mathcal{D}_n=\left\{D_n(k)=[k2^{-n},(k+1)2^{-n}): k\in\mathbb{N}, 0\leq k\leq 2^n-1\right\}.
	\]
	
	For any measure $\mu$ on the real line, $x\in [0,1)$ and $n\in\mathbb{N}$, define $D(x,n)$ to be the unique dyadic interval of level $n$ which contains $x$ and $T^{D(x,n)}$ to be the unique orientation preserving affine map taking $D(x,n)$ to $[0,1]$. For $x,n$ such that $\mu(D(x,n))>0$, we write
	\[
	\mu_{D(x,n)}=\mu_{x,n}=\frac{1}{\mu(D(x,n))}\mu \vert_{D(x,n)}
    \]
and
    \[
    \mu^{D(x,n)}=\mu^{x,n}=T^{D(x,n)}\mu_{D(x,n)}.
	\]
\end{defn}

We will use both $\mu^{D(x,n)}$ and $\mu^{x,n}$ interchangeably, often the choice of notation will be picked to emphasise the object studied, be it a point or an interval.

\begin{defn}[Entropy]
	Given a probability measure $\mu$ on $[0,1]$, we define the $n$-level entropy to be
	\[
	H(\mu,\mathcal{D}_n)=-\sum_{E\in \mathcal{D}_n} \mu(E)\log \mu(E),
	\] 
	where we assume $0\log 0$ to be $0$. The averaged $n$-level entropy is then defined to be
	\[
	H_n(\mu)=\frac{1}{n\log 2}H(\mu,\mathcal{D}_n).
	\]
\end{defn}
 
 \begin{defn}\label{uniform-atom}
 	For a probability measure $\mu$ on $[0,1]$ and two numbers, $\varepsilon\in [0,1]$ and $m \in \mathbb{N}$, we say that $\mu$ is $(\varepsilon,m)$-uniform if 
 	\[
 	H_m(\mu)\geq 1-\varepsilon.
 	\]
 	We say that $\mu$ is $(\varepsilon,m)$-atomic if
 	\[
 	H_m(\mu)\leq \epsilon.
 	\]
 \end{defn}

Hochman's inverse theorem can now be stated as introduced in \cite{Hoch}. This result and its proof are discussed in further detail in the survey \cite{Hoch2} and the lecture notes \cite{Sa}.

\begin{thm}[Theorem 4.11 \cite{Hoch}]\label{HochmanEntropy}
	For any $\varepsilon>0$ and integer $m$, there exists  $\delta=\delta(\varepsilon,m)$ and $n_0=n_0(\varepsilon,m,\delta)$ such that for any $n>n_0$ and any probability measures $\mu, \nu$ on $[0,1]$, either $H_n(\mu*\nu)\geq H_n(\mu)+\delta$ or there exist disjoint subsets $I,J\subset\{0,\dots ,n\}$ with $\#(I\cup J)\geq (1-\varepsilon)n$ and
	\[
	\mu (\{ x \in [0,1] :  \mu^{x,k} \text{ is } (\varepsilon,m)\textup{-uniform}\})>1-\varepsilon, \text{ if }k\in I
	\]
	\[
	\nu(\{ x \in [0,1] : \nu^{x,k} \text{ is } (\varepsilon, m)\textup{-atomic}\})>1-\varepsilon, \text{ if }k\in J.
	\]
\end{thm}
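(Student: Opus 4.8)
The plan is to follow Hochman's original argument from \cite{Hoch}, which reduces the statement to a finitary inverse theorem for entropy combined with a multiscale analysis; I would not reproduce the full proof but would cite \cite{Hoch, Hoch2, Sa}. The starting point is the decomposition of normalized entropy into scales: for any probability measure $\mu$ on $[0,1]$ one has, up to an additive error $O(1/n)$, that $H_n(\mu)$ equals the average over $k \in \{0,\dots,n-1\}$ of $\mathbb{E}_{x \sim \mu}\big[H_1(\mu^{x,k})\big]$, and the entropy of a convolution can be compared scale by scale to the entropies of the convolutions $\mu^{x,k} * \nu^{y,k}$ of component measures. The key algebraic inputs are that convolution never decreases entropy and a \emph{quantitative} inverse theorem: if the scale-one entropy of $\mu^{x,k} * \nu^{y,k}$ exceeds that of $\mu^{x,k}$ by less than a threshold $\delta'(\varepsilon,m)$, then after discretizing at level $m$ either $\mu^{x,k}$ is $(\varepsilon,m)$-uniform or $\nu^{y,k}$ is $(\varepsilon,m)$-atomic.

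With these in hand I would argue the dichotomy directly. Suppose the first alternative fails, i.e. $H_n(\mu * \nu) < H_n(\mu) + \delta$. Using the scale decomposition, the total entropy gain $H_n(\mu * \nu) - H_n(\mu)$ is, up to controlled errors, the average over scales $k$ and over $\mu \times \nu$-typical pairs $(x,y)$ of the nonnegative local gains $H_1(\mu^{x,k} * \nu^{y,k}) - H_1(\mu^{x,k})$. Since the average of these nonnegative quantities is smaller than $\delta$, a Markov/pigeonhole argument shows that for all but at most $\varepsilon n$ scales $k$ the local gain is below $\delta'$ for a set of pairs of measure $> 1 - \varepsilon$. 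For each such good scale the finitary inverse theorem applies, and splitting the good scales according to which of the two conclusions holds for a $(1-\varepsilon)$-proportion of base points produces the disjoint index sets $I$ and $J$ with $\#(I \cup J) \ge (1-\varepsilon)n$, together with the two displayed measure estimates; choosing $\delta$ small and $n_0$ large enough relative to $\varepsilon, m$ absorbs all error terms.

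The main obstacle is exactly this last step of bookkeeping: one must track how the $O(1/n)$ errors in the entropy decomposition, the parameters of the finitary inverse theorem, and the thresholds in the definitions of $(\varepsilon,m)$-uniform and $(\varepsilon,m)$-atomic propagate through the scale-averaging, which is why the quantifier order \emph{first $\varepsilon, m$, then $\delta$, then $n_0$} is essential. A secondary technical point is that component measures live on dyadic subintervals that do not interact cleanly under addition because of carries, so one works with the rescaled components $\mu^{x,k}$ on $[0,1]$ and controls the discrepancy between $\mathcal{D}_{k+m}$ restricted to $D(x,k)$ and the pullback of $\mathcal{D}_m$. Since the statement above is precisely Theorem 4.11 of \cite{Hoch}, in the paper itself I would simply attribute it there and proceed.
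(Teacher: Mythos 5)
The paper does not prove this statement at all: it is quoted verbatim as Theorem 4.11 of \cite{Hoch}, with pointers to \cite{Hoch2} and \cite{Sa} for the proof, and your proposal does exactly the same thing, attributing it to Hochman and deferring to his argument. Your sketch of the multiscale entropy decomposition and the local inverse theorem is a reasonable outline of how Hochman proves it, so your treatment is essentially identical to the paper's.
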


We wish to study sets, not measures. To do this we need to link the entropy of a measure to the covering number of the support of the measure. We will do this in two ways. The first idea is to find an analogous definition for $(\varepsilon,m)$-uniformity of a set. This is possible since compact subsets of $\mathbb{R}$ are in 1-1 correspondence with subsets of the full binary tree in a canonical way which we   describe below. The second will be to consider the covering number of a set supposing that the uniform measure is sufficiently full branching or atomic.

We identify $D_n(i)$ with the $i$th vertex at the $n$th level of the standard infinite binary tree (where we count vertices in a given level from left to right). Observe that if $D_n(i)\cap F\neq\emptyset$ then at least one of the dyadic intervals $D_{n+1}(2i)$ or $D_{n+1}(2i+1)$ intersects $F$ and all of the dyadic intervals containing $D_n(i)$ also intersect $F$. Therefore the vertices of the infinite binary tree for which $D_n(i) \cap F \neq \emptyset$ give rise to a subtree $T$ which describes the distribution of $F$. We say a dyadic interval $D_n(i)$ is a \emph{descendant} of another dyadic interval $D_m(j)$ if $D_n(i) \subset D_m(j)$ and pass this terminology to the vertices of $T$ by the above association. Similarly a vertex is a level $n$ vertex if it is associated with a dyadic interval $D_n(i)$ for some $i$.  We shall call $T$ the \emph{tree associated} with $F$ and denote it by $T_F$. Understanding properties of this tree will give us direct information about coverings of $F$ by dyadic intervals.

The analogue of $(\varepsilon,m)$-uniform in terms of our tree is the following. We say $T$ is \emph{$(\varepsilon,m)$-full branching at vertex $D_k(i)$} if $D_k(i)$ has at least $2^{(1-\varepsilon) m}$ descendants $m$ levels below, that is, $F$ intersects at least $2^{(1-\varepsilon) m}$ many level $k+m$ dyadic intervals contained in $D_k(i)$. An analogue of $(\varepsilon,m)$-atomic does exist, however it is not needed in this paper since we consider more regular sets when looking at $(\varepsilon,m)$-atomic measures and the measure will thus provide direct information about the covering number, see Lemma \ref{MoranLower}. 

We will now show how full branching for measures implies full branching for sets. To do so we need the following result, which will be used extensively in this article and can be found in \cite{CT}.

\begin{lma}\label{EntropyHigh}
	Let $A$ be a finite set then for any probability measure $\mu$ on $A$ we have the following inequality
	\[
	0\leq -\sum_{a\in A} \mu(\{a\})\log \mu(\{a\})\leq \log\#A.
	\]
	The maximal value is attained when $\mu$ is uniform on its support, that is
	\[
    \mu(\{a\})=
    \begin{cases}
    \frac{1}{\#A}  \quad &a\in A \\
    0 \quad &\text{otherwise}
    \end{cases}
	\]
	The minimal value $0$ is attained when $\mu$ is supported on a single point.
\end{lma}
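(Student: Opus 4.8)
The statement to prove is the entropy inequality $0 \le -\sum_{a \in A} \mu(\{a\}) \log \mu(\{a\}) \le \log \#A$, with the characterization of the extremal cases. This is the classical bound on Shannon entropy, so the plan is to give the standard convexity argument.

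\medskip

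First I would dispense with the lower bound: each term $-\mu(\{a\})\log\mu(\{a\})$ is nonnegative because $\mu(\{a\}) \in [0,1]$ implies $\log \mu(\{a\}) \le 0$, with the convention $0\log 0 = 0$ handling the boundary. Equality in the sum forces every term to vanish, i.e. $\mu(\{a\}) \in \{0,1\}$ for all $a$, which (since $\mu$ is a probability measure on the finite set $A$) means $\mu$ is a point mass. For the upper bound, the cleanest route is Jensen's inequality applied to the strictly concave function $\log$. Writing $S = \{a \in A : \mu(\{a\}) > 0\}$ for the support and applying Jensen to the points $1/\mu(\{a\})$ weighted by $\mu(\{a\})$,
\[
-\sum_{a \in S} \mu(\{a\}) \log \mu(\{a\}) = \sum_{a \in S} \mu(\{a\}) \log \frac{1}{\mu(\{a\})} \le \log\left( \sum_{a \in S} \mu(\{a\}) \cdot \frac{1}{\mu(\{a\})} \right) = \log \#S \le \log \#A.
\]
An equivalent and perhaps more transparent alternative is to compute the relative entropy against the uniform measure $u$ on $A$: one has $\log \#A - H(\mu) = \sum_{a} \mu(\{a\}) \log(\mu(\{a\}) \#A) = D(\mu \,\|\, u) \ge 0$ by the inequality $t \log t \ge t - 1$ (equivalently $\log t \le t-1$) summed appropriately, and this is $\ge 0$ with equality iff $\mu(\{a\}) \#A = 1$ for all $a$ in the support and the support is all of $A$.

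\medskip

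For the equality characterization in the upper bound, I would use strict concavity of $\log$ (or strict convexity of $t \mapsto t\log t$): equality in Jensen forces $1/\mu(\{a\})$ to be constant over the support $S$, so $\mu$ is uniform on $S$, and equality in $\#S \le \#A$ forces $S = A$. Hence the maximum $\log \#A$ is attained exactly when $\mu$ is the uniform measure on all of $A$. Conversely, plugging $\mu(\{a\}) = 1/\#A$ into the sum gives $\sum_{a\in A} \frac{1}{\#A} \log \#A = \log \#A$, confirming attainment.

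\medskip

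I do not anticipate any genuine obstacle here — this is a textbook fact (indeed the excerpt cites \cite{CT}), and the only mild care needed is the bookkeeping around the convention $0 \log 0 = 0$ so that the support restriction is legitimate, and making sure the strictness of concavity is invoked correctly to pin down the equality cases. If one wanted to avoid quoting Jensen, the inequality $\log t \le t - 1$ together with the substitution $t = 1/(\#A\,\mu(\{a\}))$ on the support gives an entirely elementary one-line derivation, and that is probably the version I would write out in full.
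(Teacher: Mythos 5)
Your argument is correct and complete: the nonnegativity of each term handles the lower bound, Jensen's inequality (or equivalently the relative-entropy/$\log t \le t-1$ computation) gives the upper bound $\log\#A$, and strict concavity pins down the equality cases exactly as needed. The paper itself offers no proof of this lemma, simply citing \cite{CT}, and what you have written is precisely the standard textbook argument found there, so there is nothing to reconcile.
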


Let $\varepsilon, m$ be as in Definition \ref{uniform-atom}, $x \in \mathbb{R}$ and $k\in \mathbb{N}$ and $i$ be such that $x\in D_k(i)$. If a measure $\mu$ is such that $\mu^{x,k}$ is $(\varepsilon,m)$-uniform then by definition $H_m(\mu^{x,k})\ge 1- \varepsilon$. So
\begin{align*}
-\sum_{D\in \mathcal{D}_m} \mu^{x,k}(D) \log \mu^{x,k}(D) &\ge m (1-\varepsilon)\log 2
\end{align*}
and by Lemma \ref{EntropyHigh}
\[
m(1-\varepsilon)\log 2 \le - \sum_{D\in \mathcal{D}_m} \mu^{x,k}(D) \log \mu^{x,k}(D) \le \log N(\text{supp}( \mu^{x,k} ), 2^{-m}).
\]
Thus $N(\text{supp}(\mu^{x,k}), 2^{-m}) \ge 2^{(1-\varepsilon)m}$ and the tree associated with the support of $\mu$ is $(\varepsilon,m)$-full branching at $D_k(i)$.

Thus high entropy implies high covering number. The other direction is in general not true. When $\mu$ is $(\varepsilon,m)$-atomic, $N(\text{supp}( \mu), 2^{-m})$ can be large. However, the measure at scale $2^{-m}$ must be very non-uniform. 

The following lemma is the key to our second idea, heuristically saying that if entropy is low (or large) on a sufficient portion of scales then the covering number of the whole set at one specific scale will be low (or large).

\begin{lma}\label{EntropyToCovering}[Entropy and covering number]
	Let $F$ be a $2^{-n}$-separated finite subset of $[0,1]$, $\varepsilon \in [0,1]$ and $m\in \mathbb{N}$. Let $\mu$ be the uniform probability measure on $F$ and suppose that  
\[
\mu (\{ x \in [0,1] :  \mu^{x,i} \text{ is } (\varepsilon,m)\textup{-atomic}\})>1-\varepsilon
\]
for all $i\in I\subset \{0,\dots,n\}$ with $\#I\geq (1-\varepsilon)n$. Then
\[
N(F,2^{-n})\leq 2^{5\varepsilon n}.
\]
Similarly, suppose that
\[
\mu (\{ x \in [0,1] :  \mu^{x,i} \text{ is } (\varepsilon,m)\textup{-uniform}\})>1-\varepsilon
\]
for $i\in J\subset \{0,\dots,n\}$ with $\#J\geq (1-\varepsilon)n$. Then we have
\[
N(F,2^{-n})\geq 2^{(1-\varepsilon)^3 n}.
\]
\end{lma}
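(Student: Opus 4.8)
\textbf{Proof plan for Lemma \ref{EntropyToCovering}.}

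The plan is to exploit the fact that $\mu$ is the uniform measure on a $2^{-n}$-separated set, so that $N(F,2^{-n}) = \#F$ and, more importantly, $H_n(\mu) = \frac{\log \#F}{n\log 2}$, i.e.\ $N(F,2^{-n}) = 2^{n\log 2 \cdot H_n(\mu)}$. Thus everything reduces to controlling the global averaged entropy $H_n(\mu)$ from above (in the atomic case) or below (in the uniform case) using the hypothesis on conditional measures at most scales $i$. The workhorse is the entropy decomposition along the dyadic filtration: for any $i \le n$,
\[
H(\mu,\mathcal{D}_n) = H(\mu,\mathcal{D}_i) + \sum_{D \in \mathcal{D}_i} \mu(D)\, H(\mu_D, \mathcal{D}_n),
\]
and more generally, telescoping over a chain of scales $0 = i_0 < i_1 < \dots < i_\ell = n$ one obtains $H(\mu,\mathcal{D}_n)$ as a sum over increments, each increment being an average (over the relevant dyadic cells) of the entropy $H(\mu^{x,i}, \mathcal{D}_{i'-i})$ of a rescaled conditional measure; here is where I would invoke Definition \ref{DY} to rewrite $H(\mu_D,\mathcal{D}_{i+m})$ in terms of $H_m(\mu^{x,i})$. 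The point is that each scale-$i$ increment (for a step of size $m$, or size $1$ when $i \notin I$ or $J$) contributes at most $m\log 2$ and at least $0$, by Lemma \ref{EntropyHigh}.

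For the atomic bound: split $\{0,\dots,n-1\}$ into blocks of length $m$ starting at each $i \in I$ where possible, and singletons elsewhere. For a block starting at $i \in I$, the conditional-measure hypothesis says $\mu^{x,i}$ is $(\varepsilon,m)$-atomic (so $H_m(\mu^{x,i}) \le \varepsilon$) for a $\mu$-mass $>1-\varepsilon$ of $x$; on the exceptional $x$'s we bound $H_m \le 1$ trivially. So the averaged contribution of such a block is $\le m\log 2 \,(\varepsilon + \varepsilon) = 2\varepsilon m \log 2$. The scales $i \notin I$, of which there are at most $\varepsilon n$, contribute at most $\log 2$ each, i.e.\ $\le \varepsilon n \log 2$ total; and I also lose a bounded number of scales near $n$ where a full block of length $m$ does not fit (at most $m$ scales, contributing $\le m\log 2$), plus some overlap bookkeeping from blocks that are interrupted by a scale not in $I$. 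Collecting, $H(\mu,\mathcal{D}_n) \le (2\varepsilon n + \varepsilon n + O(m))\log 2$, so $H_n(\mu) \le 3\varepsilon + o(1) \le 5\varepsilon$ for $n$ large (absorbing the $m$-dependent error into the slack $5\varepsilon - 3\varepsilon = 2\varepsilon$, legitimate since $n$ may be taken large relative to $m$, $\varepsilon$), giving $N(F,2^{-n}) = 2^{n\log 2 \cdot H_n(\mu)} \le 2^{5\varepsilon n}$.

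For the uniform bound: now use $H(\mu,\mathcal{D}_n) \ge \sum$ (increments) and discard the increments we cannot control, keeping only blocks of length $m$ starting at scales $i \in J$; arrange these to be disjoint, so at least $\lfloor (1-\varepsilon)n / m \rfloor$ of them, wait---more carefully, since $\#J \ge (1-\varepsilon)n$, a greedy disjoint selection of length-$m$ blocks anchored in $J$ yields at least $((1-\varepsilon)n - m)/m$ blocks minus a correction, but a cleaner route is: after removing the $\le \varepsilon n$ bad scales, the good scales form at most $\varepsilon n + 1$ runs; in each run of length $L$ we fit $\ge L/m - 1$ disjoint blocks; summing, the total number of disjoint good blocks is $\ge (1-\varepsilon)n/m - (\varepsilon n + 1)$. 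Each such block contributes $\ge m\log 2\,(1-\varepsilon)(1-\varepsilon) = (1-\varepsilon)^2 m\log 2$, since on a $\mu$-mass $>1-\varepsilon$ the conditional measure has $H_m \ge 1-\varepsilon$ and elsewhere we bound below by $0$. Hence $H(\mu,\mathcal{D}_n) \ge \big((1-\varepsilon)n - m(\varepsilon n+1)\big)(1-\varepsilon)^2 \log 2$, and for $n$ large (so the $m(\varepsilon n + 1)$ term is dominated by, say, an extra factor $(1-\varepsilon)$) this gives $H_n(\mu) \ge (1-\varepsilon)^3$, whence $N(F,2^{-n}) \ge 2^{(1-\varepsilon)^3 n}$.

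The main obstacle is the bookkeeping at the boundaries of the scale blocks: the hypotheses only give information at scales $i \in I$ (resp.\ $J$), but entropy increments naturally come in $m$-blocks, so one must carefully tile $\{0,\dots,n\}$ by $m$-blocks anchored at good scales, account for the at-most-$\varepsilon n$ bad scales, the leftover scales near $n$, and the fact that a block anchored at a good scale may straddle a bad scale. All of these errors are $O(m)$ or $O(\varepsilon n)$ and get absorbed into the gap between $3\varepsilon$ and $5\varepsilon$ (resp.\ between $(1-\varepsilon)^2$ and $(1-\varepsilon)^3$) once $n$ is taken sufficiently large; making this absorption quantitative and choosing the block decomposition so that the constants genuinely come out as $5\varepsilon$ and $(1-\varepsilon)^3$ is the only delicate point.
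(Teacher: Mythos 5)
Your overall strategy is exactly the paper's: since $\mu$ is uniform on a $2^{-n}$-separated set, $\log N(F,2^{-n})=\log\#F=H(\mu,\mathcal{D}_n)$, and one telescopes $H(\mu,\mathcal{D}_n)=\sum_{j=0}^{n-1}H(\mu,\mathcal{D}_{j+1}\mid\mathcal{D}_j)$, grouping the increments into $m$-blocks anchored at good scales and using $\sum_{j=i}^{i+m-1}H(\mu,\mathcal{D}_{j+1}\mid\mathcal{D}_j)=\int H(\mu^{x,i},\mathcal{D}_m)\,d\mu(x)$ together with the $(\varepsilon,m)$-atomic/uniform hypothesis on a $\mu$-mass $>1-\varepsilon$ and the trivial bounds $0\le H(\mu^{x,i},\mathcal{D}_m)\le m\log 2$ elsewhere. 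Your atomic half is essentially the paper's computation (block contribution $\le 2\varepsilon m\log 2$, at most $n/m+1$ blocks, plus $\le\varepsilon n\log 2$ from scales outside $I$), and the $O(m)$ boundary losses are harmless there because $5\varepsilon n$ leaves slack of order $\varepsilon n$.

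In the uniform half, however, your "cleaner route" has a genuine gap. Counting only blocks that fit inside maximal runs of good scales gives at least $(1-\varepsilon)n/m-(\varepsilon n+1)$ blocks, hence $H(\mu,\mathcal{D}_n)\ge(1-\varepsilon)^3 n\log 2-(\varepsilon n+1)m(1-\varepsilon)^2\log 2$. The deficit $(\varepsilon n+1)m(1-\varepsilon)^2\log 2$ is \emph{linear in $n$} (of size $\asymp m\varepsilon n$), so it cannot be absorbed by taking $n$ large, contrary to your claim; absorbing it into the gap between $(1-\varepsilon)^2$ and $(1-\varepsilon)^3$ would require $m(\varepsilon n+1)\le\varepsilon(1-\varepsilon)n$, i.e.\ essentially $m\le 1$. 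For general $m$ and $\varepsilon$ (the lemma imposes no relation between them) your bound can even become vacuous. The repair is the one the paper uses, and which you mention and then discard: blocks need not avoid bad scales at all, because the hypothesis is only used at the anchor scale $i\in J$ — the identity above and the lower bound $\int H(\mu^{x,i},\mathcal{D}_m)\,d\mu(x)\ge(1-\varepsilon)\cdot(1-\varepsilon)m\log 2$ hold regardless of whether $i+1,\dots,i+m-1$ lie in $J$. Greedily covering $J$ by disjoint intervals $[i,i+m]$ with $i\in J$ yields at least $\#J/(m+1)\ge(1-\varepsilon)n/(m+1)$ blocks, whence $H(\mu,\mathcal{D}_n)\ge(1-\varepsilon)^3\frac{m}{m+1}\,n\log 2$, which is the stated bound up to the same innocuous rounding the paper itself commits in asserting "at least $(1-\varepsilon)n/m$ intervals". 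So: same method as the paper, atomic case fine, but the uniform case as written does not prove the stated inequality and needs the block-selection corrected.
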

\begin{proof}
	We assume $\mu$ satisfies the first condition and shall compute $H(\mu,\mathcal{D}_n)$. First notice that
	\[
	H(\mu,\mathcal{D}_n)=H(\mu,\mathcal{D}_0)+\sum_{i=0}^{n-1}(H(\mu,\mathcal{D}_{i+1})-H(\mu,\mathcal{D}_{i})).
	\]
	Write $H(\mu,\mathcal{D}_{i+1}|\mathcal{D}_{i})=H(\mu,\mathcal{D}_{i+1})-H(\mu,\mathcal{D}_{i})$ for the conditional entropy. When $i\in I$ we see that
	\begin{align*}
\sum_{j=i}^{i+m-1} H(\mu,\mathcal{D}_{i+1}|\mathcal{D}_{i})&=\int H(\mu^{x,i},\mathcal{D}_m)d\mu(x)\\
&=\int_{x : \, H(\mu^{x,i},\mathcal{D}_m)\leq \varepsilon}H(\mu^{x,i},\mathcal{D}_m)d\mu(x) +\int_{x: \, H(\mu^{x,i},\mathcal{D}_m)>\varepsilon} H(\mu^{x,i},\mathcal{D}_m)d\mu(x).	
	\end{align*}

	Notice that $H(\mu^{x,i},\mathcal{D}_m)\leq m\log 2$ for all $x$. Then we see that
	\[
	\sum_{j=i}^{i+m-1} H(\mu,\mathcal{D}_{i+1}|\mathcal{D}_{i})\leq \varepsilon\mu(\{x\colon H(\mu^{x,i},\mathcal{D}_m)\leq \varepsilon\})+m  \log 2 \left(\mu(\{x \colon H(\mu^{x,i},\mathcal{D}_m)> \varepsilon\}) \right)
	\]
	and by our assumption
    \[
    \varepsilon\mu(\{x\colon H(\mu^{x,i},\mathcal{D}_m)\leq \varepsilon \})+m\log 2  \left(\mu( \{ 
x \colon H(\mu^{x,i},\mathcal{D}_m )> \varepsilon \}) \right)
< \varepsilon+m\varepsilon\log 2.
    \]
    
	When $i\notin I$ we only have the following trivial bound
	\[
	H(\mu,\mathcal{D}_{i+1}|\mathcal{D}_{i})\leq \log 2.	
	\]
	Now we can cover $I$ with disjoint intervals of form $[i,i+m]$ for $i\in I$ by a greedy covering procedure. Let $i_1$ be the smallest number in $I$ and we pick the interval $[i_1,i_1+m].$ Then we choose the smallest number $i_2$ in $I$ which is larger than $i_1+m$ and we pick the interval $[i_2,i_2+m].$ We can iteratively apply the above argument until we have covered all elements in $I$. There are at most $n/m+1$ intervals needed in this cover. The cardinality of the uncovered subset of $[1,\dots,n]$ is bounded above by the cardinality of $[1,\ldots, n]\setminus I $, so is at most $\varepsilon n$.
	Therefore we see that
	\[
	H(\mu,\mathcal{D}_n)\leq \left(\frac{n}{m}+1\right) (\varepsilon+\varepsilon m\log 2)+\varepsilon n \log 2\leq 5\varepsilon n\log 2.
	\]
	As $\mu$ is uniform on $F$ and $F$ is $2^{-n}$ separated we see that
	\[
	\log N(F,2^{-n})=\log \#F=H(\mu,\mathcal{D}_n)\le 5\varepsilon n\log 2.
	\]
	Therefore
	\[
	N(F,2^{-n})\leq 2^{5\varepsilon n}.
	\]
	This proves the first part of the lemma.
    
    The second part can be proved in a similar manner by breaking the integral in the following slightly different way. For each $i\in J$ we have the following equality,
	\begin{align*}
	\sum_{j=i}^{i+m-1} H(\mu,\mathcal{D}_{i+1}|\mathcal{D}_{i})&=\int H(\mu^{x,i},\mathcal{D}_m)d\mu(x)\\
    &=\int_{x: H(\mu^{x,i},\mathcal{D}_m)\leq m(1-\varepsilon)}H(\mu^{x,i},\mathcal{D}_m)d\mu(x)+\int_{x: H(\mu^{x,i},\mathcal{D}_m)>m(1-\varepsilon)}H(\mu^{x,i},\mathcal{D}_m)d\mu(x).
	\end{align*}
	The first term on the right can be trivially bounded below by $0$ and the second term can be bounded from below by $(1-\varepsilon)^2m\log 2$. Then we can cover $J$ with disjoint intervals of the form $[i,i+m]$ with $i\in J$ as above, using at least $(1-\varepsilon) n /m$ intervals for this cover. From here the result follows since
	\[
	H(\mu,\mathcal{D}_n)\geq \frac{(1-\varepsilon) n}{m} (1-\varepsilon)^2m\log 2= (1-\varepsilon)^3n\log 2
	\]
and therefore $N(F,2^{-n}) \geq 2^{(1-\varepsilon)^3n}$, which completes the proof. 
\end{proof}
Finally note that we will often  consider finite approximations of sets. Given a set $F\subset [0,1]$ and an integer $n$, we define the $2^{-n}$ discretization of $F$ to be the following set
\[
F(n)=\left\{a :[a,b)\in \mathcal{D}_n \text{ and } [a,b) \cap F \neq\emptyset \right\}.
\]
Notice that $F(n)$ might not be a subset of $F$. However, their associated trees coincide up to level $n$ and $N(F(n),2^{-n})=N(F,2^{-n})$. Moreover, for two sets $F_1,F_2\subset [0,1]$, $F_1(n)+F_2(n)$ is $2^{-n}$ separated and 
\[
\frac{1}{2} N(F_1+F_2,2^{-n})\leq\#(F_1(n)+F_2(n))\leq 2 N(F_1+F+2,2^{-n}).
\]
Due to this, $\#(F_1(n)+F_2(n))$ is useful for estimating the box dimensions of $F_1+F_2$.

\section{Proofs}

We start by proving Theorem \ref{main1asym} in Section \ref{main1proof}, followed by  Corollary \ref{main1cor} in Section \ref{main1corproof}. In Section \ref{main2proof} we prove Theorem \ref{main2}.  In Sections \ref{distanceproofA} and \ref{distanceproofB} we will prove Theorems \ref{distancethmA} and \ref{distancethmB} respectively, which concern distance sets.  The final section of the paper discusses several examples, including Section \ref{selfsimsection} which handles various dynamically invariant sets.

\subsection{Proof of Theorem \ref{main1asym}: strict increase}\label{main1proof}

We break the proof down into a few lemmas, from which the conclusion of Theorem \ref{main1asym} immediately follows.

\begin{lma}\label{lma1}
	Let $F_1, F_2\subset [0,1]$ with $\uBoxd F_1+F_2 =\uBoxd F_1$, then either $\lBoxd F_2=0$ or $\Assouad F_1=1$.
\end{lma}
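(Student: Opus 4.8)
The plan is to argue by contrapositive: assuming both $\lBoxd F_2 > 0$ and $\Assouad F_1 < 1$, I will show that $\uBoxd(F_1+F_2) > \uBoxd F_1$, which contradicts the hypothesis. The main engine is Hochman's inverse theorem (Theorem \ref{HochmanEntropy}) applied to suitably chosen measures on the $2^{-n}$-discretizations $F_1(n)$ and $F_2(n)$, together with the dictionary between entropy and covering numbers developed in Lemma \ref{EntropyToCovering}.

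Here are the key steps. First, fix a scale $n$ realizing (approximately) the upper box dimension of $F_1$, so that $N(F_1,2^{-n}) \approx 2^{n\,\uBoxd F_1}$, and let $\mu$ be the uniform probability measure on $F_1(n)$ and $\nu$ the uniform probability measure on $F_2(n)$ (rescaled into $[0,1]$). For a small $\varepsilon > 0$ and a suitable $m$, apply Theorem \ref{HochmanEntropy} to $\mu$ and $\nu$. The theorem gives a dichotomy: either $H_n(\mu * \nu) \geq H_n(\mu) + \delta$, or there are large index sets $I, J \subset \{0,\dots,n\}$ on which $\mu$ is mostly $(\varepsilon,m)$-uniform along $I$ and $\nu$ is mostly $(\varepsilon,m)$-atomic along $J$. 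In the first case, translating entropy back to covering numbers via the relation $\log N(F_1+F_2,2^{-n}) \gtrsim \log \#(F_1(n)+F_2(n)) = H(\mu*\nu,\mathcal{D}_n)$ and $H(\mu,\mathcal{D}_n) = \log \#F_1(n) = \log N(F_1,2^{-n})$, we get a definite multiplicative gain in covering number at scale $2^{-n}$, hence (after checking this happens along a sequence of scales $n \to \infty$) $\uBoxd(F_1+F_2) \geq \uBoxd F_1 + \delta/\log 2 \cdot(\text{something}) > \uBoxd F_1$, done. So the real work is to rule out the second alternative using the two structural hypotheses.

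In the second case, I would use the assumption $\Lower F_2 > 0$ — or, in the symmetric setting of Corollary \ref{main1sym}, derive the needed sparsity of $F_2 = F_1$ from $\uBoxd F_1 < 1$ — to say that $\nu$ cannot be $(\varepsilon,m)$-atomic on too many scales: positive lower dimension forces $F_2$ to branch at a definite rate at \emph{every} scale and location, so $\nu^{x,i}$ has covering number of its support at least $2^{c m}$ for a fixed $c = \Lower F_2 - o(1) > 0$ on a set of $x$ of large $\nu$-measure, contradicting $(\varepsilon,m)$-atomicity once $\varepsilon$ is chosen small relative to $c$. Hence $J$ must be nearly empty, so $I$ is large, and then I invoke Lemma \ref{EntropyToCovering} (the $(\varepsilon,m)$-uniform half): $\mu$ being mostly uniform on a $(1-\varepsilon)$-proportion of scales forces $N(F_1,2^{-n}) = \#F_1(n) \geq 2^{(1-\varepsilon)^3 n}$, i.e. $\uBoxd F_1 \geq (1-\varepsilon)^3$; letting $\varepsilon \to 0$ this would give $\uBoxd F_1 = 1$, contradicting $\uBoxd F_1 < 1$. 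Alternatively, under $\Assouad F_1 < 1$, one argues symmetrically that $\mu$ cannot be $(\varepsilon,m)$-uniform on too many scales, because bounded Assouad dimension caps the local branching of $F_1$ uniformly, so $I$ is nearly empty; then $J$ is large and $F_2$ being mostly atomic along $J$ forces (via the first half of Lemma \ref{EntropyToCovering}) $N(F_2,2^{-n}) \leq 2^{5\varepsilon n}$ along these scales, which pushes $\lBoxd F_2 = 0$, contradicting $\lBoxd F_2 > 0$. Either way we reach a contradiction.

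The main obstacle I anticipate is the bookkeeping around scales: Hochman's theorem and Lemma \ref{EntropyToCovering} are inherently statements at a single large scale $n$, whereas box dimensions are $\limsup$/$\liminf$ quantities, so I must carefully pass to appropriate subsequences of scales — choosing $n$ to realize $\uBoxd F_1$ while simultaneously $\lBoxd F_2$ gives a lower bound at the \emph{same} scale — and ensure the constants $\delta, \varepsilon, m$ are chosen in the right order (first $\varepsilon$ small depending on the dimension gaps $1 - \Assouad F_1$ or $\Lower F_2$, then $m$, then $\delta$ and $n_0$ from Hochman). A secondary subtlety is correctly quantifying "positive lower dimension forbids atomicity" and "bounded Assouad dimension forbids uniformity" at the level of a single scale block $[i, i+m]$, which requires translating the $R/r$-type estimates in the definitions of $\Lower$ and $\Assouad$ into statements about $H_m(\mu^{x,i})$ being bounded away from $0$ or from $1$ respectively; this is where the roles of the Assouad and lower dimensions "play a critical role" as advertised.
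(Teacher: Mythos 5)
Your second (``Alternatively, under $\Assouad F_1<1$'') branch --- Hochman's dichotomy applied to the uniform counting measures on $F_1(n_i)$ and $F_2(n_i)$ at observing scales for $F_1$, with $\Assouad F_1<1$ ruling out any $(\varepsilon,m)$-uniform scale via the entropy/full-branching tree correspondence, and the atomic half of Lemma \ref{EntropyToCovering} then forcing $\lBoxd F_2\le 5\varepsilon$ --- is precisely the paper's proof of this lemma, just run in the contrapositive direction, so your proposal is correct and takes essentially the same approach. One caveat that does not affect the proof (since the Assouad branch alone establishes the statement): your first branch assumes $\Lower F_2>0$, which is not implied by $\lBoxd F_2>0$, and argues that a large covering number of $\supp(\nu^{x,i})$ contradicts $(\varepsilon,m)$-atomicity, which is false because atomicity is an entropy condition compatible with large support --- this is exactly why the companion Lemma \ref{lma1lower} replaces the uniform counting measure on $F_2(n)$ by the special measure on a Moran subset (Lemmas \ref{LowerDimensionSet} and \ref{MoranLower}).
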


\begin{proof}
For the upper box dimension it is convenient to introduce \emph{observing scales} defined to be any sequence of real numbers $0<r_i<1$ such that
	\[
	\lim_{i\to\infty} \frac{\log N(F_1,r_i)}{-\log r_i}=\uBoxd F_1 \text{ and } \lim_{i\to\infty} r_i=0.
	\]
The existence of such sequences comes directly from the definition of upper box dimension. Moreover, we can assume the observing scales  are dyadic, that is, we can find a strictly increasing  integer sequence $n_i$ such that $2^{-n_i}$ are observing scales.  Fix a set of dyadic observing scales and let $\delta \in (0,1)$.  For all sufficiently large $i$, we have
\[
	\#(F_1+F_2)(n_i)\leq 2 N(F_1+F_2,2^{-n_i}) \leq N(F_1,2^{-n_i})^{1+\delta}=\#F_1(n_i)^{1+\delta}.
	\] 
If this were not true, then we would have $\uBoxd F_1+F_2\geq(1+\delta)\uBoxd F_1$ which contradicts our assumption that $\uBoxd F_1+F_2 = \uBoxd F_1$.  
	
Let $\varepsilon>0$ be arbitrary and choose $m=m(\varepsilon)=[\log 1/\varepsilon]$. (This choice of $m(\varepsilon)$ is not that important, in fact any function $f(\varepsilon)$ which monotonically goes to $\infty$ as $\varepsilon$ goes to $0$ will serve equally well.) Apply Theorem \ref{HochmanEntropy} to obtain a $\delta \in (0,1)$ and an $n_0\in \mathbb{N}$. Then for any $n_i \ge n_0$ we define the measures $\mu$ and $\nu$ to be the uniform counting measures on $F_1(n_i)$ and $F_2(n_i)$ respectively. Thus, if these measures satisfy the entropy condition in Theorem \ref{HochmanEntropy} then we can partition the levels $\{0,1,2, \dots ,n_i\}$ into sets $I,J$ and $K$ such that $\#(I\cup J)\geq (1-\varepsilon)n_i$ and the measures $\mu,\nu$ are as stated in Theorem \ref{HochmanEntropy}.

We wish to check the condition $H_{n_i}(\mu*\nu)\leq H_{n_i}(\mu)+\delta$ given $\#(F_1({n_i})+F_2(n_i))\leq \#F_1(n_i)^{1+\delta}$. As $\mu,\nu$ are uniform counting measures we see that
\[
H_{n_i}(\mu)=\frac{1}{n_i\log 2}\log \#F_1(n_i)
\]
\[
H_{n_i}(\nu)=\frac{1}{n_i\log 2}\log \#F_2(n_i).
\]
Then
\begin{align*}
H_{n_i}(\mu*\nu)&\leq \frac{1}{n_i\log 2} \log \#(F_1(n_i)+F_2(n_i)) \quad \text{ by Lemma \ref{EntropyHigh}} \\
&\leq \frac{1}{n_i\log 2} \log \#F_1(n_i)^{1+\delta} \\
&=H_{n_i}(\mu)+\delta H_{n_i}(\mu) \\
&\leq H_{n_i}(\mu)+\delta \qquad \text{since } H_{n_i}(\mu)\leq 1. 
\end{align*}    
If for all $n_i$ large enough, the set $I$ from the theorem is empty, then $\Boxd F_2$ will be very small because $\#J\geq (1-\varepsilon) n_i$ and we can apply Lemma \ref{EntropyToCovering} to $\nu$. This leads to
\begin{equation} \label{treecountingbd}
	N(F_2,2^{-n_i})\leq 2^{5\varepsilon n_i}.
\end{equation}
It follows that 
\[
	\lBoxd F_2\leq 5\varepsilon.
\]
Since $\varepsilon>0$ can be chosen arbitrarily small we conclude that $\lBoxd F_2 = 0$.  Note that we only get information about the lower box dimension here since the scales $2^{-n_i}$ were chosen to be observing scales for $F_1$, not $F_2$. If $F_1=F_2=F$ then  we can deduce $\uBoxd F=0$. This is needed to obtain Corollary \ref{main1sym}. 

Therefore, if  $\lBoxd F_2 > 0$, then for all $\varepsilon>0$ small enough and $m=[\log 1/\varepsilon]$ there is a $k\in \left\{0,\ldots, n\right\}$ (where $n$ is some large integer) and an $x\in [0,1]$ such that $\mu^{x,k}$ is $(\varepsilon,m)$-uniform. This then implies that there exists a $(\varepsilon,m)$-full branching subtree of length $m$ somewhere in $T_1$ by our discussion in Section \ref{hochman} and this clearly implies that $\Assouad F_1=1$. 
\end{proof}

We wish to show a dual result for the lower dimension. In the previous proof we relied on large entropy implying large covering number. As already mentioned, small entropy does not necessarily imply a small covering number. However if the set is sufficiently homogeneous then this is true. 

In order to tackle this problem we make the following observation: sets with positive lower dimension contain nearly homogeneous subsets. We start by introducing the following version of Moran constructions. Let $k$ be a positive integer. We first take the unit interval $[0,1]$ as our zeroth generation. Then for the first generation we take $k$ disjoint intervals $I_i$ all of length $l_1>0$ such that the distance between the intervals is at least $l_1$. For the second generation, we take each $I_i$ from the first generation and split it into $k$ disjoint intervals all of length $l_2$ with separation $l_2$ as well. We do this construction for a sequence of positive numbers $\{l_n\}_{n\in\mathbb{N}}$ and in the end we obtain a compact set $F\in [0,1]$ which is the intersection of all intervals from all generations. We call such $F$ Moran constructions with strong separation condition and uniform branching number $k$.

\begin{lma}\label{LowerDimensionSet}
	Let $F\subset [0,1]$ be compact with $\Lower F=s>0$. Then for any $\varepsilon>0$, we can find a subset $F'\subset F$ which is a Moran construction with strong separation condition and uniform branching number and $\Lower F'\geq s-\varepsilon$.
\end{lma}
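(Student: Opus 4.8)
The plan is to extract a Moran subset of $F$ by repeatedly using the lower dimension lower bound to find, inside any small ball centred on a point of $F$, many well-separated children again meeting $F$ in a controlled way. Fix $\varepsilon>0$ and let $s=\Lower F>0$. By definition of the lower dimension there is a constant $c>0$ so that $N(B(x,R)\cap F,r)\geq c(R/r)^{s'}$ for all $x\in F$, all $R\in(0,\mathrm{diam}(F))$ and all $r\in(0,R)$, where $s'=s-\varepsilon/2$ say. The key point is that such a covering lower bound forces, at a suitable ratio $R/r$, the existence of $\gtrsim (R/r)^{s'}$ points of $F$ inside $B(x,R)$ that are pairwise $\sim r$-separated: indeed a maximal $r$-separated subset of $B(x,R)\cap F$ has cardinality at least $N(B(x,R)\cap F, r)$ up to an absolute constant, and its points lie in disjoint balls of radius $r/2$. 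Choosing $R/r$ a large power of $2$ absorbs the constant $c$ into the exponent, so that we get at least $k:=2^{\lceil (s-\varepsilon)t\rceil}$ points that are $2^{-t}R$-separated inside $B(x,R)$, for $t$ a fixed large integer.

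First I would fix that integer $t$ large enough that $(s-\varepsilon/2) - (\log_2 c^{-1} + O(1))/t \geq s-\varepsilon$, set $k = 2^{\lceil(s-\varepsilon)t\rceil}$, and set the ratio $\rho = 2^{-t}$. Then I would build the Moran set by a greedy/recursive construction: start with an interval containing $F$; having produced a generation-$n$ interval $I$ of length $\ell_n$ centred at (or near) a point of $F$ with $I\cap F\neq\emptyset$, apply the lower-dimension estimate with $R=\ell_n/2$ and $r=\rho R$ to obtain $k$ points of $F$ in $I$ that are $\rho \ell_n/2$-separated; around each put an interval of length $\ell_{n+1}:=\rho\ell_n/C$ for a fixed constant $C$ chosen so that the $k$ chosen intervals are pairwise separated by at least $\ell_{n+1}$ and all contained in $I$. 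Each such child interval still meets $F$ (it contains one of the chosen points), so the construction continues indefinitely, and the intersection $F'$ of all generations is a Moran set with the strong separation condition and uniform branching number $k$, contained in $F$.

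It remains to check $\Lower F'\geq s-\varepsilon$. Here I would use the standard formula for the lower dimension of a homogeneous Moran set: at scale comparable to $\ell_n$ every ball $B(x,R)$ with $\ell_{n}\leq R < \ell_{n-1}$ centred in $F'$ sees a definite fraction of a generation-$n$ cylinder, hence at scale $\ell_{n+j}$ contains at least $\sim k^{j-1}$ points, and since $\ell_{n+j}/\ell_n = (\rho/C)^{j}$ while $k = 2^{\lceil(s-\varepsilon)t\rceil} \geq (2^{-t})^{-(s-\varepsilon)} = \rho^{-(s-\varepsilon)}$, one gets $N(B(x,R)\cap F', r)\gtrsim (R/r)^{s-\varepsilon}$ after adjusting the constant to account for the factor $C$ (shrinking $\varepsilon$ slightly, or noting $C$ contributes only a bounded multiplicative error that the $\inf$/$\sup$ in the dimension definition washes out if we chose $t$ large enough at the start).

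The main obstacle I anticipate is bookkeeping the separation constant $C$ and the leftover multiplicative constant $c$ from the lower-dimension inequality: one must choose $t$ large first, purely in terms of $\varepsilon$, $s$ and $c$, so that passing from "$N(B(x,R)\cap F,r)\geq c(R/r)^{s-\varepsilon/2}$" to "$k\geq \rho^{-(s-\varepsilon)}$ genuinely separated children, each still a $C$-fraction smaller" does not erode more than $\varepsilon$ of the exponent; and then verify that the resulting Moran set's lower dimension, computed with these parameters, is genuinely $\geq s-\varepsilon$ rather than merely $\geq s - \varepsilon - o(1)$. This is a routine but slightly delicate chain of inequalities; everything else (the existence of separated points from a covering lower bound, the recursive construction, the Moran-set lower dimension formula) is standard.
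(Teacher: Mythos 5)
Your proposal is correct and is essentially the paper's argument: both fix a large scale ratio ($2^{-m}$ in the paper, $\rho=2^{-t}$ for you) so the multiplicative constant in the lower-dimension bound is absorbed into a slightly reduced exponent, then iterate to extract at each step a uniform number $\approx 2^{(s-\varepsilon)m}$ of children at that ratio, discarding a bounded proportion to enforce separation comparable to the child size, and finally note that the resulting uniformly branching Moran set lies in $F$ by compactness and has lower dimension at least $s-\varepsilon$. The only difference is presentational: the paper phrases the selection in terms of full height-$m$ subtrees of the dyadic tree and drops at most half of the level-$m$ vertices for separation, whereas you use maximal $r$-separated nets and shrink intervals around the chosen points, which amounts to the same bookkeeping.
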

\begin{proof}
	As $\Lower F = s$, we can find an integer $m$ such that for all $x\in F$ and all pairs of numbers $R,r$ with $0<r<2^m r\leq R<1$ we have the following inequality
	\[
	N(B(x,R),r)\geq \left(\frac{R}{r}\right)^{s-\varepsilon}.
	\]
	That is to say, the binary tree $T$ associated with $F$ has the property that any full subtree $T'$ of height $m$ contains at least $2^{(s-\varepsilon)m}$	many level $m$ vertices. A subtree $T'$ is full if it is maximal in the sense that we can not join any new vertex from $T$ to $T'$  without increasing the height of $T'$. 
	
We now construct a Moran construction inside $F$. For the first step we start at the root of $T$ and take the full subtree of length $m$ from that vertex. By dropping at most half of the vertices we can assume that the associated dyadic intervals are $2^{-m}$-separated. Then we can take any collection of $\lfloor2^{(s-\varepsilon)m-1}\rfloor$ level $m$ vertices and iterate this procedure on all chosen vertices. We can continue this process, and the resulting subtree $T'$ of $T$ is regular in the sense that any subtreee of $T'$ of height $m$ has roughly $2^{(s-\varepsilon)m-1}$ level $m$ vertices. The tree $T'$ is associated to a set $F'$ in the previously described way. $F$ is compact so closed and thus $F' \subset F$. Then it is easy to see that $F'$ has lower dimension at least $s-\varepsilon$ and it is a Moran construction with strong separation condition and uniform branching number.
\end{proof}

One can see that all the dimensions considered in this paper coincide for Moran sets but more information is needed. The following lemma will formalise the homogeneity of Moran constructions. 
\begin{lma}\label{MoranLower}
	Let $F\subset [0,1]$ be a Moran construction with strong separation condition and uniform branching number of positive lower dimension. Then there is a probability measure $\nu$ supported on $F$ and numbers $\varepsilon>0, m>0$ such that for all $x\in F, i\in\mathbb{N}$, $\nu^{x,i}$ is not $(\varepsilon,m)$-atomic.
\end{lma}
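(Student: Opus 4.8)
The plan is to write down the natural measure on a Moran construction and show, by a direct entropy computation, that it cannot be $(\varepsilon,m)$-atomic at any scale once $\varepsilon$ is small and $m$ is large, with $\varepsilon$ and $m$ depending only on the branching data of $F$. Let $F$ be a Moran set with strong separation, uniform branching number $k$, and contraction ratios determined by the lengths $\{l_n\}$. Since $\Lower F > 0$, we know $k \geq 2$ and moreover the level lengths do not shrink too fast: there is an integer $p$ such that passing from one Moran generation to the next costs at most $p$ dyadic levels, i.e. $l_{n}/l_{n+1} \le 2^{p}$ for all $n$ — this follows because if some single branching step spanned arbitrarily many dyadic scales, a point would have arbitrarily sparse neighbourhoods and the lower dimension would be $0$. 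Let $\nu$ be the natural Moran measure: it gives each of the $k^{j}$ level-$j$ cylinders equal mass $k^{-j}$.

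\medskip

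The key step is the entropy estimate. Fix $x \in F$ and $i \in \mathbb{N}$, and consider the rescaled measure $\nu^{x,i}$, which lives on $T^{D(x,i)}(F \cap D(x,i))$. The dyadic interval $D(x,i)$ sits between two consecutive Moran generations, say generation $j$ (whose cylinder containing $x$ has length comparable to $2^{-i}$ up to the bounded factor $2^{p}$) and later generations. Because the Moran construction has strong separation with separation equal to the interval length, the cylinders at generation $j+1$ inside $D(x,i)$ are $\gtrsim 2^{-i-p}$-separated, and there are $k$ of them each carrying equal conditional $\nu$-mass $1/k$. Choosing $m$ large enough that the rescaled copies of these generation-$(j+1)$ cylinders are resolved by the dyadic partition $\mathcal{D}_m$ — which is possible with $m$ depending only on $p$ — we get that $\nu^{x,i}$ distributes mass $1/k$ onto each of $k$ disjoint $\mathcal{D}_m$-cells (or at worst, after collapsing, onto $\lceil k/2\rceil \geq 2$ cells with comparable mass). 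Hence
\[
H(\nu^{x,i},\mathcal{D}_m) \ \geq\ \log\lceil k/2\rceil \ \geq\ \log 2,
\]
uniformly in $x$ and $i$, so that $H_m(\nu^{x,i}) = H(\nu^{x,i},\mathcal{D}_m)/(m\log 2) \geq 1/m > 0$. Now take $\varepsilon = \varepsilon(k,p) < 1/m$ with $m = m(p)$ as above; then $H_m(\nu^{x,i}) > \varepsilon$ for every $x \in F$ and every $i$, which is precisely the statement that $\nu^{x,i}$ is never $(\varepsilon,m)$-atomic.

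\medskip

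The main obstacle, and the only place requiring care, is bookkeeping the mismatch between the Moran generation scales and the dyadic scales $2^{-i}$: an arbitrary level $i$ will fall strictly inside a Moran generation, so $D(x,i)$ need not align with a Moran cylinder. One must therefore argue that, regardless of where $i$ lands, within a bounded number $m$ of further dyadic levels one always witnesses a genuine branching of $\nu$ into at least two cells of comparable mass. This is where the uniform bound $l_n/l_{n+1} \le 2^{p}$ (equivalently: boundedly many dyadic scales per Moran generation, a consequence of $\Lower F > 0$) is essential — without it the "time to the next branching" would be unbounded and no uniform $m$ would work. The remaining points (the separation condition guaranteeing the branches land in distinct dyadic cells, and the $\le 2$ collapse of intervals when we discarded half the vertices in Lemma \ref{LowerDimensionSet}) are routine and only affect the constants $\varepsilon, m$, not the argument.
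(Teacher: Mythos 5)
Your measure and your overall plan (a uniform positive lower bound on $H(\nu^{x,i},\mathcal{D}_m)$ at every node, then take $\varepsilon<1/m$) are in the same spirit as the paper's argument, and your observation that $\Lower F>0$ forces $l_n/l_{n+1}\le 2^{p}$ is correct. But the step you yourself flag as the main obstacle is exactly where the proof breaks, and the bounded-ratio bound does not repair it: that bound controls how many dyadic \emph{scales} separate consecutive Moran generations, not where the cylinders sit relative to the dyadic \emph{grid}. A dyadic interval $D(x,i)$ need not contain the generation-$(j+1)$ cylinders you invoke; it can clip an arbitrarily thin sliver of a single cylinder adjacent to one of its endpoints. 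Concretely, suppose a cylinder $J$ of length $l_n\ge 2^{-i}$ has its left endpoint $a\in F$ at distance $\eta<2^{-i-m}$ to the left of a level-$i$ dyadic point $b$ (the positions of the cylinders are free reals, so nothing in ``uniform branching $+$ strong separation $+$ bounded ratios'' rules this out, at a sparse sequence of generations with $\eta$ decaying faster than any prescribed $2^{-i-m}$). By the separation condition $F\cap(a-l_n,a)=\emptyset$, so for $x=a$ the interval $D(x,i)=[b-2^{-i},b)$ meets $F$ only inside $[a,b)\subseteq[b-2^{-i-m},b)$, a single cell of relative level $m$. Hence $H(\nu^{x,i},\mathcal{D}_m)=0$, i.e.\ $\nu^{x,i}$ is $(\varepsilon,m)$-atomic, for \emph{any} measure $\nu$ supported on $F$, no matter how $\varepsilon$ and $m$ were fixed in advance. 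So your key assertion --- that $D(x,i)$ contains $k$ generation-$(j+1)$ cylinders of conditional mass $1/k$, giving $H(\nu^{x,i},\mathcal{D}_m)\ge\log 2$ uniformly in $(x,i)$ with $m=m(p,k)$ --- is false in general: the number of relative levels before a genuine branching becomes visible \emph{inside} $D(x,i)$ is of order $\log_2(2^{-i}/\eta)$, which is not bounded in terms of $p$ and $k$.

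The missing input is dyadic alignment of the cylinders. In the paper this is available because the Moran subsets that are actually fed into this lemma are produced by Lemma \ref{LowerDimensionSet}, whose construction uses dyadic intervals as cylinders; then every cylinder is either contained in or disjoint from any given $D(x,i)$, the clipped-sliver scenario cannot occur, and your computation goes through with $m$ about twice the generation step (either $D(x,i)$ contains at least two cylinders of the current generation, which carry equal $\nu$-mass and lie in distinct cells, or it contains exactly one and that cylinder's $k\ge 2$ children split its mass equally a bounded number of levels further down). Alternatively, one can aim for the weaker statement that at each scale the set of $x$ for which $\nu^{x,i}$ is $(\varepsilon,m)$-atomic has small $\nu$-measure, which is all that the application of Hochman's inverse theorem actually requires. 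As written, your argument needs one of these extra ingredients; without them the claimed uniform entropy bound, and with it the ``for all $x\in F$, $i\in\mathbb{N}$'' conclusion, does not follow from the stated hypotheses.
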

\begin{proof}
	Let $F$ be a Moran construction of dimension $s>0$ and assign mass one to $F\cap [0,1]$. We then split the measure equally between $[0,1/2]\cap F$ and $[1/2,1]\cap F$ so if $F$ intersects both halves then $F\cap [0,1/2]$ has measure $1/2$ but if $F\cap[0,1/2] = \emptyset$ then the whole measure is on $F\cap[1/2,1]$. This procedure is iterated over all dyadic intervals, equally splitting the mass of any dyadic interval between its descendants that intersect $F$. This procedure produces a measure $\nu$ on $F$. We shall now show that $\nu$ has the required property.
	
	Let $T$ be the tree associated with $F$. Let $\varepsilon > 0$ be small and $m$ be a large integer. We can find a constant $C>0$ such that for any vertex $a$ of $T$ and integer $n\geq m$, the number of descendants at level $n$ is bounded between $C^{-1} 2^{sn}$ and $C 2^{sn}$. This follows from the Moran construction. Also when $m$ is large, $C$ can be chosen close to $1$. Then due to the construction of $\nu$ we see that there exist $m,\varepsilon$ such that the level $m$ entropy of $\nu^{x,i}$ is $sm\log 2$. Thus $\nu^{x,i}$ is not $(\varepsilon,m)$-atomic for all $x\in F,i\in\mathbb{N}$.
\end{proof}

We are now able to prove the final lemma. The proof will follow the proof of Lemma \ref{lma1} with the added Moran construction needed for more control in the final step.

\begin{lma}\label{lma1lower}
	Let $F_1, F_2\subset [0,1]$ with $\uBoxd F_1+F_2 =\uBoxd F_1$, then either $\uBoxd F_1=1$ or $\Lower F_2=0$.
\end{lma}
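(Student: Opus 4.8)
\textbf{Proof proposal for Lemma \ref{lma1lower}.}

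The plan is to mirror the proof of Lemma \ref{lma1} but to swap the roles played by the two sets at the final step, using the Moran-construction machinery (Lemmas \ref{LowerDimensionSet} and \ref{MoranLower}) so that this time it is the \emph{atomic} alternative of Hochman's theorem that we can rule out when $\Lower F_2 > 0$. As before, fix a dyadic sequence of observing scales $2^{-n_i}$ for $\uBoxd F_1$, so that for any fixed $\delta \in (0,1)$ and all large $i$ we have $\#(F_1(n_i)+F_2(n_i)) \le \#F_1(n_i)^{1+\delta}$; otherwise $\uBoxd(F_1+F_2) \ge (1+\delta)\uBoxd F_1 > \uBoxd F_1$, contradicting the hypothesis. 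Given $\varepsilon > 0$, set $m = [\log 1/\varepsilon]$, apply Theorem \ref{HochmanEntropy} to get $\delta$ and $n_0$, and for $n_i \ge n_0$ let $\mu,\nu$ be the uniform counting measures on $F_1(n_i)$ and $F_2(n_i)$. Exactly the computation in Lemma \ref{lma1} shows $H_{n_i}(\mu*\nu) \le H_{n_i}(\mu) + \delta$, so we are in the second alternative of Hochman's theorem: there are disjoint $I,J \subset \{0,\dots,n_i\}$ with $\#(I\cup J) \ge (1-\varepsilon)n_i$, with $\mu^{x,k}$ being $(\varepsilon,m)$-uniform for a $(1-\varepsilon)$-fraction of $\mu$ when $k \in I$, and $\nu^{x,k}$ being $(\varepsilon,m)$-atomic for a $(1-\varepsilon)$-fraction of $\nu$ when $k \in J$.

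Now suppose for contradiction that $\uBoxd F_1 < 1$ and $\Lower F_2 = s > 0$. The idea is that a positive lower dimension for $F_2$ prevents $\nu$ from being atomic on too many scales — but lower dimension on its own does not control the \emph{uniform} measure $\nu$ on $F_2(n_i)$ directly, which is why we first pass to a Moran subset. Fix a small $\varepsilon' > 0$; by Lemma \ref{LowerDimensionSet} (applied to a closed subset of $F_2$ of lower dimension close to $s$; recall one may work with the modified lower dimension by passing to subsets) there is a Moran subset $F_2' \subseteq F_2$ with strong separation, uniform branching number, and $\Lower F_2' \ge s - \varepsilon' > 0$. By Lemma \ref{MoranLower} there is a measure $\eta$ supported on $F_2'$ and constants $\varepsilon_0 > 0$, $m_0$ such that $\eta^{x,i}$ is never $(\varepsilon_0,m_0)$-atomic. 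Since $F_2' \subseteq F_2$ we have $F_1 + F_2' \subseteq F_1 + F_2$, so $\uBoxd(F_1 + F_2') \le \uBoxd(F_1+F_2) = \uBoxd F_1$, and running the argument of the previous paragraph with $\nu$ replaced by $\eta$ (and with $\varepsilon \le \varepsilon_0$, $m \ge m_0$) we again land in the atomic alternative for $\eta$: for $k$ in a set $J$ with $\#J \ge (1-\varepsilon)n_i - \#I$, the measure $\eta^{x,k}$ is $(\varepsilon,m)$-atomic on a large portion of $\eta$. If the uniform alternative is vacuous for all large $i$ (i.e. $I$ is essentially empty), then $J$ has size at least $(1-2\varepsilon)n_i$, and having $\eta^{x,k}$ be $(\varepsilon,m)$-atomic for such $k$ directly contradicts Lemma \ref{MoranLower} once $\varepsilon < \varepsilon_0$ and $m \ge m_0$; more carefully, atomicity of $\eta^{x,k}$ forces $H_m(\eta^{x,k}) \le \varepsilon$, whereas Lemma \ref{MoranLower} gives $H_m(\eta^{x,k}) \ge s m \log 2 / (m\log 2)$ bounded below by a fixed positive constant. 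Hence $I$ cannot be essentially empty: for arbitrarily small $\varepsilon$ there is some scale $k$ and some $x$ with $\mu^{x,k}$ being $(\varepsilon,m)$-uniform, which by the discussion in Section \ref{hochman} produces an $(\varepsilon,m)$-full-branching vertex in $T_{F_1}$, and letting $\varepsilon \to 0$ this forces $\Assouad F_1 = 1$. But $\Assouad F_1 = 1$ does not yet contradict $\uBoxd F_1 < 1$ — so the final twist is that we must instead extract a contradiction with $\uBoxd F_1 = 1$. To do this we use Lemma \ref{EntropyToCovering} rather than a single full-branching vertex: since $\eta$ cannot be atomic on a $(1-\varepsilon)$-fraction of scales, the set $I$ of `uniform' scales for $\mu$ must itself have cardinality at least $(1-c\varepsilon)n_i$ for an absolute constant $c$; applying the second part of Lemma \ref{EntropyToCovering} to $\mu$ gives $N(F_1, 2^{-n_i}) \ge 2^{(1-\varepsilon)^3 n_i}$, and since the $n_i$ are observing scales for $\uBoxd F_1$ this yields $\uBoxd F_1 \ge (1-\varepsilon)^3$; letting $\varepsilon \to 0$ gives $\uBoxd F_1 = 1$, the desired contradiction. (Note that here the role of $F_1(n_i)$ being $2^{-n_i}$-separated is automatic, and that of $F_2'(n_i)$-separation is handled exactly as in the remark at the end of Section \ref{hochman}.)

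\textbf{Main obstacle.} The delicate point is the bookkeeping that converts ``$\eta$ is not atomic on enough scales'' into ``$I$ is large enough to drive $N(F_1,2^{-n_i})$ up to nearly full size.'' Hochman's theorem only guarantees $\#(I \cup J) \ge (1-\varepsilon) n_i$ with $\eta^{x,k}$ $(\varepsilon,m)$-atomic for $k \in J$; Lemma \ref{MoranLower} says $J$ must essentially be empty, but matching the $\varepsilon$ in Hochman's theorem against the fixed threshold $(\varepsilon_0,m_0)$ from the Moran measure requires choosing $\varepsilon$ small and $m$ large \emph{first}, then applying Hochman with those parameters, and finally checking that the resulting $\delta$ and $n_0$ are still compatible with the observing-scale inequality — this is the same ordering-of-quantifiers subtlety present in Lemma \ref{lma1}, but here it is compounded by the extra layer of first replacing $F_2$ by its Moran subset $F_2'$ and only then running Hochman. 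Everything else is a routine adaptation of the preceding proofs.
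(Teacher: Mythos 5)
Your proposal is essentially the paper's own argument: replace $F_2$ by a Moran subset with strong separation and uniform branching via Lemma \ref{LowerDimensionSet}, equip it with the measure of Lemma \ref{MoranLower} (which is never $(\varepsilon,m)$-atomic because its level-$m$ entropy stays near $s m\log 2$), verify the entropy hypothesis of Theorem \ref{HochmanEntropy} exactly as in Lemma \ref{lma1} using the uniform measure on $F_1(n_i)$ at observing scales, conclude that the atomic alternative is impossible so the set $I$ of uniform scales has size at least $(1-\varepsilon)n_i$, and then apply the second half of Lemma \ref{EntropyToCovering} to get $N(F_1,2^{-n_i})\ge 2^{(1-\varepsilon)^3 n_i}$ and hence $\uBoxd F_1=1$; your quantifier bookkeeping (choose $\varepsilon,m$ compatible with the Moran constants first, then invoke Hochman, then the observing-scale inequality) is also the right order and matches the paper.

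One small correction: your parenthetical reduction to the compact case — ``applied to a closed subset of $F_2$ of lower dimension close to $s$'' — does not work as stated, since a set with positive lower dimension need not contain \emph{any} closed subset of positive lower dimension (e.g.\ $\mathbb{Q}\cap[0,1]$ has lower dimension $1$ but every closed subset is countable and has isolated points, hence lower dimension $0$), and modified lower dimension does not rescue this because Lemma \ref{LowerDimensionSet} genuinely needs compactness to ensure the Moran set sits inside $F$. The paper instead passes to \emph{closures}: $\Lower \overline{F_2}=\Lower F_2$, $\uBoxd \overline{F_1}=\uBoxd F_1$, and $\overline{F_1}+\overline{F_2}\subseteq\overline{F_1+F_2}$ has the same upper box dimension as $F_1+F_2$, so the hypotheses are preserved and Lemma \ref{LowerDimensionSet} applies. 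With that one-line fix your argument is complete; the detour through Assouad dimension in your middle paragraph is unnecessary but you correctly discard it yourself.
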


\begin{proof}
  We can assume that $F_1$ and $F_2$ are compact. If not, we can take the closure and the Assouad, box and lower dimensions will not change. Also it is easy to see that the closure of $F_1+F_2$ is the same as the sumset of the closures of $F_1$ and $F_2$. Assume $\uBoxd F_1 + F_2 = \uBoxd F_1$ and $\Lower F_2 > 0$, then we want to show that $\uBoxd F_1 = 1$. Furthermore by Lemma \ref{LowerDimensionSet} we assume that $F_2$ is a Moran construction with strong separation condition and uniform branching number. Any Moran construction subset $F_2'$ of $F_2$ satisfies our assumptions:
  \[
  \uBoxd F_1 \le \uBoxd F_1 + F_2' \le \uBoxd F_1 + F_2 = \uBoxd F_1
  \]
  and
  \[
  \Lower F_2' > 0.
  \]
  Thus if we can show $\uBoxd F_1 = 1$ when $F_2$ is a Moran construction then the result will follow for any set $F_2$ of positive lower dimension.
  
  Fix a set of dyadic observing scales $2^{-n_i}$ for $F_1$ as before and let $\delta \in (0,1)$ which can be chosen arbitrarily. We can conclude that for all sufficiently large $i$, we have
\[
	\#(F_1+F_2)(n_i)\leq 2 N(F_1+F_2,2^{-n_i}) \leq N(F_1,2^{-n_i})^{1+\delta} = \#F_1(n_i)^{1+\delta}.
	\]	
Let $\varepsilon>0$ be arbitrary, $m=m(\varepsilon)=[\log 1/\varepsilon]$ and apply Theorem \ref{HochmanEntropy} to obtain constants $\delta=\delta(\varepsilon,m)$ and $n_0$. Using the same method as in Lemma \ref{lma1} we can show that the entropies of the uniform measure $\mu$ on $F_1(n)$ and the measure $\nu$, constructed in Lemma \ref{MoranLower}, on $F_2(n)$ satisfy the conditions for the inverse theorem. Thus, for $n_i$ large enough there is a partition of $\left\{0,\ldots, n_i \right\}$ into sets $I,J$ and $W$ with the properties stated in  Theorem \ref{HochmanEntropy}.
    
If for large enough $n_i \ge n_0$ the set $J$ from the theorem is empty, then $\uBoxd F_1$ should be very large because in this case $\#I\geq (1-\varepsilon) n_i$ and so `most' measures $\mu^{x,k}$, for $x\in [0,1]$ and $k\in I$, will be $(\varepsilon,m)$-uniform. Then by Lemma \ref{EntropyToCovering} we deduce that
\[
N(F_1,2^{-n_i})\geq 2^{(1-\varepsilon)^3n_i}.
\]
It follows that 
\[
	\uBoxd F_1\geq (1-\varepsilon)^3  \to 1
	\]
as $\varepsilon \to 0$ and hence $\uBoxd F_1 = 1$. 

Therefore, if  $\uBoxd F_1 <1$, then  for all $\varepsilon>0$ small enough and $m=[\log 1/\varepsilon]$ there exists  $x \in [0,1]$ and $k\in \left\{0,\ldots, n \right\}$ (for some large $n$) such that $\nu^{x,k}$ is $(\varepsilon,m)$-atomic. However by Lemma \ref{MoranLower}, since $F_2$ is a Moran construction of positive lower dimension with strong separation condition and uniform branching number, $\nu$ cannot have any $(\varepsilon,m)$-atomic subtrees which is a contradiction.
\end{proof}

\subsection{Proof of Corollary \ref{main1cor}} \label{main1corproof}

Weak tangents were first introduced by Mackay and Tyson \cite{MT} and play a key role in calculating the Assouad dimension. Let $\mathcal{K}(\mathbb{R}^d)$ be the set of non-empty compact subsets of $\mathbb{R}^d$ equipped with the Hausdorff metric $d_{\mathcal{H}}$ defined by
\[
d_\mathcal{H}(A,B) = \inf \left\{ \varepsilon \ge 0 \colon A \subseteq [B]_\varepsilon \text{ and } B \subseteq [A]_\varepsilon \right\}
\]
where $[A]_\varepsilon$ is the closed $\varepsilon$-neighbourhood of a non-empty set $A$.

\begin{defn}
Let $X, E$ be  compact subsets of $\mathbb{R}^d$ with $E \subseteq X$ and $F$ be a closed subset of $\mathbb{R}^d$. Suppose there exists a sequence of similarity maps $T_k \colon \mathbb{R}^d \rightarrow \mathbb{R}^d$ such that $T_k(F) \cap X \rightarrow E$ in the Hausdorff metric. Then the set $E$ is called a \emph{weak tangent} to $F$.
\end{defn}

For simplicity and without loss of generality we will assume $X=[0,1]^d$ for the rest of this paper unless stated otherwise. The importance of weak tangents can be seen in the following propositions.
\begin{prop}\cite[Proposition 6.1.5]{MT}\label{mt}
Let $E,F\subseteq \mathbb{R}^d$, $E$ compact, $F$ closed and suppose $E$ is a weak tangent to $F$. Then $\Assouad F \ge \Assouad E$. 
\end{prop}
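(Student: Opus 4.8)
The plan is to show that the covering inequality defining the Assouad dimension of $F$ passes to any weak tangent $E$, at the cost of enlarging the implied constant only. Fix $s>\Assouad F$; by definition there is $C>0$ with
\[
N(B(x,R)\cap F,r)\le C\left(\tfrac{R}{r}\right)^{s}\qquad\text{for all }x\in F,\ 0<r<R.
\]
It suffices to find $C'>0$, depending only on $C,s,d$, such that the same inequality (with $C'$ in place of $C$) holds for $E$; this yields $\Assouad E\le s$, and letting $s\downarrow\Assouad F$ gives $\Assouad E\le\Assouad F$, which is the claim.

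First I would isolate two elementary stability properties of such an inequality. \emph{(i) Invariance under similarities:} a similarity $T\colon\mathbb{R}^d\to\mathbb{R}^d$ of ratio $\lambda>0$ sends the ball $B(x,R)$ to $B(Tx,\lambda R)$ and multiplies every covering scale by $\lambda$, so $T(F)$ satisfies the displayed inequality with the very same $C$ and $s$. \emph{(ii) Monotonicity under subsets:} if $A\subseteq B$ and $x\in A$ then $B(x,R)\cap A\subseteq B(x,R)\cap B$, hence $N(B(x,R)\cap A,r)\le N(B(x,R)\cap B,r)$. Let $T_k$ be the similarities realising $E$ as a weak tangent and put $E_k=T_k(F)\cap X$. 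Combining (i) and (ii) gives
\[
N(B(x,R)\cap E_k,r)\le C\left(\tfrac{R}{r}\right)^{s}\qquad\text{for all }x\in E_k,\ 0<r<R,
\]
with $C$ \emph{independent of} $k$.

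The remaining step transfers this uniform bound across the Hausdorff limit $E_k\to E$. Fix $y\in E$ and $0<r<R$ and choose $k$ so large that $d_{\mathcal{H}}(E_k,E)<r/4$. Then there is $y_k\in E_k$ with $|y-y_k|\le r/4$, and every point of $B(y,R)\cap E$ lies within $r/4$ of a point of $E_k$ which, by the triangle inequality, lies in $B(y_k,2R)$; thus $B(y,R)\cap E$ is contained in the $(r/4)$-neighbourhood of $B(y_k,2R)\cap E_k$. Cover $B(y_k,2R)\cap E_k$ by at most $C\bigl(8R/r\bigr)^{s}$ dyadic cubes of side $r/4$; enlarging each such cube to a cube of side $r$ (and absorbing into a dimensional constant the bounded overlap with a mesh-$r$ grid) produces a cover of $B(y,R)\cap E$ by $O_d\!\left(C(R/r)^{s}\right)$ cubes of side $r$. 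Hence $N(B(y,R)\cap E,r)\le C'(R/r)^{s}$ for a suitable $C'$, as needed.

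The argument is routine — indeed this is exactly the classical fact (Mackay–Tyson) that Assouad dimension does not drop under passing to weak tangents — and the only points requiring attention are bookkeeping: checking that the constant genuinely survives Steps (i) and (ii) unchanged, and accommodating the factor-of-two slack between $B(y,R)$ and $B(y_k,2R)$ together with the discrepancy between $r/4$-neighbourhoods and covers by cubes of side $r$. There is no substantial obstacle.
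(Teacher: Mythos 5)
Your argument is correct: the uniform covering bound for the sets $T_k(F)\cap X$ (similarity invariance plus monotonicity) transferred across the Hausdorff limit at the single scale pair $(r,R)$ is exactly the standard proof of this fact, which the paper does not reprove but simply cites from Mackay--Tyson \cite[Proposition 6.1.5]{MT}. The only points needing care --- that $r<R$ makes $B(y_k,2R)$ capture the $(r/4)$-neighbourhood, and that passing from cubes of side $r/4$ to side $r$ costs only a dimensional constant --- are handled correctly, so there is nothing to add.
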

\begin{lma}\cite[Propositions 5.7-5.8]{Fr,KOR}\label{lma2}
	Let $F\subset \mathbb{R}^d$ be any nonempty closed set.  Then there is a weak tangent $E$ to $F$ such that $\Haus E=\Assouad F$.
\end{lma}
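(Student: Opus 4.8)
\emph{Plan.} Write $s=\Assouad F$. The idea is to produce a weak tangent $E$ that contains a closed subset $E_0$ with $\Lower E_0\ge s$; then $\Haus E\ge\Haus E_0\ge\Lower E_0\ge s$ (using $\Lower\le\Haus$ for closed sets, as recorded in the introduction), while Proposition \ref{mt} gives $\Haus E\le\Assouad E\le\Assouad F=s$, forcing equality. If $s=0$ there is nothing to prove: rescaling $F$ about any of its points and applying the Blaschke selection theorem produces some weak tangent $E$, and $\Haus E\le\Assouad E\le\Assouad F=0$. So assume $s>0$.

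\emph{Step 1: extraction (the crux).} For each $n$ I want a similarity $T_n\colon\R^d\to\R^d$, a compact set $\tilde E_n\subseteq T_n(F)\cap[0,1]^d$ with $\mathrm{diam}\,\tilde E_n\ge\tfrac12$, a dimensional constant $c=c(d)>0$, and integers $m_n\to\infty$, such that
\[
N\!\big(B(y,R)\cap\tilde E_n,\,r\big)\ \ge\ c\,(R/r)^{\,s-1/n}\qquad\text{for all }y\in\tilde E_n\text{ and all }2^{-m_n}\le r\le R\le 1 .
\]
This is exactly the extraction lemma behind \cite[Propositions 5.7--5.8]{Fr,KOR}: since $s-\tfrac1{2n}<\Assouad F$, the definition of $\Assouad$ supplies a ball $B(x_0,R_0)\cap F$ and a scale $\rho_0$, with $\log(R_0/\rho_0)$ as large as we please, on which the covering number exceeds $(R_0/\rho_0)^{s-1/(2n)}$; an iterated pigeonhole over the intermediate dyadic scales passes to a sub-ball spanning a scale-window of length $m_n\to\infty$ on which the growth exponent is uniformly at least $s-1/n$, a scale-by-scale pruning of the points with too few descendants upgrades this to hold for \emph{all} retained $y$ at the cost of the constant $c(d)$, and one checks that the sub-ball may be chosen so that the retained set is not concentrated in an arbitrarily small ball. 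Rescaling that ball to $B(0,1)$ and translating into $[0,1]^d$ defines $T_n$ and $\tilde E_n$.

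\emph{Step 2: pass to the limit.} By the Blaschke selection theorem there is a subsequence along which $T_n(F)\cap[0,1]^d\to E$ and $\tilde E_n\to E_0$ in the Hausdorff metric $d_{\cH}$; then $E_0\subseteq E$, both are non-empty and compact, $\mathrm{diam}\,E_0\ge\tfrac12$, and $E$ is a weak tangent to $F$ via the similarities $T_n$. Fix $y\in E_0$ and $0<r<R<\tfrac14$. For $n$ large in the subsequence (so that $2^{-m_n}<r$ and $d_{\cH}(\tilde E_n,E_0)$ is small compared with $r$), pick $y_n\in\tilde E_n$ near $y$ and transfer a maximal $r$-separated subset of $B(y_n,R)\cap\tilde E_n$ to a comparable separated subset of $B(y,2R)\cap E_0$; this routine perturbation estimate for covering numbers yields $N(B(y,R)\cap E_0,r)\ge c'(d)\,(R/r)^{\,s-1/n}$ with $c'(d)$ uniform in $n,y,R,r$. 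Letting $n\to\infty$ with $R,r$ fixed gives $N(B(y,R)\cap E_0,r)\ge c'(d)\,(R/r)^{s}$, and as this holds with a single constant for all admissible $y,R,r$ we conclude $\Lower E_0\ge s$. Since $E_0$ is closed, $\Haus E\ge\Haus E_0\ge\Lower E_0\ge s$.

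\emph{Conclusion.} Proposition \ref{mt} gives $\Haus E\le\Assouad E\le\Assouad F=s$, so $\Haus E=s=\Assouad F$, as required. The only substantial step is Step 1: converting the single ``rich ball at one pair of scales'' guaranteed by the definition of Assouad dimension into a piece that is uniformly lower-regular at \emph{every} intermediate scale over an unboundedly long window and, simultaneously, is not concentrated in a small ball. Everything afterwards is soft --- Blaschke compactness, a perturbation estimate for covering numbers, and the two inequalities $\Lower\le\Haus$ (for closed sets) and $\Haus\le\Assouad$ (for weak tangents, Proposition \ref{mt}).
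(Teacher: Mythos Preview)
The paper does not give its own proof of this lemma: it is stated with a citation to \cite{Fr,KOR} and the surrounding text simply records that ``Lemma \ref{lma2} follows originally from Furstenberg's work in \cite{Fu1}, see also \cite{Fu2}. This work was translated to our setting in \cite[Propositions 5.7--5.8]{KOR} and \cite{Fr}.'' There is therefore nothing in the paper to compare against beyond this attribution.

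Your sketch follows the standard route from those references: extract from $F$ a sequence of rescaled pieces that are uniformly lower-regular over longer and longer scale windows, pass to a Hausdorff limit via Blaschke selection, and combine $\Lower E_0\le\Haus E_0$ (for closed $E_0$) with Proposition~\ref{mt} to pin down $\Haus E$. This is exactly the Furstenberg/K\"aenm\"aki--Ojala--Rossi/Fraser strategy, so in spirit your approach matches the cited sources. Your Step~2 and the concluding sandwich are correct and complete. Step~1, as you acknowledge, is the substantive part, and what you have written is a plausible outline rather than a proof: the ``iterated pigeonhole over intermediate dyadic scales'' together with the ``scale-by-scale pruning'' that simultaneously guarantees uniform lower regularity \emph{and} non-degeneracy of diameter is precisely the content of the cited propositions, and carrying it out carefully requires some bookkeeping (in particular, one must check that the pruning does not collapse the set to something of small diameter, which is where the argument in \cite{KOR} does real work). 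If you want a self-contained proof you should write out Step~1 in full; otherwise your write-up is an accurate high-level summary of the argument the paper is citing.
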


Lemma \ref{lma2} follows originally from Furstenberg's work in \cite{Fu1}, see also  \cite{Fu2}. This work was translated to our setting in \cite[Propositions 5.7-5.8]{KOR} and \cite{Fr}.  Applying weak tangents to sumsets we have the following lemma.

\begin{lma}\label{lma2.1}
	Let $F\subset \mathbb{R}^d$ be any nonempty closed set. Then for any weak tangent $E$ to $F$, $2E$ is a subset of a weak tangent to $2F$.
\end{lma}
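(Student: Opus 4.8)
The plan is to unwind the definition of weak tangent for $E \subseteq F$ and then produce a weak tangent to $2F$ which contains $2E$. Suppose $E$ is a weak tangent to $F$, so there is a sequence of similarity maps $T_k \colon \mathbb{R}^d \to \mathbb{R}^d$ with $T_k(F) \cap [0,1]^d \to E$ in the Hausdorff metric. Write $T_k(x) = \lambda_k O_k x + t_k$ where $\lambda_k > 0$, $O_k$ is an orthogonal map and $t_k \in \mathbb{R}^d$. The natural guess for a similarity map detecting a tangent to $2F$ is $S_k(x) = \lambda_k O_k x + 2t_k$, i.e. the map with the same linear part but twice the translation; this is chosen precisely so that $S_k(a_1 + a_2) = T_k(a_1) + T_k(a_2)$ for any $a_1, a_2 \in \mathbb{R}^d$. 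Consequently $S_k(2F) \supseteq S_k(F+F) = T_k(F) + T_k(F)$, and restricting to the unit cube, $S_k(2F) \cap [0,1]^d \supseteq \bigl( (T_k(F)\cap[0,1]^d) + (T_k(F)\cap[0,1]^d)\bigr) \cap [0,1]^d$.

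First I would pass to a subsequence so that $S_k(2F) \cap [0,1]^d$ converges in the Hausdorff metric to some compact set $E'$; this is legitimate because $(\mathcal{K}([0,1]^d), d_{\mathcal{H}})$ is compact, provided the sets are eventually nonempty, which holds since $0 \in E$ forces $T_k(F) \cap [0,1]^d$ to be nonempty for large $k$ (or one can translate so that this is automatic — a standard normalisation in the weak tangent literature). By construction $E'$ is a weak tangent to $2F$. Then I would argue $2E \subseteq E'$. Take $a_1, a_2 \in E$; since $T_k(F)\cap[0,1]^d \to E$, there are points $p_k^{(1)}, p_k^{(2)} \in T_k(F) \cap [0,1]^d$ with $p_k^{(j)} \to a_j$. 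Their sum $p_k^{(1)} + p_k^{(2)} \to a_1 + a_2$, and $p_k^{(1)} + p_k^{(2)} \in S_k(2F)$ by the identity above. The one subtlety is that $p_k^{(1)} + p_k^{(2)}$ need not lie in $[0,1]^d$, so it need not be an element of $S_k(2F)\cap[0,1]^d$; however, since $a_1 + a_2 \in 2E \subseteq [0,1]^d$ whenever $2E \subseteq [0,1]^d$, and the points converge to it, for large $k$ they lie in $[0,1]^d$ up to an arbitrarily small error, so a standard closedness argument places the limit $a_1+a_2$ in $E'$. If $2E$ is not contained in $[0,1]^d$ one instead works with $X = [0,2]^d$ for the tangent to $2F$, or rescales; since the choice of the reference cube $X$ does not affect the dimension statements (as remarked after the definition), this is harmless.

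Thus $2E \subseteq E'$ and $E'$ is a weak tangent to $2F$, which is exactly the claim. The main obstacle is the bookkeeping around the restriction to the unit cube: the sum of two points of the unit cube can leave the cube, so one must be slightly careful to only claim containment of $2E$ (not of the full sumset of the tangents) and to use that $2E$ does lie in the appropriate reference set. A clean way to handle this uniformly is to fix the reference cube for $2F$ to be large enough (e.g. $[-1,2]^d$ or simply rescale $S_k$ by a factor of $1/2$ and enlarge accordingly) so that all relevant sums stay inside it; once that normalisation is in place the convergence argument is routine and the algebraic identity $S_k(a_1+a_2) = T_k(a_1) + T_k(a_2)$ does all the real work.
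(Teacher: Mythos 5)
Your argument is correct and essentially the same as the paper's: both exploit that $T_k(F)+T_k(F)$ is a similar copy of $2F$, pass to a convergent subsequence in the compact space $\bigl(\mathcal{K}([0,2]^d),d_{\mathcal{H}}\bigr)$ to get a weak tangent $G$ to $2F$, and deduce $2E\subseteq G$ from the containments $2\bigl(T_k(F)\cap[0,1]^d\bigr)\subseteq S_k(2F)\cap[0,2]^d$ together with Hausdorff convergence. Your explicit map $S_k(x)=\lambda_k O_k x+2t_k$ even makes precise a point the paper states loosely (it calls $T_k(F)+T_k(F)$ ``the similar copy of $2F$ under $T_k$'', which is only true after doubling the translation), and your final normalisation to the larger reference cube is exactly what the paper does.
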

\begin{proof}
This proof of this lemma is similar to the proof of \cite[Lemma 3.1]{Fr} but we include it for completeness. Assume $E$ is a weak tangent to $F$. This means that there is a sequence of similar copies $F_i$ of $F$ (under similarities $T_i$) such that $\lim_{i\to\infty}d_\mathcal{H}(F_i\cap [0,1]^d, E)=0$. It follows that
\[
\lim_{i\to\infty}d_\mathcal{H}(2(F_i\cap [0,1]^d), 2E)=0.
\]
We also note that 
\[
2(F_i\cap [0,1]^d) \subseteq 2(F_i) \cap [0,2]^d = (2F)_i \cap[0,2]^d,
\]
where $(2F)_i$ is the similar copy of $2F$ under $T_i$. As $(\mathcal{K}([0,2]^d),d_\mathcal{H})$ (the space of non-empty compact subsets of $[0,2]^d $ under the Hausdorff metric) is compact, there exists a weak tangent $G$ to $2F$ under the similarities $T_i$. Thus we have the following
\[
2E \leftarrow 2 (F_i \cap[0,1]^d) \subset (2F)_i \cap [0,2]^d \rightarrow G
\]
and, again as $(\mathcal{K}([0,2]^d),d_\mathcal{H})$ is compact, $2E \subseteq G$ as desired.
\end{proof}

We are now ready to complete the proof of Corollary \ref{main1cor}.

\begin{proof}	
	Let $F\subset\mathbb{R}$ be such that $0< \Assouad F < 1$.  Then by Lemma \ref{lma2} there is a weak tangent $E$ to $\overline{F}$ (the closure of $F$) with $\Haus E=\Assouad \overline{F} = \Assouad F$ (since Assouad dimension is stable under taking closure).  Therefore,
	\[
	0< \Haus E=\Boxd E=\Assouad E=\Assouad F < 1.
	\]
    By Lemma \ref{lma2.1} and Proposition \ref{mt} we see that 
    \[
   \Assouad 2F =   \Assouad 2\overline{F}\geq \Assouad 2E\geq\uBoxd 2E.
    \]
    Finally as $0 < \uBoxd E < 1$, we can apply Lemma \ref{lma1} to get
    \[
    \uBoxd 2E > \Boxd E = \Assouad F
    \]
as required.
\end{proof}

\subsection{Proof of Theorem \ref{main2}: convergence to 1}\label{main2proof}

\begin{proof}
	We can clearly assume $F$ is bounded and, as before, we can further assume $F$ is compact, since taking the closure does not effect the lower dimension.  Let $\Lower F=s>0$ then by our discussion in Section \ref{main1proof}, we can assume that $F$ is a Moran construction with strong separation condition and uniform branching number. Let $\nu$ be the probability measure on $F$ such that the measure of any dyadic interval $D$ intersecting $F$ is split equally between the next level dyadic intervals contained in $D$ and intersecting $F$ (so the measure defined in the proof of Lemma \ref{MoranLower}). As $\Lower F > 0$, we can find $\varepsilon>0$ and $m>0$ such that $\nu^{x,j}$ is never $(\varepsilon,m)$-atomic for every integer $j$ and $x\in F$. We note that $\varepsilon$ can be chosen arbitrarily small.

	Now let $\mu$ be any measure on $[0,1]$. Suppose that $\Lower \supp(\mu*\nu)=s'$ and by definition of the lower dimension, for any small $\gamma>0$, we can find dyadic intervals $E_i\in\mathcal{D}_{n_i}$ with a sequence $\{n_i\}_{i\in\mathbb{N}}$  and a sequence $m_i\to\infty$ such that $\mu*\nu(E_i)>0$ and
\begin{equation}\label{gamma}
	N(\supp(\mu*\nu)^{E_i},2^{-m_i})=N(\supp(\mu*\nu)\cap E_i, 2^{-(n_i+m_i)})\leq 2^{(s'+\gamma)m_i}.
\end{equation}
	As $\mu*\nu(E_i)>0$ we can find dyadic intervals $F_{1,i}, F_{2,i}\in\mathcal{D}_{n_i+1}$ such that $\mu(F_{1,i})>0, \nu(F_{2,i})>0$ and $F_{1,i}+F_{2,i}\subset E_i.$ Otherwise, by definition of the convolution, $\mu*\nu(E_i)=0$. Similarly we see that
	\[
	\mu^{F_{1,i}}*\nu^{F_{2,i}}\ll (\mu*\nu)^{E_i}.
	\]
	We denote $\mu_i=\mu^{F_{1,i}}$ and $\nu_i=\nu^{F_{2,i}}$. Now we estimate the entropy $H_{m_i}(\mu_i*\nu_i)$. We can apply Theorem \ref{HochmanEntropy} with $\varepsilon,m$ and obtain constants $\delta=\delta(\varepsilon,m), n_0=n_0(\varepsilon,m)$. As $\nu^{x,j}$ is never $(\varepsilon,m)$-atomic, the same holds for $\nu^{x,j}_i$. Thus we see that for any $n>n_0$ there exists a subset $I_n\subset\{1,\dots,n\}$ with cardinality at least $(1-\varepsilon)n$ such that either
	\[
	H_n(\mu_i*\nu_i)\geq H_n(\mu_i)+\delta
	\]
	or
	\[
	\mu_i (\{ x \in [0,1] :  \mu_i^{x,k} \text{ is } (\varepsilon,m)\textup{-uniform}\})>1-\varepsilon, \text{ if }k\in I_n.
	\]
	In the latter case we see from the proof of Lemma \ref{EntropyToCovering} that
	\[
	H_n(\mu_i,\mathcal{D}_n)\geq (1-\varepsilon)^3n.
	\]
	This in turn implies that there exists a constant $C$ depending only on $m$ such that
	\[
	N(\supp(\mu_i*\nu_i),2^{-n})\geq N(\supp(\mu_i),2^{-n})\geq C 2^{(1-\varepsilon)^3n}.
	\]
	When the above holds at scale $n=m_i$, for all large enough $i$, we obtain the following 
	\[
	N(\supp(\mu*\nu)^{E_i},2^{-m_i})\geq C 2^{(1-\varepsilon)^3m_i}.
	\]
	Thus by equation \eqref{gamma} we see that $s'\geq (1-\varepsilon)^3-\gamma$.   Otherwise we are in the first case for infinitely many $i$ such that $n=m_i$. Then we have
	\[
	H_n(\mu_i*\nu_i)\geq H_n(\mu_i)+\delta,
	\]
	and so for such $m_i$
	\[
	N(\supp(\mu*\nu)^{E_i},2^{-m_i})\geq 2^{m_iH_{m_i} (\mu_i*\nu_i)}\geq 2^{m_i (H_{m_i}(\mu_i) + \delta )}.
	\]
	Again by equation \eqref{gamma}, this implies that for infinitely many $i$
	\[
	s'\geq H_{m_i}(\mu_i)+\delta-\gamma.
	\]	
    We have so far not made any assumptions about $\mu$. As the lower dimension of $F$ is positive, the lower dimension of $kF$ is also positive for any integer $k$. Thus we can consider a Moran construction subset of $kF$, denoted $G$ and define $\mu$ to be the measure on $G$ such that the measure of a dyadic interval is equally distributed among its next level descendants.
    
    Then since $G$ is a Moran construction as in Lemma \ref{LowerDimensionSet}, we see that $H_{m_i}(\mu_i)\geq \Ld G-\gamma$ when $i$ is large enough. Thus
	\[
	s'\geq \Ld G+\delta-2\gamma.
	\]
    
	Combining the two cases, as $\gamma>0$ can be arbitrarily chosen, we see that
	\[
	 \Lower (k+1)F \ge \Lower F+G = s' \geq \min\{\Ld G+\delta, (1-\varepsilon)^3\}.
	\]   
   As a result we see that
	\[
	\Ld (k+1)F \geq (1-\varepsilon)^3
	\]
	or
	\[
	\Ld (k+1)F \geq \Ld kF+\delta.
	\]
	Here we see that $\delta$ does not depend on $k$, therefore for all $k$ large enough
	\[
	\Ld (k+1)F \geq (1-\varepsilon)^3.
	\]
	But now we can choose $\varepsilon\to 0$ so we see that
	\[
	\lim_{n\to\infty}\Ld nF=1
	\]
as required.
\end{proof}

\subsection{Assouad dimension of distance sets}\label{distanceproofA}

We begin by proving a weaker version of Theorem \ref{distancethmA}, where one does not have the strict inequality.  This result is simpler to prove, although the method is philosophically similar and so this proof will shed light on the proof of the stronger result which follows.  

\begin{lma}\label{distancelma}
If $F\subseteq [0,1]^d$, then 
\[
\Assouad D(F) \ge \frac{1}{d}\Assouad F.
\]
\end{lma}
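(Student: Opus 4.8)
The plan is to pass to a weak tangent, exactly as in the proof of Corollary \ref{main1cor}, and then reduce the distance-set statement to a statement about a sumset (or difference set) of the tangent, to which Lemma \ref{lma1} applies. First I would invoke Lemma \ref{lma2} to find a weak tangent $E$ to $\overline{F}$ with $\Haus E = \Assouad \overline{F} = \Assouad F$; since $E$ is a weak tangent it is Ahlfors-regular-like in the weak sense that $\Haus E = \Boxd E = \Assouad E = \Assouad F$. So it suffices to produce, inside a weak tangent to $D(F)$, a set of box dimension at least $\tfrac{1}{d}\Assouad F$, and the natural candidate is (a projection/coordinate slice of) $D(E)$ or of $E-E$.

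The key geometric observation is that the distance map behaves well under the similarities defining a tangent, so that a weak tangent $E$ to $\overline F$ gives rise to a weak tangent to $D(\overline F)$ containing the ``diagonal distance set'' of $E$. More precisely, following the argument of Lemma \ref{lma2.1}: if $T_i$ are similarities with $T_i(\overline F)\cap[0,1]^d\to E$, then the corresponding rescalings of $D(\overline F)$ (rescaled by the same ratios) subconverge to some weak tangent $G$ of $D(\overline F)$, and $G$ contains the limit of the rescaled distance sets of $T_i(\overline F)\cap[0,1]^d$, which is $D(E)$ up to the irrelevant additive/translation normalisation. Hence by Proposition \ref{mt}, $\Assouad D(F) = \Assouad D(\overline F)\ge \Assouad G \ge \Boxd D(E)$, and it remains to show $\Boxd D(E)\ge \tfrac1d\Boxd E$. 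For this I would use the elementary covering inequality $N(D(E), r)^d \gtrsim N(E-E, r) \gtrsim N(E, r)$, or more directly observe that $E-E \subseteq \prod_{j=1}^d [\,-\,\text{proj}_j, \text{proj}_j\,]$-type bound combined with the fact that $|x-y|$ determines a sphere: covering $D(E)$ by $N$ intervals of length $r$ covers $E-E$ by $N$ spherical shells, each of which meets at most $\lesssim r^{1-d}$ of the $r$-cubes needed to cover the $(d-1)$-sphere, giving $N(E-E,r)\lesssim N(D(E),r)\cdot r^{-(d-1)}$; but a cleaner route is just $N(E,r)\le N(E-E,r)$ (translate by a fixed point of $E$) and the single-variable bound $N(E-E,r)\le N(D(E),r)^d$ coming from $E - E$ lying inside the preimage of $D(E)\times\cdots$ — actually the sharp and simplest statement is: for a set $A\subseteq\mathbb R^d$, $N(A, r) \le N(D(A\cup\{0\}), r)^d$ is false in general, so I will instead use the standard fact $\uBoxd D(A) \ge \tfrac1d \uBoxd A$ proved via $N(A, Cr) \lesssim N(D(A), r)^d$ (a point of $A$ is determined up to bounded ambiguity by its $d$ distances to $d+1$ generic fixed points, a Falconer-type pinned-distance counting).

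The main obstacle I expect is this last covering estimate $N(D(E), r) \gtrsim N(E, r)^{1/d}$ (equivalently $\uBoxd D(E)\ge\tfrac1d\uBoxd E$) together with checking the tangent-passing step cleanly: the distance map is not a similarity, so one must be slightly careful that rescaling $D(\overline F)$ by ratio $\rho_i$ (the contraction ratio of $T_i$) really does converge, along a subsequence, to a superset of $D(E)$ — this uses only that $|\rho_i(x-y)| = \rho_i|x-y|$ and compactness of $\mathcal K([0,C]^d)$, just as in Lemma \ref{lma2.1}. Given the covering estimate and the tangent step, the weak inequality $\Assouad D(F)\ge\tfrac1d\Assouad F$ follows by the chain $\Assouad D(F)\ge\Boxd D(E)\ge\tfrac1d\Boxd E=\tfrac1d\Assouad F$. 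The strict inequality in Theorem \ref{distancethmA} will then be obtained (in the next section) by upgrading $\Boxd D(E)\ge\tfrac1d\Boxd E$ to a strict inequality using Lemma \ref{lma1} applied to $E-E$, exploiting that $0<\Assouad E<d$ forces, in at least one coordinate direction, a projection with box dimension in $(0,1)$ and Assouad dimension $<1$.
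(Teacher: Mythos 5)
Your weak-tangent reduction is a genuinely different route from the paper (which proves Lemma \ref{distancelma} by a direct local covering argument: cover $B(x,R)\cap F$ by $r$-thin annuli centred at $x$ indexed by an $r$-discretisation of $D(F)$, choose a second centre $y$ inside each annulus so that the two families of annuli intersect in boundedly many $r$-balls, and iterate a dimension-reduction step in $\mathbb{R}^d$). The tangent step itself is fine in outline: $A\mapsto D(A)$ is continuous in the Hausdorff metric, so as in Lemma \ref{lma2.1} one gets a weak tangent $G$ of $D(\overline F)$ with $D(E)\subseteq G$, and then $\Assouad D(F)\geq \Assouad G\geq \uBoxd D(E)$ by Proposition \ref{mt}. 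But at that point all the content of the lemma has been pushed into the inequality $\uBoxd D(E)\geq \tfrac1d\uBoxd E$, and this is exactly where your proposal has a genuine gap: you assert it as a ``standard fact'' via $N(E,Cr)\lesssim N(D(E),r)^d$, justified by saying a point is determined ``up to bounded ambiguity'' by its distances to $d+1$ generic fixed points. As stated this fails. First, the pins must be points of $E$ itself for the pinned distances to lie in $D(E)$, and one cannot choose a single generic configuration of pins in $E$ in advance. Second, and more seriously, the ``bounded ambiguity'' claim is false near degenerate configurations: if a point lies close to the affine span of the pins, the relevant spheres meet nearly tangentially, and the intersection of the $r$-thickened spheres has diameter of order $\sqrt{rR}$ rather than $r$, so the naive count $N(D(E),r)^d$ does not bound $N(E,Cr)$ with uniform constants. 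Handling precisely this tangency issue is what the paper's proof does (the restriction $jr<1.9\,ir$, the auxiliary third point $z$, and in higher dimensions the iterated reduction of annulus intersections to lower-dimensional annuli); it is also why the box-dimension inequality $\uBoxd D(F)\geq \uBoxd F/d$ appears in the paper as the first part of Theorem \ref{distancethmB}, with its own nontrivial pigeonhole-plus-dimension-reduction proof, rather than as a one-line covering remark.

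So as written the argument is not complete: the chain $\Assouad D(F)\geq \uBoxd D(E)\geq \tfrac1d\uBoxd E=\tfrac1d\Assouad F$ is only as strong as its middle link, which you have not proved and whose sketched justification would fail. The reduction could be repaired by invoking the independently proved first part of Theorem \ref{distancethmB} applied to $E$ (the paper's proof of that result does not use Lemma \ref{distancelma}, so there is no circularity), but then the tangent machinery buys nothing over the paper's direct argument, which works at the level of $F$ itself and needs no tangents at all; note also that several of your intermediate covering claims (e.g.\ $N(E-E,r)\lesssim N(D(E),r)\cdot r^{-(d-1)}$) point in the wrong direction for the desired bound, and the closing suggestion of getting the strict inequality in Theorem \ref{distancethmA} from Lemma \ref{lma1} applied to a projection of $E-E$ is not how the strict version can be made to work, since a one-dimensional projection loses the $d$-dependent factor you need to beat.
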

\begin{proof}
We first deal with the 2-dimensional case, and then our method will be generalised to higher dimensions. 

Let $F\subseteq [0,1]^2$, $s = \Assouad D(F)$ and  $\varepsilon>0$.  Let $x \in F$ and $0<r<R<1$.  We wish to construct an $r$-cover of $F \cap B(x,R)$ using the  distance set. The Assouad dimension tells us roughly how many intervals of length $r$ are needed to cover part of the distance set. If an interval, say $[a,a+r]$, is needed in the cover of $D(F)$ then there is a point $x\in F$ such that the annulus $\left\{ y \colon \lvert y-x \rvert \in [a,a+r] \right\}$ intersects $F$ at least once. For $x' \in \mathbb{R}^2$ and $a,\Delta \in [0,1]$ we define the annulus around $x'$ with width $\Delta$ and inner radius $a$ by
\[
S(x',a,\Delta) = \left\{ y\in \mathbb{R}^2 \colon \lvert y-x' \rvert \in [a,a+\Delta] \right\}.
\]
In fact we will only use annuli of the form $S(x',i\Delta, \Delta)$ for some $\Delta$ and $i=0,1,2,\ldots$.  We first ask, how many of the annuli of this form can intersect $F$. Let $I\subset \mathbb{N}$  be the set of  integers $i$ such that
\[
D(F) \cap [ir, (i+1)r] \neq \emptyset.
\]
It follows that
\[
\#\left( I \cap \left[ 0, \frac{R}{r} \right] \right) \le C \left( \frac{R}{r} \right)^{s+\varepsilon}
\]
where $C = C(\varepsilon)>0$ is the constant coming from the definition of the Assouad dimension of $D(F)$.   Suppose $i\in I$ is such that $F \cap S(x,ir,r) \neq \emptyset$ and $i \geq 10$.  Choose  $y \in F \cap S(x,ir,r)$ and consider annuli $S(y,jr,r)$ around $y$ for $j=0,1,2,\ldots$.  Observe that if $S(x,ir,r) \cap S(y,jr,r) \cap F \neq \emptyset$, then $j \in I$.  Moreover, if $jr < 1.9 ir$ then $S(x,ir,r) \cap S(y,jr,r)$ can be covered by a uniform constant $C'$ many balls of radius $r$.  It remains to cover $F \cap S(x,ir,r) \setminus B(y, 1.9ir)$.  If this is empty, then we are done, and if it is not empty then fix $z \in F \cap S(x,ir,r) \setminus B(y, 1.9ir)$ and cover the remaining portion as above using $z$ in place of $y$. It follows that
\[
N(S(x,ir,r) \cap F) \leq 2 C' \#\left( I \cap \left[ 0, \frac{R}{r} \right] \right)  \leq 2 C' C \left(\frac{R}{r}\right)^{s+\varepsilon}.
\]
Since $B(x,10r)$ can be covered by a constant $C''$ many $r$-balls, we conclude
\[
N(B(x,R) \cap F, r) \leq C'' +  2 C' C \left(\frac{R}{r}\right)^{s+\varepsilon} \times  \#\left( I \cap \left[ 10, \frac{R}{r} \right] \right)  \leq C''+ 2 C' C^2 \left(\frac{R}{r}\right)^{2s+2\varepsilon}
\]
which proves that $\Assouad F \le 2 s+2 \varepsilon$ and letting $\varepsilon \to 0$ yields $\Assouad D(F) \ge \Assouad F/2$ as required.

The $d$-dimensional case follows precisely from the above argument plus an observation we call `dimension reduction'.  The main idea above was to divide the plane into two collections of $r$-thin annuli so that the  intersection of two annuli (one from each collection) was essentially an $r$-ball. We do the same thing in the $d$-dimensional case, but this time the intersection of two annuli is essentially a  $(d-1)$-dimensional annulus which is also $r$-thin.   This dimension reduction strategy is iterated $(d-2)$-times  until we end up with 2-dimensional annuli and then our previous covering argument applies. We end up estimating
\[
N(B(x,R) \cap F, r) \leq  C(d) \#\left( I \cap \left[ 0, \frac{R}{r} \right] \right) ^d \leq C(d) C^d \left(\frac{R}{r}\right)^{ds+d\varepsilon},
\]
where $C(d)$ is a constant depending on the ambient spatial dimension.  This proves the desired result. 
\end{proof}

Adapting this proof to obtain the strict inequality in Theorem \ref{distancethmA} is non-trivial but follows the same idea with an additional application of the inverse theorem.

\begin{proof}[Proof of Theorem \ref{distancethmA}]
Again we start with the planar case and assume $\Assouad D(F)=s \in (0,1)$, noting that if $\Assouad D(F)=1$, the result is trivial.  Let $\varepsilon \in (0,1/2)$ and fix $x \in F$ and $0<r<R<1$. Follow the argument and notation above exactly, until it comes to covering $S(x,ir,r)$. Here, instead of decomposing this annulus into balls of radius $r$ we use relatively long and thin rectangles and then cover each rectangle separately. 

First we cover $S(x,ir,r)$ by an optimal number of equally spaced $2r$ by $r\sqrt{2i-1} $ rectangles as illustrated in Figure \ref{fig:deco}.
\begin{figure}[h]
	\caption{Decomposing an annulus into rectangles}
	\centering
	\begin{tikzpicture}[scale=0.5, every node/.style={scale=0.5}]
\node [draw, thick, circle, minimum width=400pt] {};
\node [draw, thick, circle, minimum width=355pt] {};
\node [draw, thick, circle, minimum width=10pt, fill] {};
\draw (0,0) -- (7,0);
\node[label=below:{\LARGE $ir$}] at (2,0) {};
\node[label=below:{\LARGE $2r$}] at (5.6,-3) {};
\node[label=left:{\LARGE $r\sqrt{2i-1}$}] at (5.3,1) {};
\draw [very thick] (1.743,6.01812) -- (6.04187,1.71937);

\transparent{0.5}\draw [fill=black!30, very thick] (-5.4,-3) rectangle (-7,3);
\draw [fill=black!30, very thick] (5.4,-3) rectangle (7,3);
\draw [fill=black!30, very thick] (1.743,6.01812) -- (2.8284,7.07) -- (7.07106,2.82842) -- (6.04187,1.71937);
\end{tikzpicture}
	\label{fig:deco}
\end{figure}
Suppose $i\in I$ is such that $F \cap S(x,ir,r) \neq \emptyset$ and $i \geq 10$.  Choose  $y \in F \cap S(x,ir,r)$ and consider distances from $y$ to points in $S(x,ir,r)$ as above.  It follows that there is an absolute constant $A$ such that at most
\[
A \left(\frac{R}{\sqrt{i}r} \right)^{s+\varepsilon}
\]
of the previously defined rectangles covering $S(x,ir,r)$ can intersect $F \cap S(x,ir,r)$.  We will cover the part of $F$ lying inside each of these rectangles separately using the natural partition of the rectangle into squares of sidelength $2r$ oriented with the rectangle.   Fix a rectangle and denote the  associated collection of $2r$-squares which optimally cover the part of $F$ inside this rectangle by $\mathcal{S}$.  Also let $D= D(F\cap S) \subseteq D(F)\cap [0,r\sqrt{2i-1}]$.

For each $S\in \mathcal{S}$ we write  $x_S$ to denote the centre of the square $S$ and let $X$ be the set of all $x_S$. Then 
\[
B(x_{S_1}-x_{S_2}, 4 \times 2r) \cap (F-F) \neq \emptyset
\]
for all  $S_1,S_2 \in S$. Therefore there is a point $y\in D$ such that $\lvert y - \lvert x_{S_1}-x_{S_2} \rvert \rvert \le 4\times 2r$. From this fact we see that the difference set $X-X$ and the set of distances $D$ are closely related in that
\[
N(D, 4\times 2r) \le N(X-X, 2r) \le 4 N(D, 4\times 2r).
\]
All the points in $X$ lie on the same straight line segment and therefore we can consider them as a subset of the unit interval and thus  use Hochman's inverse theorem. The tree $T_{D}$  associated to $D$  is  a subtree of $T_{D(F)}$ and by our assumption that $D(F)$ does not have full Assouad dimension, there exists $\varepsilon_1>0, m_0>0$ such that $T_{D(F)}$ (and therefore $T_D$) does not have any $(\varepsilon_1,m)$-branching subtrees with $m$ greater than or equal to $m_0$. We can choose $\varepsilon_1$ to be arbitrarily small.

Recall the inverse theorem. For any pair of numbers $m,\varepsilon_1$, there is a $\delta(\varepsilon_1,m)>0$ and a $\rho_0\in (0,1)$ such that whenever $\rho<\rho_0 $, for any finite set $K \subseteq [0,1]$, by formula (\ref{treecountingbd}) from the proof of Lemma \ref{lma1} either $N(K-K,\rho) > N(K,\rho)^{1+\delta}$ or $T_K$ contains $(\varepsilon_1,m)$-full branching subtrees or
    \[
	N(K,\rho)\leq 2^{-5\varepsilon_1\log \rho}.
	\]
Now we can properly  choose  our parameters $\varepsilon_1$ and $m$. Since $s>0$ we can choose $\varepsilon_1$ such that
\[
5\varepsilon_1  <\frac{s+\varepsilon}{\delta + 1}\tag{*}
\]
and $m=[\log_2 1/\varepsilon_1] > 2m_0+1$, where $\delta$ is the $\delta(\varepsilon_1,m)$ from the inverse theorem. When choosing our $\varepsilon_1$, the $\delta$ will shrink as $\varepsilon_1$ does so there always exists an $\varepsilon_1$ satisfying the inequality. From now on $\varepsilon_1$ and $m$ shall be considered as constants. As a consequence, $\delta$ and $\rho_0$ can be considered as constants as well.

In the following, we shall assume that $i$ is large enough so that $\frac{1}{\sqrt{2i-1}}<\rho_0$. This will not cause any loss of generality (for example we can replace the condition $i\geq 10$ by $i\geq \rho^{-10}_0$).

The tree $T_X$ associated with $X$ cannot have any full branching subtrees of height $m$ as this would imply there exists a full branching subtree of height at least $m_0$ in $T_{D}$ which contradicts the assumption that $T_{D(F)}$ does not have $(\varepsilon_1,m)$-full branching subtrees with $m$ greater than or equal to $m_0$. We  scale our set $X$ by $(r\sqrt{2i-1})^{-1}$ to obtain a set $X' \subset  [0,1]$, noting that such rescaling will not change the tree structure and therefore applying the inverse theorem to $X'$ as we did with $K$ above, with  $\rho=2/\sqrt{2i-1}$, we see that either 
\[
N\left(X',\frac{2}{\sqrt{2i-1}}\right)^{1+\delta}<N\left(X'-X',\frac{2}{\sqrt{2i-1}}\right)
\]  
or
\[
N\left(X',\frac{2}{\sqrt{2i-1}}\right)\leq \left(\frac{2}{\sqrt{2i-1}}\right)^{-3\varepsilon_1}.
\]
Scaling covers back to the original set $X$,  we see that either
\[
N(X, 2r)^{1+\delta} < N(X-X, 2r) \le 4N(D, 4\times 2r) \le 4 C_\varepsilon \left(\frac{r\sqrt{2i-1}}{4\times 2r}\right)^{s+\varepsilon}=4C_\varepsilon \left( \frac{\sqrt{2i-1}}{8}\right)^{s+\varepsilon}
\]
or
\[
N(X,2r) \le \left(\frac{2}{\sqrt{2i-1}}\right)^{-5\varepsilon_1}.
\]
Recalling  (*), this guarantees that there is a constant $A'$ such that, for each rectangle $S\in\mathcal{S}$, we have
\[
N(S\cap F , 2r) \le A' \sqrt{i}^{\frac{s+\varepsilon}{1+\delta}}.
\]
This holds for all $i\ge \max\left\{C_{\varepsilon}^2, \frac{1}{\rho_0^{10}},10\right\}=:i_0$. For smaller values of $i$ we only need a constant $C(\varepsilon,\rho_0)$ of balls to cover the rectangles. In conclusion
\[
N(B(x,R)\cap F,r) \le 4A'A \sum_{i\in I \cap \left[i_0,R/r\right]} \sqrt{i}^{\frac{s+\varepsilon}{1+\delta}} \left(\frac{R}{r\sqrt{i}} \right)^{s+\varepsilon} + C(\varepsilon,\rho_0).
\]
We bound this sum using the following simple general inequality.  Let  $Z \subset \mathbb{Z}^+$ be a finite set of positive integers and $t \in (0,1)$.  Then
\[
\sum_{i \in Z} i^{-t} \leq \sum_{i =1}^{\#Z} i^{-t} \leq \int_0^{\#Z} x^{-t} dx = \frac{1}{1-t}(\#Z)^{-t+1}.
\]
Applying this inequality in our setting, where we have $t= \frac{s+\varepsilon}{2}(\frac{1}{1+\delta}-1) \in (0,3/4)$, yields
\[
\sum_{i\in I \cap [i_0,R/r]}i^{\frac{s+\varepsilon}{2}(\frac{1}{1+\delta}-1)} \le 4\left(\# I \cap \left[i_0,R/r\right]\right)^{\frac{s+\varepsilon}{2}(\frac{1}{1+\delta}-1)+1}
\]
and therefore
\[
N(B(x,R)\cap F, r) \le A''\left( \frac{R}{r} \right)^{(s+\varepsilon)(\frac{s+\varepsilon}{2}(\frac{1}{1+\delta}-1)+2)}+C(\varepsilon,\rho_0)
\]
for a uniform constant $A''$.  This proves that 
\[
\Assouad F \leq (s+\varepsilon)\left(\frac{s+\varepsilon}{2}\left(\frac{1}{1+\delta}-1\right)+2\right)
\]
and letting $\varepsilon \to 0$ yields
\[
\Assouad F \leq 2s - \frac{s^2\delta }{2(1+\delta)} < 2s
\]
as required.

For sets in $\mathbb{R}^d$ we use the dimension reduction technique introduced in the previous lemma and then use the rectangles from this proof instead of picking two points in an annulus. This gives us 
\[
\Assouad F \le (d-2)(s+\varepsilon)+ \left(s+\varepsilon\right)\left(\frac{s+\varepsilon}{2}\left(\frac{1}{1+\delta}-1\right)+2\right)
\]
and the right hand side is strictly less than $d s $ for small enough $\varepsilon$, concluding the proof.
\end{proof}

\subsection{Box dimension of distance sets}\label{distanceproofB}

In this section we show that a similar distance set result holds for the upper box dimension. Unlike the Assouad dimension, which is `local', the box dimensions are `global'. This prevents the distance set cutting method introduced in the previous section from working. Instead, we use the pigeonhole principle iteratively to reduce the dimension down to the 1-dimensional case and then we can apply the inverse theorem.

\begin{proof}[Proof of Theorem \ref{distancethmB}]
Let $r=2^{-n}$ for some integer $n>0$. Let $C_F(r)$ and $C_{D(F)}(r)$ be the collections of cubes in the standard $r$-meshes which intersect $F$ and $D(F)$, respectively, and write $N(F,r)$ and $N(D(F),r)$ as the cardinalities of $C_F(r)$ and $C_{D(F)}(r)$, respectively.

There are $N(F,r)^2$ pairs of cubes in $C_F(r)$ and for each pair $(i,j)$, $i,j \in C_F(r)$, the set of distances between the points of $F$ in one cube and the points in the second, denoted as $D(i,j)$, is contained in an interval of length $c_d r$ where $c_d$ is a constant depending only on $d$. Clearly $D(i,j) \subset D(F)$. 

For each cube $K\in C_{D(F)}(r)$, let
\[
n_K= \# \left\{ (i,j) \in C_F(r) \times C_F(r) \colon D(i,j) \cap K \neq \emptyset \right\}.
\]
We have the following inequality
\[
\sum_{K\in C_{D(F)}(r)} n_K \ge N(F,r)^2
\]
and there must exist at least one $K_0\in C_{D(F)}(r)$ such that $n_{K_0}\geq \frac{N(F,r)^2}{N(D(F),r)}$. By the pigeonhole principle there exists at least one $i\in C_F(r)$ such that
\[
\#  \left\{j\in C_{F}(r) \colon D(i,j) \cap K_0 \neq \emptyset \right\} \ge \frac{n_{K_0}}{N(F,r)}.
\]
In other words, there exists an $x\in F$ and $y \in D(F)$ such that the annulus $S(x,y,c_d r)$ intersects at least $\frac{n_{K_0}}{N(F,r)}$ many cubes in $C_F(r)$. We assume $y$ is `large' compared to $r$, say $y>Mr$, for otherwise the number of cubes intersected by the annulus is bounded above by a constant
\[
\frac{n_{K_0}}{N(F,r)} \le M^d.
\]
Here $M$ is a constant which will be specified later.

We wish to further decompose this annulus. An easy first step is to split it into $2^d$ quadrants, that is, we perform a change of basis so that $x$ is the origin and regroup elements of the annulus whose coordinates all have the same signs, so $\alpha=(\alpha_1,\ldots,\alpha_d)$ and $\beta=(\beta_1,\ldots,\beta_d)$ are in the same quadrants  if $\text{sign } \alpha_i = \text{sign } \beta_i$ for all $i=1,\ldots,d$. Again by the pigeonhole principle at least one of these quadrants will intersect at least 
\[
N_1= 2^{-d} \frac{n_{K_0}}{N(F,r)}
\]
many cubes from $C_F(r)$. This reduction will ensure a certain transversality condition holds below.
    
Now we  iterate the above argument. In the chosen quadrant there are $N_1^2$ many pairs of cubes that intersect $S(x,y,c_d r)$ and $F$. The distances between points in these cubes are all contained within a $c_dr$-interval, and by the same pigeon hole strategy as above we find a point $x_2\in F \cap S(x,y,c_d r)$ and a $y_2\in D(F)$ such that $S(x_2,y_2,c_dr)$ intersects at least
	\[
	\frac{N^2_1}{N_1 N(D(F),r)}
	\]
	many cubes which are in $C_F(r)$ and at the same time intersect $S(x,y,c_d r)$. The intersection of two specific $d$-dimensional $r$-thin annuli is  contained in a $c'_d r$-neighbourhood of a $(d-2)$-sphere, for some constant $c_d'$ depending only on $d$. Decompose the sphere into $2^{d-1}$ `quadrants'  as before (where we think of the centre of the sphere as the origin), and we can find a quadrant  intersecting at least
	\[
	N_2=2^{-d+1}\frac{N^2_1}{N_1 N(D(F),r)}
	\]
	many cubes in $C_F(r)$.

We can perform the above `dimension reduction' argument  $(d-1)$ times to end up with (a piece of) a $1$-sphere whose $c'_dr$ neighbourhood intersects at least
	\[
	c''_d \frac{n_{K_0}}{N(D(F),r)^{d-2}N(F,r)}
	\]
	many cubes in $C_F(r)$. Here $c''_d$ is another constant depending on $d$.   Also if for some $m$, we have $y_m<Mr$, then 
	\[
	\frac{n_{K_0}}{N(D(F),r)^{m}}\leq M^d.
\]
In the case that each $y_m$ is larger than $Mr$, we end up with a piece of a $1$-sphere whose $c'_d r$-neighbourhood (which is just an annulus) contains a large number of cubes in $C_F(r)$. Our first observation is that there exists an absolute constant $a_d>0$ such that for all $r$ small enough we have the following inequality
\[
\frac{n_{K_0}}{N(D(F),r)^{d-2}N(F,r)}\leq a_d N(D(F),r)\tag{$\dagger$}.
\]
To see this, recall that in the last of the above iterations we found disjoint $r$ cubes. Those cubes are contained in a neighbourhood of radius $c_d r$ of a (piece of a) $1$-sphere. We  enumerate these cubes by $\{C_1,C_2,\dots, C_Z\}$ for a  suitable integer $Z$, and choose $x_i\in F\cap C_i$ for all $i\in\{1,\dots,Z\}$. Consider the following set
\[
X=\{|x_1-x_2|,\dots,|x_1-x_Z|\}.
\] 
It is not hard to show that there exists an absolute constant $v_d>0$ such that $X$ is a $v_dr$-separated set. Also it is clear that $X\subset D(F)$. From here we see that inequality  ($\dagger$) follows. Then we see that, by the choice of $K_0$, the following inequalities hold
\[
c''_d \frac{N(F,r)}{N(D(F),r)^{d-1}}\leq c''_d\frac{n_{K_0}}{N(D(F),r)^{d-2}N(F,r)}\leq a_d N(D(F),r).
\]
This implies that for all $r$ small enough
\[
N(F,r)\leq \frac{a_d}{c''_d} N(D(F),r)^d,
\]
and therefore
\[
\uBoxd F\leq d \, \uBoxd D(F).
\]
This concludes the first part of this theorem. To see the second part we shall use the circle decomposition as well as the inverse theorem as in the proof of Theorem \ref{distancethmA}. We want to make use of the arithmetic structure of the set $Y=\{x_1,\dots, x_Z\}.$ However, $Y$ has `curvature' and so we cannot directly apply the inverse entropy theorem for $Y-Y$. As in the proof of Theorem \ref{distancethmA} we first decompose  $Y$ into almost straight pieces and use the inverse entropy theorem for each straight piece. Then we see that for all small enough $r>0$, if $y\in D(F)$ and $y>cr$ then the covering number $N(Y,r)$  can be bounded from above by
	\[
	C N\Big(D(F)\cap[0,\sqrt{yr}],r\Big)^{\frac{1}{1+\delta}}N(D(F)\cap [0,2\pi y],\sqrt{yr}),
	\] 
where $c, \delta,C>0$ are constants that depend on $F$.   We  now fix  $M=c$ above. We see that for a constant $c'''_d>0$
\[
	c'''_d \frac{n_{K_0}}{N(D(F),r)^{d-2}N(F,r)}\leq N(Y,r).
\]
Let $\varepsilon>0$.  Appealing directly to the box dimension and Assouad dimension of $D(F)$, we can find an absolute constant $C' = C'(\varepsilon)>0$ such that
	\[
	N(D(F)\cap[0,\sqrt{yr}],r)\leq C'(\sqrt{y/r})^{\Assouad D(F)+\varepsilon},
	\]
	\[
	N(D(F)\cap [0,2\pi y],\sqrt{yr})\leq  C'(\sqrt{y/r})^{\Assouad D(F)+\varepsilon}
	\]
   and
          \[
N(D(F),r)\leq  C'(1/r)^{\uBoxd D(F)+\varepsilon}.
\]
Combining these estimates with the inequality established above yields
	\[
	c'''_d \frac{N(F,r)}{N(D(F),r)^{d-1}}\leq c'''_d\frac{n_{K_0}}{N(D(F),r)^{d-2}N(F,r)}\leq CC'^2 (\sqrt{y/r})^{(\Assouad D(F)+\varepsilon)/(1+\delta)}(\sqrt{y/r})^{\Assouad D(F)+\varepsilon}
	\]
and this implies that
\begin{eqnarray*}
N(F,r)&\leq& C{C'}^{d+1}{c'''_d}^{-1} (1/r)^{(d-1)(\uBoxd D(F)+\varepsilon)} (\sqrt{y/r})^{(\Assouad D(F)+\varepsilon)/(1+\delta)}(\sqrt{y/r})^{\Assouad D(F)+\varepsilon}\\
&\le &C{C'}^{d+1}{c'''_d}^{-1}\left(\frac{1}{r}\right)^{(d-1)\uBoxd D(F)+ (1+1/(1+\delta))\Assouad D(F)/2+d\varepsilon}.
\end{eqnarray*}
Therefore we see that
    \[
    \uBoxd F \leq \left( d-1\right) \uBoxd D(F)+\frac{1+(1+\delta)^{-1}}{2} \Assouad D(F) < \left( d-1\right) \uBoxd D(F)+\Assouad D(F)
    \]
as required.	
\end{proof}

\section{Further comments and examples}

As we proved in Corollary \ref{main1sym}, if a set $F\subset\mathbb{R}$ satisfies
\[
\uBoxd 2F=\uBoxd F,
\]
then  either $\uBoxd F=0$ or $\Assouad F=1$. A partial converse also holds trivially. If $\uBoxd F=0$ then 
\[
\uBoxd 2F=\uBoxd F=0 \text{ and } \uBoxd D(F)=\uBoxd F=0.
\]
For Assouad dimension, the situation is rather different.  Concerning distance sets, \cite[Example 2.6]{Fr} provides  an example of a $F\subset [0,1]$ with $\Assouad F=0$ and $\Assouad D(F)=1$ and we can easily use this example to build similar examples for sumsets. Let $F_1=F\cup (-F)$, and observe that $\Assouad F_1=0$ and $2F_1\supset D(F)$, and so $\Assouad 2F_1=1$. 

Positive lower dimension is not a necessary condition for the box dimensions of the iterated sum sets to approach $1$.  We demonstrate this by considering a  simple example where  $F =\{1/k\}_{k\in\mathbb{N}}$.  Clearly, the lower (and modified lower) dimension of $nF$ is 0 for all $n$, but we can show that $\lBoxd n F \to 1$ (even at an exponential rate).  
\begin{prop}
For $F =\{1/k\}_{k \in\mathbb{N}}$ and $n \geq 1$, we have
\[
\lBoxd n F\geq 1-2^{-n}.
\]
\end{prop}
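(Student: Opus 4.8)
The plan is to estimate $N(nF, r)$ from below for $r = 2^{-N}$ by exhibiting many well-separated points in $nF$ near the right scale. The key observation is that for $F = \{1/k\}_{k \in \mathbb{N}}$, the reciprocals $1/k$ with $k \approx t$ are spaced roughly $1/t^2$ apart, so $F$ looks like a $t^{-2}$-separated set of about $t$ points inside an interval of length about $1/t$ — this is the ``dimension $1/2$'' phenomenon. Adding $n$ copies should compound this: iterating the heuristic $n$ times takes the exponent $1/2$ to $1 - 2^{-n}$.

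First I would set up the induction on $n$. For the base case $n=1$, one checks directly that $\lBoxd F = 1/2 \geq 1 - 2^{-1}$: indeed, for $r = 2^{-N}$, the points $1/k$ for $k$ ranging over an interval around $2^{N/2}$ are pairwise separated by at least $\gtrsim r$ and there are $\gtrsim 2^{N/2}$ of them, giving $N(F,r) \gtrsim r^{-1/2}$. For the inductive step, suppose $\lBoxd nF \geq 1 - 2^{-n} =: s_n$. I want to show $\lBoxd (n+1)F \geq 1 - 2^{-(n+1)}$. The idea is: inside a small interval $[1/(k+1), 1/k] \approx [1/k, 1/k + 1/k^2]$, the set $(n+1)F \cap [\text{something}]$ contains a rescaled copy of $nF$ (by adding $1/k$ to $nF$, then restricting), and $nF$ restricted to a scale-$1/k^2$ window itself — after rescaling by $k^2$ — still has box dimension close to $s_n$ on a range of scales. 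Combining a scale-$1/t$ worth of translates $\{j/t + nF : \text{suitable } j\}$ living at separation $\sim 1/t^2$ with the internal structure of $nF$ at scales below $1/t^2$ should multiply the covering numbers.

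Concretely, I would fix $r = 2^{-N}$ and choose an intermediate scale $\rho = 2^{-M}$ with $M$ chosen so that $\rho$ is a ``good'' lower-box scale for $nF$ and $r/\rho$ sits in a good range too. Take $t \approx \rho^{-1/2}$ (so $1/t \approx \rho^{1/2}$, $1/t^2 \approx \rho$); the translates $1/k + (\text{points of } nF \text{ of size} \lesssim 1/k)$ for $k \in [t, 2t]$ are separated at scale $\gtrsim 1/t^2 = \rho$, giving a factor $\gtrsim t \approx \rho^{-1/2}$. Within each such cluster, $nF$ contributes a factor $\gtrsim (\rho/r)^{s_n - o(1)}$ by the inductive hypothesis (after rescaling the cluster of size $\rho$ up to unit size). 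So $N((n+1)F, r) \gtrsim \rho^{-1/2} (\rho/r)^{s_n}$; optimizing the choice of $\rho = r^{\theta}$ over $\theta \in (0,1)$ gives exponent $\max_\theta \left( \tfrac{1}{2}\theta + s_n(1-\theta) \right)$ — wait, since $s_n > 1/2$ this is maximized at $\theta \to 0$, giving just $s_n$, which is not enough. The correct bookkeeping must instead \emph{add} the structure rather than interpolate: the separated translates live at \emph{coarse} scales (between $1/t$ and $1/t^2$) and the internal $nF$-structure at \emph{fine} scales (below $\rho = 1/t^2$), so $N((n+1)F, r) \gtrsim N(F, \rho)^{1} \cdot \inf_{\text{clusters}} N(nF\text{-piece}, r)$. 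With $N(F,\rho) \gtrsim \rho^{-1/2}$ and each fine piece giving $\gtrsim (\rho/r)^{s_n}$, and choosing $\rho = r^{1/(2-s_n)}$ so that $\tfrac{1}{2}\cdot\tfrac{1}{2-s_n} \cdot (1) + s_n \cdot (1 - \tfrac{1}{2-s_n})$... I would compute: exponent $= \tfrac12 \alpha + s_n(1-\alpha)$ with $\alpha = \tfrac{1}{2-s_n}$ does not obviously give $s_{n+1}$. The honest approach is to track that one gains a genuinely independent factor of $\rho^{-1/2}$ \emph{on top of} resolving $nF$ down to scale $r$, i.e. $N((n+1)F,r) \gtrsim \rho^{-1/2} \cdot (1/r)^{s_n}/(1/\rho)^{s_n}$ and then one wants $\rho$ as \emph{small} as possible subject to $\rho \geq r^{1/2}$-ish (the constraint that the cluster separation $\rho$ exceeds the internal resolution scale and that $t \approx \rho^{-1/2}$ points genuinely fit). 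Setting $\rho = r^{1/2}$ gives exponent $\tfrac14 + s_n - \tfrac{s_n}{2} = \tfrac14 + \tfrac{s_n}{2} = \tfrac14 + \tfrac12(1 - 2^{-n}) = \tfrac34 - 2^{-(n+1)}$, which is weaker than $1 - 2^{-(n+1)}$. So the naive single-scale split is lossy, and the real proof must use the \emph{full} self-similar recursion: $F$ near $1/k$ rescaled is again (essentially) $F$, so $(n+1)F$ near $1/k$ contains $1/k + nF$, and $nF$ itself has $n$ nested levels of this structure — exploiting all $n+1$ levels simultaneously is what produces the exponent $1 - 2^{-(n+1)}$. This nested extraction is the main obstacle: one must carefully choose a chain of scales $r = \rho_0 < \rho_1 < \cdots < \rho_{n+1} = $ const, with $\rho_j/\rho_{j+1}$ comparable, locate at each level $j$ a cluster of $\sim (\rho_{j+1}/\rho_j)^{1/2}$ well-separated translates of (a rescaled) $jF$, and multiply the $n+1$ resulting factors; verifying separation at each level and that the ``good scale'' requirements for the lower box dimension can be met simultaneously along the chain is the delicate part. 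I expect the cleanest route is in fact \emph{not} induction on $n$ but a direct construction: fix $N$, pick integers $1 \leq k_1 \leq k_2 \leq \cdots \leq k_n$ with each $k_{j+1}$ in a dyadic-type range determined by $k_j$, form $\sum_{j=1}^n 1/k_j \in nF$, and show that the number of distinct such sums that are $2^{-N}$-separated is $\gtrsim 2^{N(1-2^{-n})}$ by a counting argument on the multi-indices $(k_1, \dots, k_n)$, using that $\sum 1/k_j$ determines a ``thin'' but numerous family. This combinatorial count, showing the map $(k_1,\ldots,k_n) \mapsto \sum 1/k_j$ is sufficiently injective at scale $2^{-N}$ on a set of $\gtrsim 2^{N(1-2^{-n})}$ tuples, is where the arithmetic of reciprocals must be used, and is the crux of the argument.
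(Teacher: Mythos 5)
There is a genuine gap: your argument never actually establishes the exponent $1-2^{-n}$. The two schemes you compute honestly (the single intermediate scale $\rho$, and $\rho=r^{1/2}$) give exponents $s_n$ and $\tfrac34-2^{-(n+1)}$ respectively, both short of the target, and the route you finally advocate --- counting tuples $(k_1,\ldots,k_n)$ and showing the map $(k_1,\ldots,k_n)\mapsto\sum_j 1/k_j$ is ``sufficiently injective'' at scale $2^{-N}$ on $\gtrsim 2^{N(1-2^{-n})}$ tuples --- is exactly the part you defer as ``the crux'' without proving it. That step is not routine: sums of reciprocals collide and near-collide in complicated ways, and any separation-based count has to confront this. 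So as written the proposal is a plan with the decisive step missing.

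The missing idea, and the reason the paper's proof is short, is to abandon counting separated points and instead bound \emph{gaps}: say $E$ is $\delta$-dense in an interval $I$ if every point of $I$ lies within $\delta$ of $E$. Since consecutive reciprocals $1/k$ below height $\sqrt t$ have gaps $\tfrac{1}{k(k-1)}\le t$, one shows: if $E$ is $\delta$-dense in $[0,t]$, then for any $y\in[0,\sqrt t]$ there is $1/k\le y$ with $y-1/k\le t$, whence $E+F$ is $\delta$-dense in $[0,\sqrt t]$. Starting from the observation that $F$ itself is $\delta$-dense in $[0,\sqrt\delta]$ and iterating, $nF$ is $\delta$-dense in $[0,\delta^{2^{-n}}]$, so $N(nF,\delta)\ge\delta^{2^{-n}}/\delta$ for \emph{every} small $\delta$, giving $\lBoxd nF\ge 1-2^{-n}$ directly. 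Density is monotone under adding the next copy of $F$ and requires no injectivity, which is precisely why it evades the obstacle your counting approach gets stuck on; note also that the square-root recursion $t\mapsto\sqrt t$ is what produces the exponent $1-2^{-n}$, the quantity your fixed-ratio scale chains kept falling below.
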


\begin{proof}
Given $\delta>0$ we say a set $E$ is $\delta$-dense in a closed  interval $I$ if every point in $I$ is at distance less than $\delta$ from some  point in $E$.  Suppose $E$ is $\delta$-dense in $[0,t]$ for some small $t \in (0,1)$.  Choose $k \in \mathbb{N}$ such that $1/k < \sqrt{t} \leq 1/(k-1)$.  It follows that $\sqrt{t}-1/k \leq t$ and so $E+F$ must be $\delta$-dense in $[0,\sqrt{t}]$.  Since $F$ is easily seen to be $\delta$-dense in $[0,\sqrt{\delta}]$ it follows by induction that $nF$ is $\delta$-dense in $[0, \delta^{2^{-n}}]$.  Therefore
\[
N(nF, \delta) \  \geq \ \delta^{2^{-n}} / \delta 
\]
and so $\lBoxd n F\geq 1-2^{-n}$ as required.
\end{proof}

\subsection{Self-similar sets} \label{selfsimsection}

If one considers restricted families of sets, then often more precise information can be obtained concerning the sumsets.  A particular setting which has received a lot of attention is that of  \emph{self-similar sets}, see \cite[Chapter 9]{Fa} for basic definitions and background on iterated function systems (IFSs).  In \cite{PS} it was shown that if $F \subseteq [0,1]$ is a self-similar set where two of the defining  contraction ratios $r_1,r_2$ satisfy $\frac{\log r_i}{\log r_j}\notin\mathbb{Q}$ then
\[
\Haus 2F=\min \{1,2\Haus F\}.
\]
Takahashi \cite{Ta} proved that if the sum of the dimensions of two self-similar Cantor sets  exceeds 1, then one can find new Cantor sets, arbitrarily close to the original ones, such that there is an interval in the sumset. Other related papers where the problem of finding an interval in the sumset or iterated sumsets of Cantor sets  include \cite{astels, hare1, hare2}.

We provide a simple argument demonstrating that the  dimensions of the iterated sumsets of a self-similar set reach 1 in \emph{finite time}. 

\begin{prop}\label{IFSthm}
Let $F \subseteq \mathbb{R}$ be a self-similar set which is not a singleton.  Then for some $n \geq 1$, the iterated sumset $nF$ contains an interval and therefore has Hausdorff, box and Assouad dimensions equal to 1.
\end{prop}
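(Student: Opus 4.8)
The plan is to reduce to the case of a self-similar set on the line whose convex hull is all of $[0,1]$ and whose generating IFS contains two maps with sufficiently different fixed points, and then to show that applying the sumset operation repeatedly forces the set to become $\delta$-dense in an interval at smaller and smaller scales $\delta$, eventually forcing an interval. First I would normalise: after an affine change of coordinates we may assume $F \subseteq [0,1]$ with $0,1 \in F$, so the convex hull of $F$ is exactly $[0,1]$. Since $F$ is self-similar and not a singleton, it is the attractor of an IFS $\{f_i\}_{i=1}^k$ of contracting similarities with $k \geq 2$, and the key structural fact I would exploit is that $F$ is \emph{uniformly perfect}, hence $\Lower F > 0$; this is mentioned in the excerpt and follows from self-similarity.

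Given $\Lower F > 0$, Theorem \ref{main2} applies directly and gives $\lim_{n\to\infty} \Lower nF = 1$, and since $F$ is closed (self-similar sets are compact) and $nF$ is closed, also $\Haus nF \to 1$. However, Theorem \ref{main2} only gives dimension approaching $1$, not an actual interval in \emph{finite time}, which is the stronger assertion here. So the real content is the ``finite time'' upgrade. For this I would argue more concretely using the $\delta$-density idea from the proposition on $\{1/k\}$ above. Since $F$ is self-similar with convex hull $[0,1]$, there is a scale $\delta_0 > 0$ and a gap structure: writing the level-$N$ cylinders $f_{\mathbf{i}}([0,1])$, the set $F$ meets $[0,1]$ with maximal gap length $g_N \to 0$ as $N \to \infty$, and the cylinders have comparable lengths (bounded ratio, by self-similarity with a fixed finite IFS). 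The crucial geometric input is: if $F$ is $\delta$-dense in $[0,1]$ after rescaling a cylinder, i.e. the ``local gap ratio'' is small, then $F+F$ fills in the gaps. Concretely, if the largest gap of $F$ in $[0,1]$ has length $g$ and the cylinders at the relevant level have length $\ell$ with $g < \ell$, then translating one cylinder's copy of $F$ by points of another cylinder covers an interval; more carefully, $F+F$ restricted to an appropriate subinterval has maximal gap at most (roughly) $g^2/\ell$ or $g \cdot (g/\ell)$, i.e. the gap-to-scale ratio gets squared. Iterating, after $n$ steps the gap-to-scale ratio is at most $(g_0/\ell_0)^{2^n}$ on a fixed interval; since self-similarity lets us choose the initial level so that $g_0/\ell_0 < 1$ (this is exactly the statement that $F$ is not ``too sparse at every scale'', which holds because the convex hull is $[0,1]$ and the IFS is finite), for $n$ large enough the ratio drops below any threshold — but we need it to hit $0$, i.e. an actual interval. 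To get an honest interval rather than just vanishing gap ratio, I would instead track it as: $nF$ contains an interval once $n$ is large enough that $g_{N}/\ell_N \cdot (\text{something})^{n}$ forces overlap, using that $nF \supseteq$ (sum of $n$ copies of the rescaled cylinder set) which by the density-doubling contains an interval of definite length.

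The cleanest route, which I would actually write up, is this: rescale so the convex hull of $F$ is $[0,1]$; pick a level $N$ so large that every level-$N$ cylinder $f_{\mathbf i}([0,1])$ has length $\ell$ with $\ell > g$ where $g$ is the maximal gap of $F$ in $[0,1]$ (possible since $g$ is fixed but we can also use that within $[0,1]$ there are at least two cylinders whose union already has a controlled gap; more simply, use that $F$ being uniformly perfect gives a uniform lower bound on the ratio of consecutive cylinder lengths to gaps — I would phrase the needed inequality and note it holds by finiteness of the IFS and $\text{conv}(F)=[0,1]$). Then show $F + F \supseteq [0, c]$-style statement is too strong; rather show the maximal gap of $(2F) \cap [a,b]$ on some fixed subinterval is at most $g' < g$ with $g'/g$ bounded by a fixed factor $<1$ \emph{per iteration that also rescales}. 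The honest mechanism: self-similarity means $F \supseteq \bigcup_i f_i(F)$, so $2F \supseteq f_i(F) + f_j(F)$; choosing $i,j$ with $f_i,f_j$ having ratios $r_i \approx r_j$ and well-separated images, $f_i(F)+f_j(F)$ is a scaled copy of $r_i F + (\text{translate of }r_j F)$, which for $r_i = r_j$ is $r_i(F+F)$ up to translation. So inductively $2^m F \supseteq r^{m}(2^m$-fold structure$)$ and the gap ratio squares each time we double. Hence for $n = 2^m$ with $m$ large the gap ratio of $nF$ in its convex hull is $\leq (g_0)^{2^m} \to 0$; combined with the fact (from Theorem \ref{main2} or a direct compactness/limit argument) that the ``gaps cannot persist'' — specifically, a nested intersection argument shows that if the gap ratio tends to $0$ along $n$ then some $nF$ already contains an interval because $nF$ is closed and $(n+1)F \supseteq nF + F \supseteq nF + \{0, t\}$ for appropriate $t$, which genuinely fills gaps of length $< t$ — we conclude $nF \supseteq$ an interval for some finite $n$. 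Once $nF$ contains an interval, it has Hausdorff, box, and Assouad dimension $1$ since all these dimensions are monotone and an interval has dimension $1$ in all of them (and they are bounded above by $1$ for subsets of $\mathbb{R}$). \textbf{The main obstacle} is making the ``gap-squaring under sumset'' step rigorous with honest constants — in particular verifying that the self-similar structure really does let one write $f_i(F) + f_j(F)$ as (a translate/scaling of) $F+F$ or at least as something with a strictly smaller gap-to-diameter ratio, and then converting ``gap ratio $\to 0$'' into ``an actual interval appears at a finite stage'' rather than merely in the limit; the latter requires the observation that adding a single well-placed translate $\{0,t\}$ to a set already fills every gap shorter than $t$.
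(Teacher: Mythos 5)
Your proposal has a genuine gap at exactly the point you flag as ``the main obstacle'', and the mechanism you suggest for closing it does not work. The claim that adding a translate $\{0,t\}$ ``fills every gap shorter than $t$'' is false, and in fact backwards: if $(a,b)$ is a gap of $E$, the new points of $E\cup(E+t)$ inside $(a,b)$ are exactly $t$-translates of points of $E\cap(a-t,b-t)$, so a gap of length less than $t$ may receive no new points at all, while a gap longer than $t$ is merely split, leaving a residual gap of length about $\max\{t,\,b-a-t\}$. Consequently, iterating sumsets with two-point (or small finite) configurations drives the maximal gap down towards the separation $t$ of the configuration but never to zero, and ``gap ratio $\to 0$'' along $n$ does not, by itself, produce an actual interval at any \emph{finite} stage --- which is precisely the content of the proposition beyond Theorem \ref{main2}. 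A second, smaller gap: your gap-squaring step needs $f_i(F)+f_j(F)$ to be an exact rescaled translate of $F+F$, which requires two maps with \emph{equal} contraction ratios; a general IFS need not contain such a pair, and ``$r_i\approx r_j$'' is not enough for the identity you use.

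The paper's proof avoids both issues with a different and more algebraic observation. First, it manufactures the equal-ratio pair: taking two maps with distinct fixed points, iterating each an even number of times until their images are disjoint, and composing them in the two possible orders yields two orientation-preserving maps with the same ratio $r$ satisfying strong separation; by monotonicity of sumsets one may then assume $\Phi=\{x\mapsto rx,\ x\mapsto rx+(1-r)\}$. Second --- and this is the step your proposal is missing --- it notes that $k\,X(\Phi)=X(k\Phi)$, where $k\Phi$ is the IFS with the same ratio $r$ and translation set $kT=\{n(1-r):0\le n\le k\}$. Thus the iterated sumset is itself a self-similar attractor with more and more branches, and once $rk\ge 1-r$ the interval $[0,k]$ is invariant under $k\Phi$, so by uniqueness of the attractor $X(k\Phi)=[0,k]$. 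It is this fixed-point/invariance argument that converts ``pieces overlap'' into a genuine interval in finite time, replacing the gap-filling limit argument that your write-up cannot complete. If you want to salvage your route, you would need to replace the $\{0,t\}$ lemma with some such exact self-replication statement; as written, the proof does not go through.
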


This result obviously extends to sets containing non-singleton self-similar sets, which include (non-singleton) graph-directed self-similar sets, subsets of self-similar sets generated by irreducible subshifts of finite type, and many examples of $\times p$ invariant subsets of $S^1$.

\begin{proof}[Proof of Proposition \ref{IFSthm}]

Suppose $F \subseteq [0,1]$ is a self-similar set which is not a singleton.  Then it necessarily contains a self-similar set which is generated by an IFS consisting of two orientation preserving maps with the same contraction ratio and which satisfies the strong separation condition.  To see this, choose two maps with distinct fixed points and iterate each an even number of times until the images of some large interval under the two iterated maps are disjoint.  Composing these two maps with each other in the two possible orders yields an IFS with the desired properties.  We may also renormalise so that the maps fix 0 and 1 respectively.  Since sumsets are monotone in the sense that $E \subseteq F \Rightarrow nE \subseteq nF$ for all $n$, it suffices to prove the result  for self-similar sets generated by IFSs $\Phi = \{\phi_1, \phi_2\}$ where $\phi_1, \phi_2: [0,1] \to [0,1]$ are defined by $ \phi_1(x)= r x$ and $\phi_2(x)=rx + (1-r)$ where $r \in (0,1/2)$ is a common contraction ratio. We write $X(\Phi)$ for the attractor of $\Phi$ and $k \Phi$ to denote the IFS with  common contraction ratio $r$  but with  translations taking all values in the iterated sumset $kT$ where $T = \{0, 1-r\}$ is the set of translations associated with $\Phi$.  We also write $X(k\Phi)$ for the attractor of this IFS and observe that for any integer $k$, $k X(\Phi) = X(k \Phi)$.

Note that $\text{diam}(X(k\Phi))= k$ and so
\[
\text{diam}(\phi_i(X(k\Phi)))=r k
\]
where $\phi_i$ is any map in $k\Phi$.  The set of translations defining $k\Phi$ is 
\[
k \left\{0,(1-r) \right\}= \left\{ n(1-r) \colon n=0,1,\ldots, k\right\}
\]
and therefore for all distinct $\phi_i, \phi_j \in k\Phi$ we see that $\lvert\phi_i(0)-\phi_j(0)\rvert \geq 1-r$ (independent of $k$). Thus there are $k+1$ maps in  $k\Phi$, and the IFS satisfies the strong separation condition as long as $rk  < 1-r$.  However,  for  $k\geq (1-r)/r$, the interval $[0,k]$ is invariant under $k\Phi$ which implies $X(k\Phi) =[0,k]$  completing the proof.
\end{proof}

\section{Acknowledgement}

Some of this work was completed while the authors were  resident at the  Institut Mittag-Leffler during the semester programme \emph{Fractal Geometry and Dynamics} and they are grateful for the inspiring atmosphere and financial support.   The authors thank Xiong Jin, Tuomas Sahlsten,  Pablo Shmerkin, Meng Wu,  and Josh Zahl  for helpful remarks and also the participants of the 2017 St Andrews reading group on additive combinatorics which stimulated some of this work.  They also thank an anonymous referee for carefully reading the paper and making several helpful suggestions.

\providecommand{\bysame}{\leavevmode\hbox to3em{\hrulefill}\thinspace}
\providecommand{\MR}{\relax\ifhmode\unskip\space\fi MR }
\providecommand{\MRhref}[2]{%
	\href{http://www.ams.org/mathscinet-getitem?mr=#1}{#2}
}
\providecommand{\href}[2]{#2}

\end{document}